\newtheorem{theorem}{Theorem}[section]
\newtheorem{lemma}[theorem]{Lemma}
\newtheorem{proposition}[theorem]{Proposition}
\numberwithin{equation}{section}
\theoremstyle{remark}
\newtheorem{remark}{Remark}[section]
\newcommand{\R}{{\mathbb{R}}}
\newcommand{\T}{{\mathbb{T}}}
\def\leq{\leqslant}
\def\geq{\geqslant}
\def\om{\omega}
\def\th{\theta}
\def\na{\nabla}
\def\De{\Delta}
\def\pa{\partial}
\def\pat{\partial_t}
\def\lan{\langle}
\def\ran{\rangle}
\def\re{\operatorname{Re}}
\begin{document}
\title[The optimal transition threshold of the 2D Couette flow]{The optimal transition threshold for the 2D Couette flow in the infinite channel}

\author[]{Qionglei Chen}
\address[]{Institute of Applied Physics and Computational Mathematics, 100088 Beijing, China}
\email{chen\_qionglei@iapcm.ac.cn}

\author[]{Zhen Li}

\address[]{School of Mathematical Sciences, Key Laboratory of Mathematics and Complex Systems, Ministry of Education, Beijing Normal University, 100875 Beijing, China}
\email{lizhen@bnu.edu.cn}

\author[]{Changxing Miao}
\address[]{Institute of Applied Physics and Computational Mathematics, 100088 Beijing, China}
\email{miao\_changxing@iapcm.ac.cn}

\keywords{Couette flow, infinite channel, optimal transition threshold}

\begin{abstract}
We investigate the stability of the 2-D Navier-Stokes equations in the infinite channel $\R\times [-1,1]$ with the Navier-slip boundary condition. We show that if the initial perturbations $\om^{in}$ around the Couette flow satisfy $\|\om^{in}\|_{H^3_{x,y}\cap L^1_x H^3_y}\leq c\nu^{\frac13}$, the solution admits enhanced dissipation at $x$-frequencies $|k|\gg \nu$ and inviscid damping effect. The key contributions lie in two parts: (1) we adopt the new decomposition of the vorticity $\om=\om_{L}+\om_e$, where $\om_L$ effectively captures a ``weak" enhanced dissipation $(1+\nu^{\frac13} t)^{-\frac14}e^{-\nu t}$ and the corresponding velocity exhibits the inviscid damping effect; (2) we introduce the dyadic decomposition for the long time scale $t\geq \nu^{-\frac16}$ and apply the ``infinite superposition principle" to the equation for $\om_e$ in order to control the growth induced by echo cascades, which appears to be novel and may hold independent significance.
\end{abstract}

\maketitle
\section{Introduction}\label{sec:Introduction}
We consider the incompressible Navier-Stokes equations in the infinite channel $\Omega=\R\times [-1,1]$:
\begin{equation}\label{NS}
	\left\{
    \begin{aligned}
    &\pat V + (V\cdot\na)V -\nu\De V+ \na P =0, \\
    &\na\cdot V = 0,\\
    & V(0,x,y) = V^{in}(x,y),
    \end{aligned}
	\right.
	\end{equation}
where $V(t,x,y)$ is the velocity field, $P$ is the corresponding pressure, and $\nu>0$ is the viscosity.

The hydrodynamic stability of the Navier-Stokes equation is a long-standing issue.
To resolve the Sommerfeld paradox \cite{Drazin-Reid}, Trefethen et al. \cite{Trefethen} initially reformulated the problem by studying the transition threshold. Bedrossian-Germain-Masmoudi \cite{Bedrossian-Germain-Masmoudi-4} presented a mathematical version:
Find a norm $\|\cdot\|_{X}$ and determine a $\gamma=\gamma(X)$ such that
\begin{align*}
&\|u^{in}\|_{X}\leq \nu^{\gamma}\Longrightarrow \text{ stability},\\
&\|u^{in}\|_{X}\gg \nu^{\gamma}\Longrightarrow \text{ instability}.
\end{align*}
The exponent $\gamma$ is referred to as the transition threshold.  Concerning the quantitative stability of shear flows, one can refer to \cite{Li-Wei-Zhang1,Wei-Zhang-Zhao-2,Chen-Wei-Zhang,Chen-Li-Wei-Zhang,Chen-Wei-Zhang2,Ding-Lin,Chen-Ding-Lin-Zhang, Zelati-Elgindi-Widmayer,Chen-Wei-Zhang1, Wei-Zhang-Zhao,Zelati-Gallay,Zelati,Deng-Wu-Zhang,Lin-Wu-Zhu,Ionescu-Jia,Ionescu-Jia1,Ionescu-Jia2,Ionescu-Jia3} and references therein.

The stability problem of the Couette flow in the infinite channel $\R\times[-1,1]\times\R$ has attracted the interest of many physicists. However, there has been little relatively mathematical research due to the complexity of analyzing the long-wave effect compared with the finite channel. For related studies in other domains, one can refer to \cite{Bedrossian-Germain-Masmoudi-1,Bedrossian-Germain-Masmoudi-2,Bedrossian-Germain-Masmoudi-3,Chen-Wei-Zhang2,Wei-Zhang-1} and references therein. To provide insights into the stability of the 3D Couette flow, numerous studies have focused on the 2D Couette flow, which serves as a simplified model of the 3D flow. These investigations help clarify the fundamental stability mechanisms and provide a cornerstone for understanding the 3D case. Notably, a series of stability results \cite{Bedrossian-Germain-Masmoudi-4,Bedrossian-He-Iyer-Wang,Bedrossian-He-Iyer-Wang1,Bedrossian-Masmoudi-Vicol,Bedrossian-Wang-Vicol,Masmoudi-Zhao} have been achieved for the 2D Couette flow in different domains and perturbation classes.

We present several current optimal transition threshold results for the Couette flow in different regions. In the domain $\T\times\R$, Masmoudi-Zhao \cite{Masmoudi-Zhao} and Wei-Zhang \cite{Wei-Zhang-2} obtained the optimal threshold $\gamma=\frac13$. For $\T\times [-1,1]$ with the non-slip boundary condition, Chen-Li-Wei-Zhang \cite{Chen-Li-Wei-Zhang} derived $\gamma\leq \frac12$. For the same domain $\T\times [-1,1]$ with the Navier-slip boundary condition, Wei-Zhang \cite{Wei-Zhang-3} recently proved the optimal threshold $\gamma= \frac13$ using the quasi-linear method. Arbon-Bedrossian \cite{Arbon-Bedrossian} initially studied the stability problem in unbounded domains such as $\R\times \R$,  $\R\times [0,\infty)$ and $\R\times [-1,1]$ with Navier-slip boundary conditions, showing the threshold $\gamma\leq \frac12+$. Very recently, Li-Liu-Zhao \cite{Li-Liu-Zhao} improved the threshold to $\frac13+$ in $\R\times \R$.

For the domain $\R\times [-1,1]$, it is natural to ask whether the optimal transition threshold $\frac13$ can be achieved. In this paper, we answer this question positively and obtain enhanced dissipation and inviscid damping.

Let $u=V-U$ be the perturbation of the velocity, which satisfies
\begin{equation}\label{pertu}
\left\{
\begin{aligned}
&\pat u+u\cdot\na u-\nu \De u+(u^{(2)},0)+y\pa_xu+\na p=0,\\
&\na\cdot u=0, \quad u^{(2)}(t,x,\pm1)=0,\quad \pa_y u^{(1)}(t,x,\pm1)=0,\\
&u(0,x,y)=u^{in}(x,y).
\end{aligned}
\right.
\end{equation}
The corresponding vorticity $\om=\pa_y u^{(1)}-\pa_x u^{(2)}$ can be written as
\begin{equation}\label{om}
\left\{
\begin{aligned}
&\pat\om+u\cdot\na \om-\nu\De \om+y\pa_x\om=0,\\
&\om(t,x,\pm1)=0,\quad \om\mid_{t=0}=\om^{in}(x,y).
\end{aligned}
\right.
\end{equation}

We state the main result of the paper.
\begin{theorem}\label{Th1}
Assume that $\om^{in}\in H^3_{x,y}\cap L^1_x H^3_y$ and $0<\nu< 1$. There exists a small constant $c>0$, independent of $\nu$, such that if
\begin{align*}
E_0:=\|\om^{in}\|_{H^3_{x,y}}+\|\om^{in}\|_{L^1_x H^3_y}
\leq c\nu^{\frac13},
\end{align*}
 the solution to the system \eqref{om} is global in time.

Moreover, let $\mathcal{M}=\mathcal{M}(D_x)$ and $\mathcal{M}_1(D_x)$ be the Fourier multipliers defined as
\begin{align}\label{oper M}
\mathcal{M}(k)=|k|^{\frac23}\chi+1-\chi, \quad \mathcal{M}_1 (k)=|k|^{\frac12}+|k|,
\end{align}
where $\chi$ is a cut-off function with $\chi(k)=1$ if $|k|\leq \frac12$ and $\chi(k)=0$ if $|k|\geq 1$.
There exist constants $ C>0$ and $\theta\in[0,\frac{1}{16})$ independent of $\nu$, such that the solution $\om$ satisfies the following global stability estimate:
\begin{align}\label{Th: esti om}
\|(1+\nu^{\frac13}t\mathcal{M} )^{\theta}\om\|_{ L^2_{x,y}}\leq &CE_0,
\end{align}
and inviscid damping estimate:
\begin{align}\label{inviscid}
\|(1+\nu^{\frac13} t\mathcal{M})^{\theta}\mathcal{M}_1 u\|_{L^2_tL^2_{x,y}}\leq C E_0.
\end{align}
\end{theorem}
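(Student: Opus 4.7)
The plan is to run a bootstrap argument on the weighted energy quantities appearing in \eqref{Th: esti om}--\eqref{inviscid}, organized around the decomposition $\om=\om_L+\om_e$ announced in the introduction. After Fourier transforming in $x$, the Couette transport $y\pa_x$ becomes the multiplier $iky$ and one is reduced to a family of 1D parabolic problems on $[-1,1]$ with Dirichlet data, indexed by the frequency $k$. Three frequency regimes must be tracked separately: the high regime $|k|\gtrsim 1$ admits the classical hypocoercive enhanced dissipation at rate $\nu^{1/3}|k|^{2/3}$; the ultra-low regime $|k|\ll\nu$ is essentially heat-like; and the intermediate regime $\nu\ll|k|\ll 1$ requires the ``weak enhanced dissipation'' $(1+\nu^{1/3}t)^{-1/4}e^{-\nu t}$ advertised in the abstract. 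The multiplier $\mathcal{M}$ is designed so that $(1+\nu^{1/3}t\mathcal{M})^{\theta}$ interpolates through all three, while the $L^1_x$ control of the initial data supplies the extra low-frequency integrability that no pure $L^2$-norm provides.

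For the linear piece I would set
\begin{equation*}
\pat\om_L+y\pa_x\om_L-\nu\De\om_L=0,\qquad \om_L|_{y=\pm 1}=0,\qquad \om_L|_{t=0}=\om^{in},
\end{equation*}
and run a hypocoercive energy estimate in the $k$-Fourier variable after multiplying by a Wei--Zhang-type Lyapunov functional involving $|\hat\om_L|^2$, $|\pa_y\hat\om_L|^2$ and an antisymmetric cross term, adapted to the Navier-slip boundary. Interpolating the resulting $L^2_y$ decay against the $L^1_x H^3_y$ norm via Hausdorff--Young in $k$ supplies the weak enhanced dissipation claimed. The inviscid-damping estimate on the corresponding velocity is then obtained by writing $u_L^{(2)}=-\pa_x\De^{-1}\om_L$ and $u_L^{(1)}=\pa_y\De^{-1}\om_L$, noting that $\mathcal{M}_1\De^{-1}$ gains $|k|^{-1/2}$ or $|k|^{-1}$ according to the regime, and integrating the decay in time.

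The remainder $\om_e:=\om-\om_L$ solves $\pat\om_e+y\pa_x\om_e-\nu\De\om_e=-u\cdot\na\om$ with zero initial and boundary data, and here the principal obstacle is the nonlinear echo cascade: transfer of $L^2$ mass between neighbouring $x$-frequencies accumulates over time and, without care, produces a logarithmic loss per echo that eventually overwhelms any polynomially growing weight. My plan is to cut the time axis dyadically into $I_j=[2^j\nu^{-1/6},2^{j+1}\nu^{-1/6}]$ for $j\geq 0$, split $u=u_L+u_e$ and $\om=\om_L+\om_e$ on each window, and bound the four resulting interaction terms using the inviscid damping of $u_L$ to absorb the $\mathcal{M}_1$ factor and the weak enhanced dissipation of $\om_L$ to absorb the $\mathcal{M}$ factor. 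The truly nonlinear piece $u_e\cdot\na\om_e$ is handled by the ``infinite superposition principle'': instead of iterating a one-step Gronwall estimate across the windows and paying a geometric factor per echo, one treats the full sum over $j$ as a single resolvent equation and estimates it in a weighted $\ell^2_j$-norm, so that the echo losses are summed once rather than multiplied. Aligning the three $k$-regimes inside this step is the delicate point, and the restriction $\theta<1/16$ is exactly what is needed for the geometric series in $j$ to remain summable once all echo losses are accounted for. Under $E_0\leq c\nu^{1/3}$ with $c$ small, the worst nonlinear contribution scales like $\nu^{-1/3}\|\om\|^2\lesssim c^2\nu^{1/3}\ll E_0$, which closes the bootstrap and, combined with a standard continuation argument, yields the estimates of Theorem \ref{Th1}.
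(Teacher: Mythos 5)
Your overall architecture (split $\om=\om_L+\om_e$, dyadic decomposition in time past $\nu^{-1/6}$, superposition over the time windows, bootstrap with $E_0\le c\nu^{1/3}$) matches the paper, but there are two genuine gaps. First, you take $\om_L$ to solve the \emph{true} linearized equation $\pa_t\om_L+y\pa_x\om_L-\nu\De\om_L=0$ and propose to extract its decay by hypocoercivity. The paper instead defines $\om_L$ by the \emph{modified} equation \eqref{equ: omL}, $\pa_t\om_L+\nu(1-t^2\pa_x^2)\om_L+y\pa_x\om_L=0$, precisely because in the moving frame this has the explicit solution $e^{-\nu t-\frac13\nu k^2t^3}\hat\om^{in}(k,y)$. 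That explicit formula is what delivers (i) the $L^\infty_k$ use of the $L^1_x$ data to get the $(1+\nu t^3)^{-1/4}e^{-\nu t}$ rate, (ii) the boundedness (no growth in $t$) of $(\pa_y+t\pa_x)\om_L$ in $L^\infty$, and (iii) the pointwise-in-$k$ inviscid damping rates $(1+t)^{-1}$ and $(1+t)^{-2}\ln(1+t)$ for $u_L$ in Proposition \ref{Pro: linear}. A hypocoercive Lyapunov functional for the true semigroup gives $L^2$ decay of $\om_L$ but not these finer $L^\infty$ and velocity estimates in the bounded channel; without them the estimate of the coupling terms in the $\om_e$ equation does not go through. (The price of the modification, the defect $E_r=E_{r_L}+u_L\cdot\na\om_L$, is shown to be integrable in time at order $\nu^{1/3}E_0$, which is exactly affordable.)

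Second, and more seriously, you apply the dyadic/superposition machinery to the quadratic term $u_e\cdot\na\om_e$. In the paper that term is harmless: it is bounded directly by product estimates in $X_\theta$ at cost $\nu^{-5/8}\|\om_e\|_{X_\theta}^2$, which is quadratically small. The term that actually forces the dyadic decomposition and the ``infinite superposition principle'' is the \emph{linear-in-$\om_e$} reaction term hidden in $u_e^{(2)}\pa_y\om_L$: since $\pa_y\om_L=(\pa_y+t\pa_x)\om_L-t\pa_x\om_L$ and the first piece is bounded while $t\pa_x\om_L$ grows, one must isolate $tu_e^{(2)}\pa_x\om_L$ into its own equation \eqref{equ: om2 sec1} for $\om_2$, decompose $\om_2=\sum_j\om_{2,j}$ with forcing supported on $(T_{j-1},T_j]$, and estimate each piece via the non-resonance Lemma \ref{lemma: om decom} (the $|k-l|^{-1}\langle k-l\rangle^{-1}$ gain between the input frequency $l$ of $\om_L$ and the output frequency $k$), which produces the summable factor $2^{-j/4}$. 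Your proposal never isolates this secularly growing reaction term, so a direct Gronwall or product estimate on $u_e\cdot\na\om_L$ costs a factor of $t$ that destroys the closure of the bootstrap at threshold $\nu^{1/3}$. Relatedly, the constraint $\theta<\frac{1}{16}$ does not come from summability of the geometric series in $j$ (that only needs the auxiliary exponent $\theta_1<\frac14$); it comes from the bookkeeping of the weight $(1+\nu^{1/3}T_{j_0}\mathcal{M})^\theta$ when passing the short-time estimate across $T_{j_0}=\nu^{-2/3-\epsilon}$ in the final bootstrap.
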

\begin{remark}
If the initial data satisfy $\|\om^{in}\|_{H^3_{x,y}}+\|\lan \frac{1}{D_x}\ran^{s} \om^{in}\|_{L^2_{x,y}}\leq c\nu^{\frac13}$ with $s>\frac12$, then the equation \eqref{om} has the global solution that verifies \eqref{Th: esti om} and \eqref{inviscid}. This initial data condition implicitly imposes a smallness requirement on the low-frequency Fourier modes in the $x$-direction, i.e., for $|k|\rightarrow 0$,
$$|\widehat{\om^{in}}(k)|\approx O(|k|^{s-\varepsilon}),\quad \text{with } \varepsilon<1/2.$$
In contrast, for $\om^{in}\in H^3_{x,y}\cap L^1_x H^3_y$, the Fourier transform $\widehat{\om^{in}}$ is only bounded without additional low-frequency decay.
\end{remark}
\begin{remark}
We guess that the optimal transition threshold on the domain $\R\times \R^{+}$ can be obtained by applying a similar method.
\end{remark}
\begin{remark}
For shear flow $U(y)$ near Couette with $\|U(y)-y\|_{C^3}\leq c\nu^{\frac13}$, the same transition threshold and inviscid damping estimates as in Theorem \ref{Th1} remain valid.
\end{remark}
Let us briefly point out the new innovations:
\begin{itemize}
\item It is known that the presence of enhanced dissipation and inviscid damping essentially contributes to the transition threshold. However, the enhanced dissipation rate $e^{-\nu k^2 t^3}$ and the inviscid damping effect fail as the frequency $k\rightarrow 0$. This behavior poses an obstacle to obtain the optimal threshold on $\R\times [-1,1]$ by directly using the space-time estimates  and the energy method.
    To overcome this difficulty, motivated by \cite{Wei-Zhang-3}, we employ the quasi-linear method to decompose $\om=\om_{L}+\om_{e}$, where
\begin{itemize}
  \item The solution $\om_L$ to \eqref{equ: omL} exhibits ``weak" enhanced dissipation rate $(1+\nu t^3)^{-1/4}e^{-\nu t}$ and the velocity $u_{L}$ has the inviscid damping $(1+t)^{-1}$ in $\R\times[-1,1]$, which plays a crucial role in obtaining the optimal threshold.
   \item We re-decompose the error term $\om_e$ into $\om_e=\om_{1}+\om_{2}$ and introduce a novel space-time weighted space $X_{\th}$ to close the desired energy estimates. Here, we employ two frequency-dependent multipliers $\mathcal{M}$, $\mathcal{M}_1$ to better capture the solution's properties across distinct frequencies.
    To obtain the optimal space-time estimates of $\omega_2$,  we perform the dyadic decomposition $ \cup_{j\geq 1}(T_{j-1},T_j]$ with $T_0=\nu^{\frac16}$ and $T_j=2^{j}\nu^{-\frac13}$ for the long time scale $t> \nu^{-\frac16}$.
     Lastly, we apply the ``infinite superposition principle" to the equation of $\om_e$ in order to control the growth induced by echo cascades.
    \end{itemize}
\end{itemize}

Now, we specifically clarify the ideas. First, by invoking a ``generalized superposition principle",  we decompose $\om=\om_{L}+\om_{e}$, where
\begin{equation}\label{equ: omL}
\left\{
\begin{aligned}
&\pa_t\om_{L}+\nu(1-t^2\pa^2_x)\om_{L}+y\pa_x\om_{L}=0,\\
&\om_{L}\big|_{t=0}=\om^{in}(x,y),\quad \om_{L}(\pm1)=0,
\end{aligned}
\right.
\end{equation}
and
\begin{equation}\label{equ: ome}
\left\{
\begin{aligned}
&\pat\om_{e}-\nu \De\om_{e}+y\pa_x\om_e+u\cdot\na\om_{e}+u_{e}\cdot\na\om_{L}+E_r=0,\\
&u_{e}=(\pa_y,-\pa_x)\phi_e, \,\phi_e=\De^{-1}\om_e,\\
&\om_{e}(t,x,\pm1)=0,\, \om_{e}\big|_{t=0}=0,
\end{aligned}
\right.
\end{equation}
with
\begin{align*}
E_{r}=E_{r_{L}}+u_{L}\cdot{\na}\om_{L}, \quad E_{r_{L}}=\pa_t \om_{L}-\nu\De \om_{L}+y\pa_{x}\om_{L},\quad u_{L}=(\pa_y,-\pa_x)\De^{-1}\om_{L}.
\end{align*}

We analyze the construction of $\om_L$. The Couette flow is known to exhibit an exponential decay rate of $e^{-c \nu k^2 t^3}$, which can be captured naturally by the evolution equation
$$\pa_t f-\nu t^2 k^2 f=0.$$
However, this decay vanishes as $|k|\rightarrow 0$. Notably, for $|k|\leq \nu$ and $y\in[-1,1]$, the solution to
\begin{align*}
\pa_tf-\nu \pa^2_yf=0,
\end{align*}
with homogeneous boundary conditions decays as $e^{-\nu t}$. Motivated by these observations, we analyze the modified equation
\begin{align*}
\pa_t f+\nu(1+t^2k^2)f=0,
\end{align*}
whose solution simultaneously exhibits the decay properties at both high and low frequencies.
Moreover, the Couette flow satisfies the key identity
$$\pa_t[f(t,x-ty,y)]=[(\pa_t -y\pa_x)f](t,x-ty,y).$$
Based on these insights, we construct the linear equation of $\om_{L}$ (see \eqref{equ: omL}), which effectively describes the evolution of the initial data $\om^{in}$ while exhibiting enhanced dissipation and inviscid damping.

We now return to the equation of $\om_{L}$. Let $\widetilde{\om}_{L}(t,x,y)=\om_L(t,x+ty,y)$. we reformulate the problem as follows: \begin{equation*}
\left\{
\begin{aligned}
&(\pa_t+\nu(1-t^2\pa^2_x))\widetilde{\om}_{L}=0, \\
&\widetilde{\om}_{L}\big|_{t=0}=\om^{in}(x,y), \quad \widetilde{\om}_{L}(t,x\mp t,\pm1)=0.
\end{aligned}
\right.
\end{equation*}
whose solution possesses an explicit form
$$\mathcal{F}(\widetilde{\om}_{L})(k)=e^{-\nu t-\frac13\nu k^2 t^3}\hat{\om}^{in}(k,y).$$
The $L^p_k$ $(1\leq p<\infty)$ norm of $e^{-\frac13\nu k^2 t^3}$, which exhibits polynomial decay at low frequencies, prompts us to introduce $L^1_x$ as the initial data space. Moreover, we show that the solution $\om_L$ reveals additional polynomial decay $(1+\nu t^3)^{-\frac14}$ for $t\gg \nu^{-\frac13}$. The rate $e^{-\nu t}$ plays an important role at low frequencies to obtain the polynomial decay of $E_r$.

On the other hand, it remains challenging to obtain the estimates of $\om_e$ without explicit expression. For $t\leq \nu^{-\frac16}$, we use the energy method to derive the space-time estimate of $\om_e$. For $t\geq \nu^{-\frac16}$, the main difficulty lies in the polynomial growth of $\pa_y \om_{L}$. Motivated by \cite{Wei-Zhang-3}, we take advantage of the good property of $\om_L$ to decompose $\om_{e}=\om_{1}+\om_{2}$ such that
\begin{equation*}
\left\{
\begin{aligned}
&\pa_t\om_{1}-\nu\De\om_{1}+y\pa_x\om_{1}+u\cdot\na\om_e+u^{(1)}_e\pa_x\om_{L}+u^{(2)}_e(\pa_y+t\pa_x)\om_{L}+E_r=0, \\
&\om_{1}(t,x,\pm1)=0, \quad \om_{1}|_{t=T_0}=\om_{e}|_{t=T_0},
\end{aligned}
\right.
\end{equation*}
and
\begin{equation}\label{equ: om2 sec1}
\left\{
\begin{aligned}
&\pa_t\om_{2}-\nu\De\om_{2}+y\pa_x\om_{2}-tu^{(2)}_e\pa_x\om_{L}=0, \\
&\om_{2}(t,x,\pm1)=0, \quad \om_{2}|_{t=T_0}=0.
\end{aligned}
\right.
\end{equation}
Let us mention that the choice of working space and multipliers is adapted to fully match the decay properties of $\om_L$. More precisely, we introduce the spaces $X$ and $X_{\theta}$ equipped with
\begin{equation}\label{f tildeXnorm}
\begin{aligned}
\|f\|_{X}=&\|f\|_{L^\infty L^2}+\nu^{\frac12}\|\na f\|_{L^2 L^2}+\|\mathcal{M}_1\na \De^{-1}f\|_{L^2 L^2}+\nu^{\frac12}\|\pa_x f\|_{L^1 L^2},
\end{aligned}
\end{equation}
and
\begin{equation}\label{fXk}
\begin{aligned}
\|f\|_{X_{\theta}}=&\|(1+\nu^{\frac13}t\mathcal{M})^{\theta}f\|_{X},
\end{aligned}
\end{equation}
where $L^p L^q$ denotes $L^p((t_0,t_1);L^q_{x,y}(\R\times [-1,1]))$, and $\mathcal{M}$, $\mathcal{M}_1$ are defined as \eqref{oper M}. We point out that the distinct expression of $\mathcal{M}_1$ at high and low frequencies is crucial to obtain the inviscid damping estimates.

The space-time estimates for $\om_1$ can be derived from the decay estimate of $\om_L$.
As for $\om_2$, the equation \eqref{equ: om2 sec1} contains the reaction term that poses an obstacle to obtaining the optimal space-time estimate.
In fact, the decay rate $e^{-\nu k^2 t^3}$ of $\om_L$ approaches 1 and the $L^1_x$ initial data condition does not ensure the smallness of $\om_L$ when $|k|\rightarrow 0$. This leads to the failure of the reaction term to provide uniform control over integrability for $t\geq \nu^{-\frac{1}{6}}$.
To address this difficulty, we observe that the reaction term exhibits a temporal decay property at $T\approx 2^{j} \nu^{-\frac{1}{3}}$. This motivates the introduction of the dyadic decomposition for $t\geq \nu^{-\frac16}$. Specifically,
we define a cut-off function
\begin{equation}\label{def: chij}
\chi_j(t)=\left\{
\begin{aligned}
&1,\quad  t\in(T_{j-1}, T_j],\\
&0,\quad t\notin (T_{j-1}, T_j],
\end{aligned}
\right.
\qquad \text{with } T_0=\nu^{\frac16}, \, T_j=2^{j}\nu^{-\frac13}, \quad j\geq 1,
\end{equation}
and further decompose $\om_2=\sum_{j\geq 1}\om_{2,j}$, where
\begin{equation*}
\left\{
\begin{aligned}
&\pa_t \om_{2,j}-\nu\De\om_{2,j}+y\pa_x\om_{2,j}=tu^{(2)}_e\pa_x\om_{L}\chi_j, \quad t>T_{j-1}, \\
& \om_{2,j}(t,x,\pm1)=0, \quad \om_{2,j}|_{t=T_{j-1}}=0.
\end{aligned}
\right.
\end{equation*}
To gain a clearer understanding of $\om_{2,j}$, we shall present a brief analysis on its characteristics.
The evolution is divided into two time intervals: $(T_{j-1},T_{j}]$ and $(T_j,\infty]$. More precisely, $\om_{2,j}$ satisfies
\begin{equation*}
\left\{
\begin{aligned}
&\pa_t \om_{2,j}-\nu\De\om_{2,j}+y\pa_x\om_{2,j}=tu^{(2)}_e\pa_x\om_{L}, \quad T_{j-1}<t\leq T_j, \\
& \om_{2,j}(t,x,\pm1)=0, \quad \om_{2,j}|_{t=T_{j-1}}=0,
\end{aligned}
\right.
\end{equation*}
and
\begin{equation*}
\left\{
\begin{aligned}
&\pa_t \om_{2,j}-\nu\De\om_{2,j}+y\pa_x\om_{2,j}=0, \quad t>T_{j}, \\
& \om_{2,j}(t,x,\pm1)=0, \quad \om_{2,j}|_{t=T_{j}}=\om_{2,j}(T_j).
\end{aligned}
\right.
\end{equation*}
The following table displays the specific expression of $\om_{2}$.
\begin{center}
\begin{tabular}{|c|c|c|c|c|c|c|}
	\hline $T$ & $ (T_0, T_1]$ &  $ (T_1, T_2]$ &$(T_2,T_3]$ & \,$.\,.\,.$\, & $(T_{j-1},T_{j}]$  & \,.\,.\,.\,\\
   \hline  $\om_2$ & $ \om_{2,1}$&$ \om_{2,1}+\om_{2,2}$  &$\om_{2,1}+\om_{2,2}+\om_{2,3}$ &\,.\,.\,.&\, $ \sum_{1\leq i\leq j}\om_{2,i}$  &\,.\,.\,.\,\\
   \hline
\end{tabular}
\end{center}
Since the external force term vanishes in $(T_{j},\infty]$, the estimate of $\om_{2,j}$ on this interval is controlled by its initial value $\om_{2,j}(T_{j})$.
To provide a more intuitive understanding, we exhibit the following figure.

\setlength{\unitlength}{0.7cm}
\begin{picture}(6,4)(-6,-0.8)
\put(0,0){\vector(1,0){8}}
\put(8.2,-0.1){$T$}
\put(0,0){\vector(0,1){3}}
\put(-0.7,2.8){$\om_2$}
\qbezier(0.2,0)(0.45,1.5)(0.9,1.7)
\qbezier(0.9,1.7)(1.15,1.68)(1.7,0.95)
\qbezier(1.7,0.95)(1.9,0.75)(2.1,0.6)
\qbezier(2.1,0.6)(2.4,0.15)(8,0.1)
\put(0,-0.5){{\small$T_0$}}
\put(0.5,1.9){{\small$\omega_{2,1}$}}

\qbezier(0.9,0)(1.2,1.3)(1.8,1.5)
\qbezier(1.8,1.5)(2.1,1.45)(2.6,0.79)
\qbezier(2.6,0.79)(2.75,0.63)(3.1,0.5)
\qbezier(3.1,0.5)(3.8,0.15)(8,0.05)
\put(0.7,-0.5){{\small$T_1$}}
\put(1.6,1.7){{\small$\omega_{2,2}$}}

\qbezier(1.8,0)(2.3,1.2)(3.1,1.35)
\qbezier(3.1,1.35)(3.3,1.33)(3.8,0.75)
\qbezier(3.8,0.75)(3.9,0.6)(4.3,0.4)
\qbezier(4.3,0.4)(4.75,0.15)(8,0.05)
\put(1.6,-0.5){{\small$T_2$}}
\put(2.9,1.55){{\small$\omega_{2,3}$}}

\qbezier(3.1,0)(3.6,1.1)(4.4,1.2)
\qbezier(4.4,1.2)(4.7,1.15)(5.1,0.75)
\qbezier(5.1,0.75)(5.2,0.65)(5.6,0.4)
\qbezier(5.6,0.4)(6,0.15)(8,0.05)
\put(2.9,-0.5){{\small$T_3$}}
\put(4.2,1.4){{\small$\omega_{2,4}$}}

\qbezier(4.4,0)(4.9,0.8)(5.6,1)
\qbezier(5.6,1)(5.9,0.995)(6.5,0.55)
\qbezier(6.5,0.55)(6.6,0.45)(7,0.3)
\qbezier(7,0.3)(7.5,0.2)(8,0.15)
\put(4.2,-0.5){{\small$T_4$}}
\put(5.4,1.2){{\small$\omega_{2,5}$}}
\put(5.5,-0.35){\textbf{.}}
\put(6.3,-0.35){\textbf{.}}
\put(7.1,-0.35){\textbf{.}}
\end{picture}

\noindent Finally, the infinite superposition principle is applied to derive the space-time estimates for $\om_{2}$.

\vspace{0.2cm}

\noindent{\bf Notations.} Throughout this paper, we denote by $C$ a general constant independent of $\nu, k$ and  may vary from line to line.
We will use the following notations:
 $A\lesssim B $ stands for $ A\leq CB$,
 $\lan k\ran =\sqrt{1+|k|^2}$, $\hat{f}=\int_{\R}e^{-ikx} f(x)dx$ and
$$\|f\|_{L^q}=\|f\|_{L^q_{x,y}(\R\times (-1,1))},\quad \|f\|_{L^pL^q}=\|f\|_{L^p_tL^q_{x,y}(\R\times (-1,1))}, \quad \forall p,q\in[1,\infty].$$
\section{A priori estimates}
In this section, we give some space-time estimates for the solution to
\begin{equation}
\left\{
\begin{aligned}\label{equ: fg}
&\pa_t f-\nu \De f+y \pa_xf=g,\\
& f(t,x,\pm1)=0, \quad f|_{t=t_0}=f(t_0).
\end{aligned}
\right.
\end{equation}
In the following, we denote $f(t,k,y)=\int_{\R}e^{-ikx}f(t,x,y)dx$ and $g(t,k,y)=\int_{\R}e^{-ikx}g(t,x,y)dx$ for convenience.

For $\nu=0$ and $f(t_0)=0$, applying the Fourier transform to $x$-variable in \eqref{equ: fg} yields
\begin{equation}
\left\{
\begin{aligned}\label{equ:fky}
&\pa_tf(t,k,y)+ikyf(t,k,y)=g(t,k,y), \\
& f(t,k,\pm1)=0, \quad f(t_0)=0.
\end{aligned}
\right.
\end{equation}
\begin{lemma}\label{lemma: om decom}
Let $f(t,k,y)$ be the solution of  \eqref{equ:fky} for $t\in[t_0,t_1]$. Assume that $ k, l\in\R\setminus \{0\}$, $ k\neq l$, $e^{ilty}g\in L^2(t_0,t_1;H^1_0(-1,1))$ and $\psi=-(\pa^2_y-k^2)^{-1}f$. Then we have
\begin{equation}\label{esti: w+psi}
\|f\|^2_{L^\infty_t L^2_y}+(|k|+|k|^2)\|(\pa_y, k)\psi\|^2_{L^2_t L^2_y}\leq C
 | k-l|^{-1}\lan k-l \ran^{-1} \|(\pa_y, \lan k-l\ran )(e^{il yt}g)\|^2_{L^2_tL^2_y}.
\end{equation}
Moreover, for $s\in[0,\frac32)$, we have
\begin{equation}\label{esti: pay+kwL2L2}
\begin{aligned}
&\nu\|(1+ \nu^{\frac13} t)^{-s}(\pa_y, k)f\|^2_{L^2_tL^2_y}\\
\leq & C(1+\nu^{\frac13} (t_1-t_0))^{(3-2s)}\frac{|k-l|^2+|l|^2}{|k-l|\lan k-l\ran}\|(\pa_y, \lan k-l \ran)(e^{il yt}g)\|^2_{L^2_tL^2_y}.
\end{aligned}
\end{equation}
\end{lemma}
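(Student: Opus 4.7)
The plan is to perform the phase-shift substitution $h(t,y) := e^{ilty} f(t,y)$ with $G(t,y) := e^{ilty}g(t,y)$, which preserves the Dirichlet condition (since by hypothesis $G\in L^2(t_0,t_1;H^1_0(-1,1))$) and converts \eqref{equ:fky} into
\begin{align*}
\pa_t h + imy h = G, \qquad h(t_0)=0, \qquad h(t,\pm1)=0,
\end{align*}
with $m := k-l\neq 0$. Because $|e^{ilty}|=1$ pointwise, $\|f(t)\|_{L^2_y}=\|h(t)\|_{L^2_y}$, so the $L^\infty_t L^2_y$ half of \eqref{esti: w+psi} reduces to the same bound for $h$.

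For the $L^\infty_t L^2_y$ estimate on $h$, I would use the explicit representation
\begin{align*}
h(t,y)=\int_{t_0}^t e^{imy(s-t)}G(s,y)\,ds,
\end{align*}
expand $\|h(t)\|^2_{L^2_y}$ as a double integral in $(s,s')\in[t_0,t]^2$, and perform the $y$-integral first against the oscillation $e^{imy(s-s')}$. Using $G(\cdot,\pm 1)=0$ to kill the boundary terms, one integration by parts in $y$ yields
\begin{align*}
\bigl|\textstyle\int_{-1}^1 e^{imy(s-s')}G(s,y)\overline{G(s',y)}\,dy\bigr|
\lesssim \lan m(s-s')\ran^{-1}\bigl(\|\pa_y G(s)\|_{L^2_y}\|G(s')\|_{L^2_y}+\|G(s)\|_{L^2_y}\|\pa_y G(s')\|_{L^2_y}\bigr),
\end{align*}
together with the trivial $L^2_y$ bound. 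A Schur-type interpolation of these two estimates in $(s,s')$ then produces the factor $|m|^{-1}\lan m\ran^{-1}$ against $\|(\pa_y,\lan m\ran)G\|^2_{L^2_t L^2_y}$. For the weighted space-time term $(|k|+|k|^2)\|(\pa_y,k)\psi\|^2_{L^2_t L^2_y}$, I would test the original equation against $\bar\psi$ (with $-\De_k\psi=f$, $\De_k=\pa_y^2-k^2$) to get
\begin{align*}
\tfrac12\pa_t\|(\pa_y,k)\psi\|^2_{L^2_y}+\re\bigl[ik\textstyle\int_{-1}^1 y f\bar\psi\,dy\bigr]=\re\textstyle\int_{-1}^1 g\bar\psi\,dy,
\end{align*}
and combine this with the elliptic estimate $\|(\pa_y,k)\psi\|_{L^2_y}\leq \lan k\ran^{-1}\|f\|_{L^2_y}$. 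The commutator term on the left, after one IBP in $y$ and using the boundary condition on $\psi$, provides a time-integrated coercive contribution of size $|k|\|(\pa_y,k)\psi\|^2_{L^2_y}$; together with the Poincaré-type bound this accounts for the $(|k|+|k|^2)$ weight, and the RHS is absorbed using the $L^\infty_t L^2_y$ bound for $f$ together with Cauchy--Schwarz against $g=e^{-ilty}G$.

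For \eqref{esti: pay+kwL2L2}, I would exploit the identity $\pa_y f=e^{-ilty}(\pa_y h-ilth)$, giving
\begin{align*}
\|(\pa_y,k)f\|^2_{L^2_y}\lesssim \|(\pa_y,k)h\|^2_{L^2_y}+l^2 t^2\|h\|^2_{L^2_y}.
\end{align*}
An energy estimate on $\pa_t h+imy h=G$ controls $\|(\pa_y,k)h\|^2_{L^2_t L^2_y}$ with a prefactor of order $|m|^2/(|m|\lan m\ran)$ against $\|(\pa_y,\lan m\ran)G\|^2_{L^2_t L^2_y}$, while the first estimate handles $\|h\|^2_{L^\infty_t L^2_y}$. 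Multiplying through by $\nu(1+\nu^{1/3}t)^{-2s}$ and computing $\nu\int_{t_0}^{t_1}t^2(1+\nu^{1/3}t)^{-2s}\,dt\lesssim \nu^{-1}(1+\nu^{1/3}(t_1-t_0))^{3-2s}$ (requiring $s<3/2$) collects the stated growth factor and assembles the prefactor $(|k-l|^2+|l|^2)/(|k-l|\lan k-l\ran)$.

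The main obstacle I expect is extracting the sharp $1/|m|$ resonance factor without logarithmic loss: a single IBP in $y$ produces a kernel $\lan m(s-s')\rangle^{-1}$ whose $L^1$ norm over $[t_0,t_1]^2$ is only logarithmic, so the Schur interpolation between the trivial $L^2_y$ bound and the IBP bound must be done at precisely the right exponent to yield the integrable kernel. The $H^1_0$ boundary condition on $G$ is essential both for the IBP and for the coercive extraction from the commutator $ik\int yf\bar\psi\,dy$; a parallel delicate point in the viscous estimate is that the phase-generated growth $l^2 t^2$ must be absorbed by the weight $(1+\nu^{1/3}t)^{-2s}$, which is exactly what restricts the admissible range $s\in[0,3/2)$.
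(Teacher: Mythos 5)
Your overall skeleton (pass to the $m=k-l$ frame, extract the resonance factor $|k-l|^{-1}$, absorb the secular $t$-growth into the weight $(1+\nu^{1/3}t)^{-2s}$) matches the paper, and your treatment of \eqref{esti: pay+kwL2L2} is essentially the paper's Step 3. But there are two genuine gaps in the core estimate \eqref{esti: w+psi}. First, your route to the $L^\infty_t L^2_y$ bound cannot deliver the sharp factor $|k-l|^{-1}\lan k-l\ran^{-1}$: after one integration by parts the kernel $\lan m(s-s')\ran^{-1}$ has $\sup_s\int_{t_0}^{t_1}\lan m(s-s')\ran^{-1}ds'\sim |m|^{-1}\log(2+|m|(t_1-t_0))$, and interpolating between the exponent-$0$ (trivial) and exponent-$1$ (one IBP) bounds only produces kernels $\lan m(s-s')\ran^{-\th}$ with $\th\le 1$ -- never an integrable one. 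You flag this as "the main obstacle" but the resolution you propose does not exist within your framework; the resulting logarithm in $|m|(t_1-t_0)$ is fatal in the application (Lemma \ref{lemma: f1Y+paxf1Y} uses the lemma on intervals of length $2^j\nu^{-1/3}$ and sums over $j$). The paper avoids the double time integral entirely: it takes the Fourier transform in $y$, follows the characteristic $\zeta\mapsto\zeta-kt$, and applies Cauchy--Schwarz \emph{in $t$ pointwise in $\zeta$} with the weight $(1+|\zeta-kt+lt|^2)^{1/2}$, whose reciprocal squared integrates in $t$ to exactly $\pi/|k-l|$. The one available derivative thus becomes an $L^2_t$-integrable weight rather than an $L^1_t$ kernel, and no logarithm appears.

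Second, your claim that testing $\pa_t f+ikyf=g$ against $\bar\psi$ yields a coercive contribution $|k|\,\|(\pa_y,k)\psi\|^2_{L^2_y}$ from the transport term is incorrect. Writing $f=-(\pa_y^2-k^2)\psi$ and integrating by parts with $\psi(\pm1)=0$, one finds
\begin{align*}
\re\lan ikyf,\psi\ran=-k\,\im\!\int_{-1}^{1}\pa_y\psi\,\bar\psi\,dy,
\end{align*}
since the terms $ik\int y|\pa_y\psi|^2dy$ and $ik^3\int y|\psi|^2dy$ are purely imaginary. This quantity has no sign and is in no way bounded below by $|k|\,\|(\pa_y,k)\psi\|^2_{L^2_y}$; the inviscid-damping coercivity cannot be extracted from a plain energy identity with multiplier $\bar\psi$. (When the paper does want such a coercive identity, in Lemma \ref{lemma: inviscid damping}, it uses the singular multiplier $\mathfrak{J}_k[f]$ built from $G_k(y,y')/(y-y')$, precisely so that the transport term produces $-\tfrac{|k|^2}{4}\|(\pa_y,k)\psi\|^2$.) In Lemma \ref{lemma: om decom} itself the paper instead reuses the moving-frame quantity $F(\zeta)=\sup_t|\widetilde f(t,\zeta-kt)|$ from Step 1 and the elementary bound $\int_{t_0}^{t_1}\tfrac{|k|}{|\zeta-kt|^2+1}\,dt\le\pi$, which simultaneously gives the extra factor $|k|$ (or $|k|^2$ for $|k|\ge1$) \emph{and} the resonance factor $|k-l|^{-1}$ on the right-hand side -- something your proposal, which only invokes "Cauchy--Schwarz against $g$", does not produce for this term.
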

\begin{proof}
We first prove the case for $|k-l|\leq 1$, which will be divided into three steps.

\noindent\textbf{Step 1.} The estimate of $\|f\|_{L^\infty_t L^2_y}$.

Thanks to $f(t,k,\pm1)=g(t,k,\pm1)=0$, here, we introduce the Fourier transform in $y$ as
\begin{align*}
\widetilde{f}(t,\zeta)=\frac{1}{\sqrt{2\pi}}\int^{1}_{-1}f(t,y)e^{-iy\zeta}dy,\quad \tilde{g}(t,\zeta)=\frac{1}{\sqrt{2\pi}}\int^{1}_{-1}g(t,y)e^{-iy\zeta}dy.
\end{align*}
By Plancherel's formula, we have $\|\widetilde{f}(t)\|_{L^2_\zeta(\R)}=\|f(t)\|_{L^2_y(\textrm{I})}$ and
\begin{equation}\label{equality; leq1}
\begin{aligned}
\|(\pa_y, 1)(e^{il yt}g)\|_{L^2_tL^2_y}=&\int^{t_1}_{t_0}\int_{\R}(|\zeta+l t|^2+1)|\tilde{g}(t,\zeta)|^2d\zeta dt\\
=&\int^{t_1}_{t_0}\int_{\R}(|\zeta- k t+l t|^2+1)|\tilde{g}(t,\zeta- k t)|^2d\zeta dt.
\end{aligned}
\end{equation}
Notice that $\pa_t(e^{ikyt}f)=e^{ikyt}g$ and $f(t_0)=0$. Taking the Fourier transform in $y$-variable, we get
\begin{align*}
\frac{d}{dt}\widetilde{f}(t,\zeta- k t)=\tilde{g}(t,\zeta- k t), \quad \widetilde{f}(t_0,\zeta- k t_0)=0,
\end{align*}
which yields
\begin{align*}
\widetilde{f}(t,\zeta- k t)=\int^{t}_{t_0} \tilde{g}(\tau,\zeta- k \tau)d\tau.
\end{align*}
Let $F(\zeta)=\sup_{t\in[t_0,t_1]}|\widetilde{f}(t,\zeta- k t)|$ for fixed $\zeta\in\R$.
We use H\"older's inequality to obtain
\begin{align*}
F(\zeta)\leq &\|\tilde{g}(t,\zeta- k t)\|_{L^1_t(t_0,t_1)}\\
\leq&\|(1+|\zeta- k t+l t|^2)\tilde{g}(t,\zeta- k t)\|_{L^2_t(t_0,t_1)}\|(1+|\zeta- k t+l t|^2)^{-1}\|_{L^2_t(t_0,t_1)}.
\end{align*}
Due to
\begin{align*}
\int_{\R}\frac{1}{1+|\zeta- k t+l t|^2}dt\leq \frac{1}{| k-l|}\int_{\R}\frac{1}{1+|t|^2}dt=\frac{\pi}{| k-l|},
\end{align*}
we have
\begin{align*}
|F(\zeta)|^2\leq \frac{\pi}{| k-l|}\|(1+|\zeta- k t+l t|^2)\tilde{g}(t,\zeta- k t)\|^2_{L^2_t(t_0,t_1)},
\end{align*}
which together with \eqref{equality; leq1} implies
\begin{align}\label{esti: W}
\int_{\R}|F(\zeta)|^2 d\zeta\leq \frac{\pi}{| k-l|}\|(\pa_y, 1)(e^{il yt}g)\|^2_{L^2_tL^2_y}.
\end{align}

For every $t\in[t_0,t_1]$, there holds
\begin{align}\label{esti: omL2I}
\|f(t)\|^2_{L^2_y(I)}=\|\widetilde{f}(t)\|^2_{L^2_\zeta(\R)}=\int_{\R}|\widetilde{f}(t,\zeta- k t)|^2 d\zeta\leq \int_{\R}|F(\zeta)|^2d\zeta,
\end{align}
which gives
$$\|f(t)\|^2_{L^\infty_t L^2_y}\leq \frac{\pi}{| k-l|}\|(\pa_y, 1)(e^{il yt}g)\|^2_{L^2_tL^2_y}.$$
\textbf{Step 2.} The estimate of $\|(\pa_y,k)\psi\|_{L^2_t L^2_y}$.

For $|k|\leq 1$, let
\begin{align*}
\psi_{*}(t,k,y)=\frac{1}{\sqrt{2\pi}}\int_{\R}\frac{\widetilde{f}(t,k,\zeta)}{\zeta^2+ 1}e^{iy\zeta} d\zeta.
\end{align*}
We have $\psi_{*}(y)\in H^2(\R)$ and $-(\pa^2_y-1)\psi_{*}=f$ for $y\in[-1,1]$. Thanks to $\psi=-(\pa^2_y-k^2)^{-1}f$, we get
\begin{align*}
\|(\pa_y, k)\psi(t)\|^2_{L^2_y}=&\lan\psi(t),f(t)\ran=\lan \psi(t),-(\pa^2_y-1)\psi_{*}(t)\ran\\
=&\lan \pa_y\psi(t),\pa_y\psi_{*}(t)\ran +\lan \psi(t),\psi_{*}(t)\ran\leq \|(\pa_y, 1)\psi(t)\|_{L^2_y}\|(\pa_y, 1)\psi_{*}\|_{L^2_y}.
\end{align*}
Due to $\|\psi\|_{L^2_y}\leq \|\pa_y \psi\|_{L^2_y}$, we have
\begin{align*}
\|(\pa_y, k)\psi\|^2_{L^2_t L^2_y}\leq \|(\pa_y, 1)\psi_{*}\|^2_{L^2_t L^2_y}=\int^{t_1}_{t_0}\int_{\R}\frac{|\widetilde{f}(t,\zeta)|^2}{|\zeta|^2+1}d\zeta dt= \int^{t_1}_{t_0}\int_{\R}\frac{|\widetilde{f}(t,\zeta- k t)|^2}{|\zeta- k t|^2+1} d\zeta dt.
\end{align*}
Using \eqref{esti: W}, we then obtain
\begin{equation}\label{psiL2L2kleq1}
\begin{aligned}
| k|\|(\pa_y, k)\psi\|^2_{L^2_tL^2_y}=& \int^{t_1}_{t_0}\int_{\R}\frac{| k||\widetilde{f}(t,\zeta- k t)|^2}{|\zeta- k t|^2+1} d\zeta dt\leq \int^{t_1}_{t_0}\int_{\R}\frac{| k||F(\zeta)|^2}{|\zeta- k t|^2+1} d\zeta dt\\
\leq&\int_{\R}|F(\zeta)|^2\int^{t_1}_{t_0}\frac{| k|}{|\zeta- k t|^2+1}dtd\zeta
\leq \frac{\pi}{| k-l|}\|(\pa_y, 1)(e^{il yt}g)\|^2_{L^2_tL^2_y}.
\end{aligned}
\end{equation}

For $|k|\geq 1$, let
$\psi_{*}(t,k,y)=\frac{1}{\sqrt{2\pi}}\int_{\R}\frac{\widetilde{f}(t,k,\zeta)}{\zeta^2+ k^2}e^{iy\zeta} d\zeta.$
In some way, as in the above deriving
\begin{align*}
| k|^2\|(\pa_y, k)\psi\|^2_{L^2_tL^2_y}\leq \frac{\pi}{| k-l|}\|(\pa_y, 1)(e^{il yt}g)\|^2_{L^2_tL^2_y}.
\end{align*}
\textbf{Step 3.} The estimate of $\|(\pa_y,k)f\|_{L^2_tL^2_y}$.

Thanks to $\pa_t f+ikyf=g$ and $f(t_0)=0$, we have
\begin{align*}
\pa_t(e^{il yt}f)+i( k-l)y(e^{il yt}f)=e^{il yt}g,\quad e^{ily t_0}f(t_0)=0.
\end{align*}
Taking the derivative of $y$, we get
\begin{align*}
(\pa_t+i( k-l)y)\pa_y(e^{il yt}f)+i( k-l)(e^{il yt}f)=\pa_y (e^{il yt}g),\quad \pa_y(e^{il y t_0}f(t_0))=0,
\end{align*}
which implies
\begin{align*}
e^{i(k-l)yt}\pa_y(e^{il yt}f)+\int^{t}_{t_0}i(k-l)e^{iky\tau}f-e^{i(k-l)y\tau}\pa_y(e^{il y\tau}g(\tau)d\tau=0.
\end{align*}
Taking the $L^2_y$ norm, we obtain
\begin{align*}
\|\pa_y(e^{il yt}f(t))\|_{L^2_y}\leq &\int^{t}_{t_0}\|i( k-l)e^{il y\tau}f(\tau)-\pa_y(e^{il y\tau}g(\tau))\|_{L^2_y}d\tau\\
\leq&| k-l|t\|f\|_{L^\infty_t L^2_y}+t^{\frac12}\|\pa_y(e^{il yt}g)\|_{L^2_tL^2_y}.
\end{align*}
Due to $\pa_yf=e^{- il yt}\pa_y(e^{il yt}f)-il tf$, we have
\begin{align*}
\|\pa_y f\|_{L^2_y}\leq& \|\pa_y(e^{il yt}f(t))\|_{L^2_y}+|l t|\|f(t)\|_{L^2_y}\\
\leq&(| k-l |+|l|)t\|f\|_{L^\infty_t L^2_y}+t^{\frac12}\|\pa_y(e^{il yt}g)\|_{L^2_tL^2_y}.
\end{align*}
Furthermore, we deduce
\begin{align*}
&\nu\|(1+ \nu^{\frac13}t)^{-s}\pa_yf\|^2_{L^2_tL^2_y}+\nu| k|^2\|(1+ \nu^{\frac13}t)^{-s}f\|^2_{L^2_tL^2_y}\\
\leq&\nu(| k-l|+|l|)^2\|(1+ \nu^{\frac13}t)^{-s}t\|^2_{L^2(t_0,t_1)}\|f\|^2_{L^\infty_t L^2_y}\\
&+\nu\|(1+ \nu^{\frac13}t)^{-s}t^{\frac12}\|^2_{L^2(t_0,t_1)} \|\pa_y(e^{il yt}g)\|^2_{L^2_tL^2_y}
+\nu| k|^2\|(1+\nu^{\frac13}t )^{-s}\|^2_{L^2(t_0,t_1)}\|f\|^2_{L^\infty_t L^2_y}\\
\lesssim & (1+\nu^{\frac13} t_1)^{(3-2s)}((| k-l|+|l|)^2 +\nu^{2/3}| k|^2)\|f\|^2_{L^\infty_t L^2_y}\\
&+(1+\nu^{\frac13} t_1)^{(2-2s)}\nu^{1/3}\|\pa_y(e^{il yt}g)\|^2_{L^2_tL^2_y}.
\end{align*}
Inserting \eqref{esti: w+psi} into the above inequality, we obtain
\begin{align*}
&\nu\|(1+ \nu^{\frac13}t)^{-s}(\pa_y,k)f\|^2_{L^2_tL^2_y}\\
\leq &C(1+\nu^{\frac13} t_1)^{(3-2s)}(| k-l|+|l|^2| k-l|^{-1})\|(\pa_y, 1)(e^{il yt}g)\|^2_{L^2_tL^2_y}.
\end{align*}

For $|k-l|\geq 1$, noticing that
\begin{equation*}
\begin{aligned}
&\|(\pa_y, k-l)(e^{il yt}g)\|_{L^2_tL^2_y}
=\int^{t_1}_{t_0}\int_{\R}(|\zeta- k t+l t|^2+|k-l|^2)|\tilde{g}(t,\zeta- k t)|^2d\zeta dt,\\
&\int_{\R}\frac{1}{|\zeta- k t+l t|^2+|k-l|^2}dt\leq \frac{1}{|k-l|^2}\int_{\R}\frac{1}{1+|t|^2}dt=\frac{\pi}{|k-l|^2},
\end{aligned}
\end{equation*}
we can obtain \eqref{esti: w+psi} and \eqref{esti: pay+kwL2L2} by a similar calculation as step 1--step 3.
\end{proof}
Consider the homogeneous equation
\begin{equation}\label{equ: fk}
\left\{
\begin{aligned}
&\pa_t f(t,k,y)-\nu (\pa^2_y-k^2) f(t,k,y)+iky f(t,k,y)=0,\quad k\neq 0, \\
&f(t,k,\pm 1)=0, \quad f|_{t=0}=f(0).
\end{aligned}
\right.
\end{equation}
The solution $f(t,k,y)$ satisfies the following estimates.
\begin{lemma}\label{prop: f esti}
Let $f$ solve \eqref{equ: fk}. There hold the enhanced dissipation estimates
\begin{align}
\|k f\|_{L^2_y}\leq& C(\nu t)^{-\frac12}(1+t)^{-1}\|f(0)\|_{L^2_y}, \label{enhan1}\\
\| f\|_{L^2_y}\leq& C(1+\nu t+\nu k^2t^3)^{-\frac12}\|f(0)\|_{L^2_y}.\label{enhan2}
\end{align}
Integrating with $k$, there hold
\begin{align}
\|k f\|_{L^2_kL^2_y}\leq& C(\nu t)^{-\frac12}(1+t)^{-1}\|f(0)\|_{L^2_kL^2_y},\label{enhan3}\\
\|\mathcal{M}(k) f\|_{L^2_kL^2_y}\leq& C\nu^{-\frac13}(1+t)^{-1}(1+\nu t)^{-\frac16}\|f(0)\|_{L^2_kL^2_y},\label{enhan4}
\end{align}
where $\mathcal{M}$ is defined in \eqref{oper M}.
\end{lemma}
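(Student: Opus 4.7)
The plan is to establish the pointwise-in-$k$ estimate \eqref{enhan2} first and then deduce \eqref{enhan1}, \eqref{enhan3}, \eqref{enhan4} as consequences. The basic energy identity, obtained by pairing \eqref{equ: fk} with $\bar f$ in $L^2_y$ and noting that the transport $iky$ is pointwise imaginary, reads
\[
\frac{1}{2}\frac{d}{dt}\|f\|_{L^2_y}^2 + \nu\|\pa_y f\|_{L^2_y}^2 + \nu k^2 \|f\|_{L^2_y}^2 = 0.
\]
Combined with the Poincar\'e inequality on $H^1_0(-1,1)$, this yields $\|f(t)\|_{L^2_y}\leq e^{-c\nu t - \nu k^2 t}\|f(0)\|_{L^2_y}$, and the elementary bound $e^{-x}\leq C(1+x)^{-1}$ furnishes the $(1+\nu t)^{-1/2}$ factor in \eqref{enhan2}.

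For the enhanced-dissipation factor $(1+\nu k^2 t^3)^{-1/2}$ in \eqref{enhan2}, I would pass to the moving-frame variable $g(t,y)=e^{ikty}f(t,y)$, which satisfies the Dirichlet condition $g(t,\pm 1)=0$ with $\|g\|_{L^2_y}=\|f\|_{L^2_y}$ and $\|(\pa_y-ikt)g\|_{L^2_y}=\|\pa_y f\|_{L^2_y}$, and obeys the shifted equation $\pa_t g = \nu(\pa_y-ikt)^2 g - \nu k^2 g$. The $t^3$-decay is then extracted via a Villani-type hypocoercive Lyapunov functional
\[
\Phi(t)=\|f\|_{L^2_y}^2+\alpha_1 t\|\pa_y f\|_{L^2_y}^2+\alpha_2 t^2 k\,\im\lan\pa_y f,f\ran_{L^2_y}+\alpha_3 t^3 k^2\|f\|_{L^2_y}^2
\]
with small constants $\alpha_i>0$. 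Differentiating $\Phi$ using the auxiliary identity $\frac{d}{dt}\|\pa_y f\|^2 = -2k\,\im\lan\pa_y f,f\ran - 2\nu\|\pa_y^2 f\|^2 - 2\nu k^2\|\pa_y f\|^2$, which relies on $\pa_y^2 f(\pm 1)=0$ (obtained by evaluating \eqref{equ: fk} at the boundary), together with the evolution equation for $\im\lan\pa_y f,f\ran$, a suitable tuning of the $\alpha_i$ makes $\Phi'(t)\leq 0$. Since $\Phi(t)\gtrsim(1+\nu k^2 t^3)\|f(t)\|^2$, this yields \eqref{enhan2}.

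The remaining estimates are deduced as follows. For \eqref{enhan1} I combine two pointwise-in-$k$ bounds: (i) parabolic smoothing, from integrating the energy identity over $[0,t]$ and using the monotonicity $\|f(s)\|\geq\|f(t)\|$, giving $2\nu k^2 t\|f(t)\|^2\leq \|f(0)\|^2$, hence $|k|\|f(t)\|\leq (2\nu t)^{-1/2}\|f(0)\|$; and (ii) from \eqref{enhan2}, $|k|\|f(t)\|\leq C|k|(1+\nu k^2 t^3)^{-1/2}\|f(0)\|\leq C\nu^{-1/2}t^{-3/2}\|f(0)\|$ upon optimizing in $k$. Taking the minimum of the two yields \eqref{enhan1}. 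Estimate \eqref{enhan3} is immediate by integrating \eqref{enhan1} in $k$. For \eqref{enhan4}, split into $|k|\leq 1$ (where $\mathcal{M}(k)=|k|^{2/3}$ and I apply \eqref{enhan2}) and $|k|\geq 1$ (where $\mathcal{M}(k)=1$ and I apply \eqref{enhan1}); the low-frequency supremum $\sup_k |k|^{4/3}/(1+\nu t+\nu k^2 t^3)\leq C\nu^{-2/3}t^{-2}(1+\nu t)^{-1/3}$, attained at $k^2\sim(1+\nu t)/(\nu t^3)$, produces the claimed bound.

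The main obstacle is the hypocoercive step: on $\R$ the explicit Fourier-in-$y$ formula delivers the $t^3$ decay trivially, but on $[-1,1]$ the Dirichlet boundary obstructs this, and the sign-indefinite cross-term $\im\lan\pa_y f,f\ran$ must be delicately balanced against the dissipative and transport terms through the weights $\alpha_i$ so that $\Phi$ is nonincreasing.
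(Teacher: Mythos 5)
Your core argument coincides with the paper's: the paper proves \eqref{enhan1}--\eqref{enhan2} by exactly the hypocoercivity functional you describe, namely $\Phi_k(t)=(1+\gamma_0\nu|k|^2t^3)\|f\|_{L^2_y}^2+\alpha_0\nu t\|(\pa_y,k)f\|_{L^2_y}^2+\beta_0\nu t^2\re\lan ikf,\pa_yf\ran$ with explicit constants, showing $\frac{d}{dt}\Phi_k\leq 0$ and $\Phi_k(t)\gtrsim(1+\nu k^2t^3)\|f\|^2_{L^2_y}+\nu t\|(\pa_y,k)f\|^2_{L^2_y}$ (details are delegated to Wei--Zhang). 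One point you leave implicit: your coefficients $\alpha_1,\alpha_2$ must themselves scale like $\nu$ (as in the paper's $\alpha_0\nu t$ and $\beta_0\nu t^2$), since otherwise the term $\alpha_1\|\pa_yf\|^2$ produced by differentiating $\alpha_1 t\|\pa_yf\|^2$ cannot be absorbed by the dissipation $-2\nu\|\pa_yf\|^2_{L^2_y}$; with $\alpha_i\sim\nu$ your functional is the paper's. Your derivations of \eqref{enhan1} (minimum of the parabolic-smoothing bound and the $\nu^{-1/2}t^{-3/2}$ bound extracted from \eqref{enhan2}) and of \eqref{enhan3} are correct and match what the paper does implicitly.

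The one step that fails as written is the high-frequency half of \eqref{enhan4}. For $|k|\geq 1$ you invoke \eqref{enhan1}, which gives $\|\mathcal{M}f\|_{L^2_k(|k|\geq1)L^2_y}\leq C(\nu t)^{-1/2}(1+t)^{-1}\|f(0)\|_{L^2_kL^2_y}$; but for $1\leq t\leq\nu^{-1/3}$ one has $(\nu t)^{-1/2}\gg\nu^{-1/3}(1+\nu t)^{-1/6}$ (at $t=1$ the discrepancy is a factor $\nu^{-1/6}$), so this does not imply the claimed bound on that time range. The fix is immediate: on $|k|\geq 1$ use \eqref{enhan2} instead, since $\nu k^2t^3\geq\nu t^3$ there and $\nu^{1/3}(1+t)(1+\nu t)^{1/6}\lesssim(1+\nu t+\nu t^3)^{1/2}$; equivalently, observe that $\mathcal{M}(k)\leq 2|k|^{2/3}$ for \emph{all} $k$, so your low-frequency computation $\sup_k|k|^{4/3}(1+\nu t+\nu k^2t^3)^{-1}\lesssim\nu^{-2/3}t^{-2}(1+\nu t)^{-1/3}$ already covers the whole line --- which is precisely how the paper argues. (Both your computation and the paper's also need the trivial bound $\|\mathcal{M}f\|_{L^2_kL^2_y}\leq\|f(0)\|_{L^2_kL^2_y}$ to upgrade $t^{-1}$ to $(1+t)^{-1}$ for $t\lesssim 1$.)
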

\begin{proof}
We adopt the hypocoercivity argument. More precisely, we construct the energy functional
$$\Phi_k(t)=(1+\gamma_0\nu | k|^2 t^3)\|f\|^2_{L^2_y}+\alpha_0\nu t\|(\pa_y, k)f\|^2_{L^2_y}+\beta_0\nu t^2\re\lan ikf,\pa_y f\ran,$$
with $\gamma_0=1/600$, $\alpha_0=12\gamma_0$, $\beta_0=5\gamma_0$,
which satisfies
$$\Phi_k(t)\geq (1+\gamma_0\nu | k|^2 t^3/4)\|f\|^2_{L^2_y}+\alpha_0\nu t\|(\pa_y, k) f\|^2_{L^2_y}/4,\quad \frac{d}{dt}\Phi_k(t)\leq 0.$$
It follows that
\begin{align*}
(1+\gamma_0\nu | k|^2 t^3/4)\|f\|^2_{L^2_y}+\alpha_0\nu t\|(\pa_y, k) f\|^2_{L^2_y}/4\leq \Phi_k(0)=\|f(0)\|_{L^2},
\end{align*}
which yields \eqref{enhan1} and \eqref{enhan2}. For more details, please see \cite[Proposition 4.1]{Wei-Zhang-3}.

The estimate \eqref{enhan3} results from integrating both sides of \eqref{enhan1} with respect to $k$.
For \eqref{enhan4}, a direct computation shows
\begin{align*}
\|\mathcal{M}(k) f\|^2_{L^2_kL^2_y}\lesssim &\int_{\R}\frac{|k|^{\frac43}}{1+\nu t+\nu k^2 t^3}\|f(0)\|^2_{L^2_y}dk\\
\lesssim &\nu^{-\frac23}(1+t)^{-2}\int_{\R }\frac{1}{(1+\nu t+\nu k^2 t^3)^{\frac13}}\|f(0)\|^2_{L^2_y}dk\\
\lesssim & \nu^{-\frac23}(1+t)^{-2}(1+\nu t)^{-\frac13}\|f(0)\|^2_{L^2_k L^2_y}.
\end{align*}
Hence we complete the proof of Lemma \ref{prop: f esti}.
\end{proof}
Now, let us return to the inhomogeneous equation \eqref{equ: fg}.
\begin{lemma}\label{lemma: inviscid damping}
Let $f$ be the solution of \eqref{equ: fg}. Then we have
\begin{align}\label{fY esti}
\|f\|^2_{Y}\leq C\|f(0)\|^2_{L^2}+C\nu^{-1}\int^t_0\|\na\De^{-1} g\|^2_{L^2} ds,
\end{align}
where
\begin{align}\label{fY norm}
\|f\|_{Y}:=\|f\|_{L^\infty L^2}+\nu^{\frac12}\|\na f\|_{L^2L^2}+\|\mathcal{M}_1 \na \De^{-1}f\|_{L^2L^2}.
\end{align}
\end{lemma}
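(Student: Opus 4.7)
The strategy is to control the three pieces defining $\|f\|_{Y}$ separately.

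For the first two terms $\|f\|_{L^\infty L^2}^2 + \nu\|\nabla f\|_{L^2L^2}^2$, I would pair \eqref{equ: fg} with $f$ in $L^2(\R\times[-1,1])$. The transport term $\langle y\partial_x f, f\rangle$ vanishes, because $\int y\partial_x|f|^2 \,dxdy = 0$ after an integration by parts in $x$, and the viscous term produces $\nu\|\nabla f\|^2$ upon integrating by parts using the Dirichlet condition $f|_{y=\pm 1}=0$. The source term is rewritten as $\langle g, f\rangle = -\langle \nabla\Delta^{-1}g, \nabla f\rangle$ (the integration by parts is valid because $f$ vanishes on $y=\pm 1$) and then bounded by the weighted Cauchy--Schwarz estimate $|\langle\nabla\Delta^{-1}g,\nabla f\rangle|\leq \tfrac{\nu}{2}\|\nabla f\|^2 + \tfrac{1}{2\nu}\|\nabla\Delta^{-1}g\|^2$. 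Absorbing $\tfrac{\nu}{2}\|\nabla f\|^2$ into the left-hand side and integrating in time gives the desired control with the $\nu^{-1}$ factor on $\|\nabla\Delta^{-1}g\|^2_{L^2L^2}$.

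For the third term $\|\mathcal{M}_1\nabla\Delta^{-1}f\|_{L^2L^2}$, I would exploit the equivalence $\mathcal{M}_1(k)^2 \asymp |k|+|k|^2$ and work mode by mode after an $x$-Fourier decomposition. Writing $\psi_k = \Delta_k^{-1}\hat{f}_k$, the target becomes the per-mode bound $(|k|+|k|^2)\|(\partial_y,k)\psi_k\|_{L^2_tL^2_y}^2 \lesssim \|\hat{f}_k(0)\|_{L^2_y}^2 + \nu^{-1}\|(\partial_y,k)\Delta_k^{-1}\hat{g}_k\|^2_{L^2_tL^2_y}$. I would split $\hat{f}_k = \hat{f}_{k,h} + \hat{f}_{k,i}$ linearly into the homogeneous viscous solution with initial data $\hat{f}_k(0)$ and the inhomogeneous zero-data solution with source $\hat{g}_k$. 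The homogeneous part is controlled by the enhanced-dissipation estimates \eqref{enhan1}--\eqref{enhan2} of Lemma~\ref{prop: f esti}, whose time decay $(1+\nu k^2 t^3)^{-1/2}$ combines with Poincar\'e in $y$ to yield sufficient integrability against the weight $\mathcal{M}_1(k)^2$. The inhomogeneous part is handled by viewing the viscous equation as the inviscid equation $(\partial_t + iky)\hat{f}_{k,i} = \hat{g}_k + \nu\Delta_k\hat{f}_{k,i}$ with effective source and invoking Lemma~\ref{lemma: om decom} with $l = 0$ (the argument in Steps~1 and~3 of that proof extends verbatim since $\int dt/(1+|\zeta - kt|^2)\leq \pi/|k|$ and $(|k-l|+|l|)^2 = |k|^2$ when $l = 0$). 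Summation over $k$ via Plancherel delivers the final bound.

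The principal obstacle is the regularity mismatch in the second step: Lemma~\ref{lemma: om decom} bounds $(|k|+|k|^2)\|(\partial_y,k)\psi_k\|^2_{L^2L^2}$ in terms of first-order derivatives of the source, whereas \eqref{fY esti} must close on the lower-regularity quantity $\nabla\Delta^{-1}g$. The two extra derivatives carried by the effective source $\nu\Delta_k\hat{f}_{k,i}$ must therefore be tamed using the first-step energy bound $\nu\|\nabla f\|^2_{L^2L^2}\lesssim \|f(0)\|^2 + \nu^{-1}\|\nabla\Delta^{-1}g\|^2_{L^2L^2}$, combined with an integration by parts in the $y$-Fourier variable of the type performed in Steps~1 and~3 of the proof of Lemma~\ref{lemma: om decom}, so that the $\nu\Delta_k\hat{f}_{k,i}$ contribution is redistributed onto $\nu^{1/2}\nabla f$ before being absorbed. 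After this reorganization and summation in $k$, the desired $\mathcal{M}_1$-weighted $L^2_tL^2_{x,y}$ bound for $\nabla\Delta^{-1}f$ follows, completing the proof of \eqref{fY esti}.
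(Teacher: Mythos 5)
Your first step is correct and coincides with the paper's: pairing with $f$, integrating by parts, rewriting $\langle g,f\rangle=-\langle\nabla\Delta^{-1}g,\nabla f\rangle$ and absorbing gives \eqref{f energy norm}. The second step, however, contains a genuine gap and would not close. The paper bounds $\|\mathcal{M}_1\nabla\Delta^{-1}f\|_{L^2L^2}$ by a hypocoercivity-type argument: it differentiates $\re\langle f,\mathfrak{J}_k[f]\rangle$ in time, where $\mathfrak{J}_k$ is the singular integral operator built from the Green's function $G_k$; the transport term $-ikyf$ then produces the good negative term $-\frac{|k|^2}{4}\|(\pa_y,k)\psi\|^2_{L^2_y}$, and the bound $\|\mathfrak{J}_k[f]\|_{L^2}\leq C\min\{1,|k|\}\|f\|_{L^2}$ supplies the weight $|k|+|k|^2\asymp\mathcal{M}_1(k)^2$ while keeping every source contribution at the level of $\nu^{-1}\|(\pa_y,k)(\pa^2_y-k^2)^{-1}\hat g_k\|^2$. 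Your homogeneous/inhomogeneous splitting cannot reproduce this. For the homogeneous piece, \eqref{enhan2} gives $\|\hat f_{k,h}(t)\|_{L^2_y}\lesssim(1+\nu k^2t^3)^{-\frac12}\|\hat f_k(0)\|_{L^2_y}$, hence at best $\mathcal{M}_1(k)^2\|(\pa_y,k)\psi_{k,h}\|^2_{L^2_tL^2_y}\lesssim(|k|+|k|^2)(\nu k^2)^{-\frac13}\|\hat f_k(0)\|^2_{L^2_y}$, which for $\nu\leq|k|\leq1$ is of order $(|k|/\nu)^{\frac13}\|\hat f_k(0)\|^2_{L^2_y}$ and blows up as $\nu\to0$. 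Enhanced dissipation acts on the time scale $\nu^{-\frac13}|k|^{-\frac23}$, whereas the low-frequency weight $\mathcal{M}_1(k)^2\approx|k|$ is calibrated to the $\nu$-independent inviscid-damping time scale $|k|^{-1}$; only the $\mathfrak{J}_k$ mechanism (or an equivalent one) delivers the latter.

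For the inhomogeneous piece the difficulty you flag is real but your remedy does not resolve it. Lemma~\ref{lemma: om decom} with $l=0$ bounds $(|k|+|k|^2)\|(\pa_y,k)\psi\|^2_{L^2_tL^2_y}$ by $|k|^{-1}\lan k\ran^{-1}\|(\pa_y,\lan k\ran)G\|^2_{L^2_tL^2_y}$, where the effective source $G=\hat g_k+\nu(\pa^2_y-k^2)\hat f_{k,i}$ must lie in $H^1_0(-1,1)$. Neither requirement is met: $(\pa^2_y-k^2)\hat f$ need not vanish at $y=\pm1$, and the right-hand side measures a full derivative of $G$, while \eqref{fY esti} only offers $\nu^{-1}\|\nabla\Delta^{-1}g\|^2$ --- a gap of two $y$-derivatives on $g$ that the first-step energy bound (which controls $\nu^{\frac12}\nabla f$, not any positive derivative of $g$) cannot fill. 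Likewise, "redistributing" $\nu(\pa^2_y-k^2)\hat f$ onto $\nu^{\frac12}\nabla f$ would require gaining two derivatives at the cost of only $\nu^{\frac12}$, and the change of variables $\zeta\mapsto\zeta-kt$ used in Lemma~\ref{lemma: om decom} does not provide this. The missing idea is precisely the commutator structure $[\,y\,,\mathfrak{J}_k]$ that converts the transport term into the negative multiple of $|k|^2\|(\pa_y,k)\psi\|^2_{L^2_y}$.
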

\begin{proof}
We first perform the Fourier transform on \eqref{equ: fg} with respect to $x$-variable to get
\begin{equation}
\left\{
\begin{aligned}\label{equ: f Four}
&\pa_t f(t,k,y)-\nu(\pa^2_y-k^2)f(t,k,y)+ikyf(t,k,y)=g(t,k,y),\\
&f|_{t=0}=f(0,k,y), \quad f(t,k,\pm1)=0.
\end{aligned}
\right.
\end{equation}
Taking the inner product with $f(t,k,y)$ for \eqref{equ: f Four}, we have
\begin{align*}
\frac{d}{dt}\|f\|^2_{L^2_y}=&2\re\lan \nu (\pa^2_y-k^2)f-iky f+g, f\ran =2 \re\lan (\pa^2_y-k^2) (\nu f+ (\pa^2_y-k^2)^{-1}g),f\ran\\
=& -2\nu\|(\pa_y, k)f\|^2_{L^2_y}-2\re\lan \pa_y (\pa^2_y-k^2)^{-1}g,\pa_y f\ran-2| k|^2\re\lan (\pa^2_y-k^2)^{-1}g,f\ran\\
\leq&-2\nu\|(\pa_y, k)f\|^2_{L^2_y}+2 \|(\pa_y, k)f\|_{L^2_y}\|(\pa_y, k)(\pa^2_y-k^2)^{-1}g\|_{L^2_y}\\
\leq&-\nu\|(\pa_y, k)f\|^2_{L^2_y}+\nu^{-1}\|(\pa_y, k)(\pa^2_y-k^2)^{-1}g\|^2_{L^2_y}.
\end{align*}
Integrating with respect to $t$ and $k$ and then using Plancherel's formula, we derive
\begin{align}\label{f energy norm}
\|f(t)\|^2_{L^2}+\nu\int^t_0\|\na f(s)\|^2_{L^2} ds\leq \|f(0)\|^2_{L^2}+\nu^{-1} \int^t_0\|\na  \De^{-1}g\|^2_{L^2} ds.
\end{align}

It remains to bound $\|\mathcal{M}_1 \na \De^{-1}f\|_{L^2L^2}$.
Define $\mathfrak{J}_k$ the singular integral operator that
\begin{align*}
\mathfrak{J}_k[f](y)=| k|p.v. \frac{ k}{| k|}\int^1_{-1}\frac{1}{2i(y-y')}G_k(y,y')f(y')dy', \quad k\neq 0,
\end{align*}
with
\begin{equation*}
G_{k}(y,y')=-\frac{1}{ k\sinh 2 k}
\left\{
\begin{aligned}
&\sinh ( k(1-y'))\sinh( k(1+y)), \quad y\leq y',\\
&\sinh( k(1-y))\sinh( k(1+y')), \quad y\geq y'.
\end{aligned}
\right.
\end{equation*}
Since $\mathfrak{J}_k$ is symmetric, we have
\begin{align*}
\frac12\frac{d}{dt}\re\lan f,\mathfrak{J}_k[f]\ran_y=\re\lan \frac{d}{dt} f,\mathfrak{J}_k[f]\ran_y&=\re\lan-ikyf+\nu (\pa^2_y-k^2)f+g, \mathfrak{J}_k[f] \ran_y\\
&=:\textrm{I}_1+\textrm{I}_2+\textrm{I}_3.
\end{align*}
Let $\psi=(\pa^2_y-k^2)^{-1}f(t,k,y)$. For $\textrm{I}_1$, we use the symmetry of $\mathfrak{J}_k$ again to obtain
\begin{align*}
2\textrm{I}_1=&\re\lan\mathfrak{J}_k[-ikyf],f\ran_y+ \re\lan \mathfrak{J}_k[f],-ikyf\ran_y \\
=&-\re\int^1_{-1}| k|p.v. \Big(\int_{\R}\operatorname{sgn}( k)\frac{G_k(y,y')}{2(y-y')} k y'f(y')\overline{f}(y)dy'\Big)dy\\
&+\re\int^1_{-1}\Big(| k|p.v. \int^1_{-1}\operatorname{sgn}( k)\frac{G_k(y,y')}{2(y-y')}f(y')ky\overline{f}(y)dy'\Big)dy\\
=&\frac{| k|^2}{2}\int^1_{-1}p.v.\Big(\int^1_{-1}\frac{G_k(y,y')}{(y-y')}(y-y')f(y')dy'\Big)\overline{f}(y)dy\\
=&\frac{| k|^2}{2}\lan(\pa^2_y-k^2)^{-1}f,f\ran=\frac{| k|^2}{2}\lan\psi,(\pa^2_y-k^2) \psi\ran=-\frac{| k|^2}{2}\|(\pa_y, k)\psi\|^2_{L^2_y},
\end{align*}
which is
$$\textrm{I}_1=-\frac{| k|^2}{4}\|(\pa_y, k)\psi\|^2_{L^2_y}.$$

For $\textrm{I}_2$, integrating by parts and using the condition $\mathfrak{J}_k [f]|_{y=\pm1}=0$, we have
\begin{equation}\label{esti: T2}
\begin{aligned}
\textrm{I}_2=\nu \re\lan (\pa^2_y-k^2) f,\mathfrak{J}_k[f]\ran_y=&-\nu\re\lan \pa_y f,\pa_y\mathfrak{J}_k [f]\ran_y-\nu| k|^2\re\lan f,\mathfrak{J}_k [f]\ran_y\\
\leq&\nu\|(\pa_y, k)f\|_{L^2_y}\|(\pa_y, k)\mathfrak{J}_k[f]\|_{L^2_y}.
\end{aligned}
\end{equation}
In light of Lemma \ref{operator estimate} and Lemma 7.2 in \cite{Arbon-Bedrossian}, we deduce
\begin{equation}\label{payJkfL2}
\begin{aligned}
\|(\pa_y, k)\mathfrak{J}_k[f]\|_{L^2_y}\leq& \|[\pa_y, \mathfrak{J}_k][f]\|_{L^2_y}+\|\mathfrak{J}_k[\pa_y f]\|_{L^2_y}+\|k\mathfrak{J}_k[f]\|_{L^2_y}\\
\leq &C(| k|\|f\|_{L^2_y}+\min\{1,|k|\}\|\pa_y f\|_{L^2_y})\\
\leq &C\min\{1,|k|\}\|(\pa_y,k) f\|_{L^2_y}.
\end{aligned}
\end{equation}
Inserting \eqref{payJkfL2} into \eqref{esti: T2}, we derive that
\begin{align*}
\textrm{I}_2\leq C\min\{1,|k|\}\nu\|(\pa_y, k)f\|^2_{L^2_y}.
\end{align*}
In a similar way, we have
\begin{align*}
\textrm{I}_3\leq C\min\{1,|k|\} (\nu^{-1}\|(\pa_y, k)(\pa^2_y-k^2)^{-1}g\|^2_{L^2_y}+ \nu\|(\pa_y, k)f\|^2_{L^2_y}).
\end{align*}
Combining the estimates $\textrm{I}_1-\textrm{I}_3$, we obtain
\begin{align*}
&\frac12\frac{d}{dt}\re\lan f, \mathfrak{J}_k [f]\ran\\
\leq &-\frac{| k|^2}{4}\|(\pa_y, k)\psi\|^2_{L^2_y}+C \min\{1,|k|\} (\nu\|(\pa_y, k)f\|^2_{L^2_y}+\nu^{-1}\|(\pa_y,k)(\pa^2_y-k^2)^{-1}g\|_{L^2_y}),
\end{align*}
which yields
\begin{equation}\label{pay kpsiL2L2}
\begin{aligned}
&\int^t_0| k|^2\|(\pa_y, k)\psi(s)\|^2_{L^2_y} ds\\
\lesssim &|\lan f(t), \mathfrak{J}_k[f(t)]\ran|+|\lan f(0), \mathfrak{J}_k[f(0)]\ran|+\nu \min\{1,|k|\} \int^t_0 \|(\pa_y, k)f\|^2_{L^2_y} ds\\
&+\nu^{-1} \min\{1,|k|\} \int^t_0 \|(\pa_y, k)(\pa_y^2-k^2)^{-1}g\|^2_{L^2_y}ds.
\end{aligned}
\end{equation}
By Lemma \ref{operator estimate}, we have
\begin{align*}
|\lan f(t), \mathfrak{J}_k[f(t)]\ran|\leq &\|f(t)\|_{L^2_y}\|\mathfrak{J}_k[f(t)]\|_{L^2_y}\leq \min\{1,|k|\}\|f(t)\|^2_{L^2_y}, \\
 |\lan f(0), \mathfrak{J}_k[f(0)]\ran|\leq &\|f(0)\|_{L^2_y}\|\mathfrak{J}_k[f(0)]\|_{L^2_y}\leq\min\{1,|k|\}\|f(0)\|^2_{L^2_y}.
\end{align*}
Inserting the above two estimates into \eqref{pay kpsiL2L2}, we arrive at
\begin{align*}
&\int^t_0(|k|+| k|^2)\|(\pa_y, k)\psi(s)\|^2_{L^2_y} ds\\
\lesssim &\|f(0)\|^2_{L^2_y}+\|f(t)\|^2_{L^2_y}
+\nu^{-1} \int^t_0\|(\pa_y, k)(\pa_y^2-k^2)^{-1}g\|^2_{L^2_y}ds+ \nu\int^t_0 \|(\pa_y, k)f\|^2_{L^2_y}ds.
\end{align*}
Integrating with $k$ and using Plancherel's formula, we then use \eqref{f energy norm} to obtain
\begin{align*}
\|\mathcal{M}_1 \na \psi(s)\|^2_{L^2L^2}
\lesssim &\|f(0)\|^2_{L^2}+\|f(t)\|^2_{L^2}+ \nu \|\na f\|^2_{L^2L^2}+\nu^{-1} \|\na \De^{-1}g\|^2_{L^2L^2}\\
\lesssim & \|f(0)\|^2_{L^2}+\nu^{-1} \|\na \De^{-1}g\|^2_{L^2L^2},
\end{align*}
 which together with \eqref{f energy norm} completes the proof of Lemma \ref{lemma: inviscid damping}.
\end{proof}
\begin{proposition}\label{prop: fX estimate}
Let $f$ be the solution of \eqref{equ: fg} for $t\in [t_0,t_1]$. For $X$ defined in \eqref{f tildeXnorm}, there exists a suitable small constant $\epsilon$ such that
\begin{align}
\|f(t)\|_{X}\leq &C\|f(t_0)\|_{L^2}+C\| g\|_{L^1L^2},\label{esti: ftildeX} \\
\nu^{\frac13}\|\mathcal{M} f\|_{L^1 L^2}\leq &C_{\epsilon}\nu^{-\epsilon}\|f(t_0)\|_{L^2}+C\|g\|_{L^1 L^2},\label{f enhance}
\end{align}
Let $X_{\theta}$-norm be defined in \eqref{fXk} with $\theta\geq 0$. For $t_0\geq \nu^{-1-\epsilon}$, we have
\begin{align}
\|f\|_{X_{\theta}}\leq &C\|(1+\nu^{\frac13} t_0 \mathcal{M})^{\theta}f(t_0)\|_{L^2}+C\|(1+ \nu^{\frac13}t\mathcal{M})^{\theta}g\|_{L^1 L^2},\label{fX large time}\\
\nu^{\frac13}\|\mathcal{M}(1+ \nu^{\frac13}t\mathcal{M})^{\theta}f\|_{L^1L^2}\leq &C\|(1+\nu^{\frac13} t_0 \mathcal{M})^{\theta}f(t_0)\|_{L^2}+C\|(1+ \nu^{\frac13}t\mathcal{M})^{\theta}g\|_{L^1 L^2}.\label{esti: nu1/3M f}
\end{align}
\end{proposition}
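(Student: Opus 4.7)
The four estimates will follow by combining Duhamel's principle
\[
f(t) = S(t,t_0) f(t_0) + \int_{t_0}^{t} S(t,s) g(s)\,ds,
\]
where $S(t,s)$ denotes the solution operator of the homogeneous problem \eqref{equ: fk}, with the pointwise-in-$t$ bounds from Lemmas \ref{prop: f esti} and \ref{lemma: inviscid damping}. The plan is to prove the four estimates in order, leveraging progressively stronger ingredients.

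For \eqref{esti: ftildeX}, the first three components of $\|f\|_X$ form exactly the $Y$-norm of Lemma \ref{lemma: inviscid damping}. Applying that lemma with zero source to each Duhamel summand $S(t,s) g(s)$ and then invoking Minkowski's inequality in time yields $\|f\|_Y \leq C\|f(t_0)\|_{L^2}+C\|g\|_{L^1 L^2}$. The remaining piece $\nu^{\frac12}\|\pa_x f\|_{L^1 L^2}$ is handled using the enhanced dissipation \eqref{enhan3}: $\|\pa_x S(t,s) h\|_{L^2}\lesssim (\nu(t-s))^{-1/2}(1+t-s)^{-1}\|h\|_{L^2}$, so the time integral multiplied by $\nu^{\frac12}$ is uniformly bounded. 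Duhamel plus Minkowski then close this piece too.

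For \eqref{f enhance}, I repeat the scheme with \eqref{enhan4} in place of \eqref{enhan3}: $\|\mathcal{M} S(t,s) h\|_{L^2}\lesssim \nu^{-\frac13}(1+t-s)^{-1}(1+\nu(t-s))^{-\frac16}\|h\|_{L^2}$. The prefactor $\nu^{\frac13}$ cancels the $\nu^{-\frac13}$, leaving the time integral $\int_0^{\infty}(1+\tau)^{-1}(1+\nu\tau)^{-\frac16}\,d\tau$. Splitting at $\tau=\nu^{-1}$ shows that this integral is of size $\log(1/\nu)+O(1)\leq C_\epsilon \nu^{-\epsilon}$, which is exactly the loss exhibited on the initial-data term of \eqref{f enhance}; the source term is converted to $\|g\|_{L^1 L^2}$ by the same Minkowski argument (with the constant $C$ absorbing a log factor).

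For the weighted estimates \eqref{fX large time} and \eqref{esti: nu1/3M f}, I set $F(t,k,y)=W(t,k)f(t,k,y)$ with $W(t,k)=(1+\nu^{\frac13}t\mathcal{M}(k))^\theta$. Since $W$ depends only on $(t,k)$, it commutes with $\pa_y^2$, $k^2$ and $\pa_x$, so $F$ solves
\[
\pa_t F -\nu(\pa_y^2 - k^2) F + ikyF = Wg + (\pa_t W)f + [iky,W]f,
\]
with $[iky,W] = k\pa_k W$ acting as a bounded Fourier multiplier of size $\theta\,\nu^{\frac13}t\mathcal{M}(k)\cdot W^{-1/\theta}W$, nontrivial only on $|k|\leq 1$. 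Applying the unweighted bounds \eqref{esti: ftildeX} and \eqref{f enhance} to $F$ reduces matters to absorbing the two perturbative source terms $(\pa_t W) f$ and $[iky, W] f$. Both are pointwise controlled by $\theta \,\nu^{\frac13}\mathcal{M} F$, which is the quantity bounded in \eqref{esti: nu1/3M f}, and the smallness $\theta<\frac{1}{16}$ is used to absorb them.

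The main obstacle is precisely this absorption in the weighted case. The hypothesis $t_0\geq \nu^{-1-\epsilon}$ is essential: it guarantees $\nu^{\frac13}t_0\mathcal{M}(k)\geq \nu^{-\epsilon}\mathcal{M}(k)$ is large on the support of $\mathcal{M}$, so the ratio $W(t,k)/W(t_0,k)$ is controlled by a constant multiple of $(t/t_0)^\theta$ and the weight does not generate an extra $\nu^{-\epsilon}$ factor on the inhomogeneous term. Without this restriction, the $\log(1/\nu)$ factor from \eqref{f enhance} would propagate into the $g$-side of \eqref{fX large time}, breaking the clean constant $C$ needed when \eqref{fX large time}--\eqref{esti: nu1/3M f} are iterated over the dyadic intervals $(T_{j-1},T_j]$ in the analysis of $\om_{2,j}$.
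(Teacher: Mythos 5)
Your overall strategy coincides with the paper's: Duhamel's formula with the homogeneous solution operator $S(t,s)$, the bounds of Lemmas \ref{prop: f esti} and \ref{lemma: inviscid damping} for \eqref{esti: ftildeX}--\eqref{f enhance}, and conjugation by the weight followed by an absorption argument for \eqref{fX large time}--\eqref{esti: nu1/3M f}. Two of your steps, however, are wrong as written.

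First, the commutator $[iky,W]$ is zero, not $k\pa_k W$. The weight $W(t,k)=(1+\nu^{\frac13}t\mathcal{M}(k))^{\theta}$ is a Fourier multiplier in $x$ alone, and the Fourier transform is taken only in $x$; the variable $y$ is an independent physical coordinate, not the dual of $k$, so multiplication by $iky$ on $\hat f(t,k,y)$ commutes with $W(t,k)$. Your formula $[iky,W]=k\pa_kW$ would be correct only if $y$ acted as $i\pa_k$. This is not a harmless slip of notation: for $|k|\le\frac12$ one has $k\pa_kW=\tfrac23\,\theta\,\nu^{\frac13}t\,\mathcal{M}(1+\nu^{\frac13}t\mathcal{M})^{\theta-1}=\tfrac23\,t\,\pa_tW$, which carries an extra factor of $t$ relative to $\pa_tW$ and is of size comparable to $W$ itself; it is \emph{not} pointwise dominated by $\theta\nu^{\frac13}\mathcal{M}W$, so the quantity controlled in \eqref{esti: nu1/3M f} could not absorb it. Your argument closes only because the term you propose to absorb does not exist. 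Relatedly, the paper absorbs the genuine source $(\pa_tW)f$ by first working with the modified weight $(1+\sigma\nu^{\frac13}t\mathcal{M})^{\theta}$ and choosing $\sigma$ with $C\sigma\theta\le\frac12$, then converting back at the cost of $\sigma^{-\theta}$; invoking ``$\theta<\frac1{16}$'' does not by itself guarantee $C\theta\le\frac12$ for the implicit constant, and the proposition is stated for all $\theta\ge0$.

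Second, you have misidentified where the hypothesis $t_0\ge\nu^{-1-\epsilon}$ enters. Since $\nu^{\frac13}t_0\mathcal{M}(k)=\nu^{-\frac23-\epsilon}|k|^{\frac23}\to0$ as $k\to0$, the ratio $W(t,k)/W(t_0,k)$ is not controlled by $(t/t_0)^{\theta}$ uniformly in $k$, and in any case no weight ratio is used. The hypothesis is used in the \emph{unweighted} building block: for $t\ge\nu^{-1-\epsilon}$ one has $(1+\nu t)^{-\frac16}\le(\nu t)^{-\frac16}$, so the integral $\int_{t\ge\nu^{-1-\epsilon}}\nu^{-\frac13}(1+t)^{-1}(1+\nu t)^{-\frac16}\,dt$ converges to $O(\nu^{-\frac13})$ with no logarithm (cf.\ \eqref{St0hL1L2}--\eqref{paxStlarge}), which upgrades \eqref{f enhance} to the loss-free estimate \eqref{esti: fX+L1L2}. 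It is this clean bound on $\nu^{\frac13}\|\mathcal{M}\tilde f\|_{L^1L^2}$, on both the data and the source side, that makes the absorption of $(\pa_tW)f$ possible without degrading the constants required in the dyadic iteration. On the same point, your remark that for \eqref{f enhance} ``the constant $C$ absorb[s] a log factor'' on the source term is not legitimate, since $\log(1/\nu)$ is unbounded in $\nu$; the honest version of that step carries the $\nu^{-\epsilon}$ loss, which is precisely what the restriction $t_0\ge\nu^{-1-\epsilon}$ is there to remove in \eqref{fX large time}--\eqref{esti: nu1/3M f}.
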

\begin{proof}
\textbf{Step 1.} The estimate of homogeneous equation.

Let $S(t,s)$ be the solution operator of
\begin{equation*}
\left\{
\begin{aligned}
&\pa_t f-\nu\De f+y\pa_x f=0,\\
&f(t,x,\pm1)=0,\quad  f|_{t=s}=h(x,y).
\end{aligned}
\right.
\end{equation*}
For $t\geq s$, we have $f(t,x,y)=S(t,s) h(x,y)$. It follows from Lemma \ref{lemma: inviscid damping} with $g=0$ and $s=0$ that
\begin{align}\label{energy esti}
\|S(t,0)h\|_{Y} \leq C\|h\|_{L^2}.
\end{align}
Thanks to \eqref{enhan3} and \eqref{enhan2}, we have
\begin{align}
\int^{\infty}_0\|\pa_x S(t,0)h\|_{L^2} dt\lesssim& \int^{\infty}_0(\nu t)^{-\frac12}(1+t)^{-1}dt\|h\|_{L^2}\lesssim \nu^{-\frac12}\|h\|_{L^2},\label{enhan esti1}\\
\int^{\infty}_{0}\| S(t,0)h(k,y)\|_{L^2_y} dt\lesssim &\int^{\infty}_{0}(1+\nu k^2 t^3)^{-\frac12}dt\|h(k,y)\|_{L^2_y}
\lesssim \nu^{-\frac13}|k|^{-\frac23}\|h(k,y)\|_{L^2_y}.\label{enhan esti2}
\end{align}
By \eqref{enhan4} and
$$\nu^{-\frac13}(1+t)^{-1}(1+\nu t)^{-\frac16}\leq \nu^{-\frac13-\epsilon}(1+t)^{-1-\epsilon},$$
we deduce
\begin{align}
&\int^{\infty}_{0}\|\mathcal{M} S(t,0)h\|_{L^2} dt\lesssim \int^{\infty}_{0}\nu^{-\frac13-\epsilon}(1+t)^{-1-\epsilon}\|h\|_{L^2}dt\lesssim \epsilon^{-1}\nu^{-\frac13-\epsilon}\|h\|_{L^2},\label{pash1}\\
&\int^{\infty}_{\nu^{-1-\epsilon}}\|\mathcal{M}  S(t,0)h\|_{L^2} dt\lesssim \int^{\infty}_{\nu^{-1-\epsilon}}\nu^{-\frac13}(1+t)^{-1}(1+\nu t)^{-\frac16}\|h\|_{L^2}dt\lesssim   \nu^{-\frac13}\|h\|_{L^2}.\label{St0hL1L2}
\end{align}

For $t\geq s\geq 0$, it follows from \eqref{energy esti}--\eqref{pash1} that
\begin{equation}
\begin{aligned}\label{paxS(t,s)gL1L22}
\|S(t,s)h\|_{X}+ \nu^{\frac13}\Big\|\int^{\infty}_s\big\|\mathcal{M}  S(t,s) h\big\|_{L^2_y} dt\Big\|_{L^2_x}+\nu^{\frac13+\epsilon}\|\mathcal{M} S(t,s) h \|_{L^1L^2}\lesssim \|h\|_{L^2}.
\end{aligned}
\end{equation}
We use \eqref{energy esti}, \eqref{enhan esti1} and \eqref{St0hL1L2} to get
\begin{equation}
\begin{aligned}\label{paxStlarge}
\|S(t,s)h\|_{X}+\nu^{\frac13}\int^{\infty}_s\|\mathcal{M}  S(t,s) h\|_{L^2} dt\lesssim \|h\|_{L^2}, \quad s\geq \nu^{-1-\epsilon}.
\end{aligned}
\end{equation}

\textbf{Step 2.} The estimate of $f$.

Using the solution operator $S(t,s)$, the solution of \eqref{equ: fg} can be rewritten as
\begin{align}\label{solution of f}
f(t)=S(t,t_0)f(t_0)+\int^t_{t_0}S(t,s)g(s)ds.
\end{align}
Thanks to \eqref{paxS(t,s)gL1L22}, we directly obtain \eqref{esti: ftildeX} and \eqref{f enhance}.

For $t_0\geq \nu^{-1-\epsilon}$, we use \eqref{paxStlarge} to deduce
\begin{equation}\label{esti: fX+L1L2}
\begin{aligned}
\|f(t)\|_{X}+\nu^{\frac13}\|\mathcal{M} f\|_{L^1 L^2}
\lesssim  \|f(t_0)\|_{L^2}+\| g\|_{L^1L^2}.
\end{aligned}
\end{equation}
Let $\tilde{f}=(1+ \sigma\nu^{\frac13}t\mathcal{M})^{\theta}f$, where $\sigma\in(0,1)$ will be determined later. It satisfies
\begin{align*}
\pa_t \tilde{f}-\nu\De \tilde{f}+y\pa_x\tilde{f}=(1+\sigma \nu^{\frac13}t\mathcal{M})^{\theta} g+ \theta \sigma\nu^{\frac13}\mathcal{M} (1+\sigma \nu^{\frac13}t\mathcal{M})^{\theta-1}f.
\end{align*}
Noticing that
\begin{align*}
&\|\nu^{\frac13} \mathcal{M}(1+\sigma \nu^{\frac13}t\mathcal{M})^{\theta-1} f\|_{ L^2}=\|\nu^{\frac13} \mathcal{M}(1+\sigma \nu^{\frac13}t\mathcal{M})^{-1} \tilde{f}\|_{L^2}\\
=&\|\nu^{\frac13} \mathcal{M}(k) (1+\sigma \nu^{\frac13}t\mathcal{M}(k))^{-1}\|\tilde{f}\|_{L^2_y}\|_{L^2_k}
\leq \|\nu^{\frac13} \mathcal{M}(k) \|\tilde{f}\|_{L^2_y}\|_{L^2_k},
\end{align*}
we then use \eqref{esti: fX+L1L2} to get
\begin{align*}
&\|\tilde{f}\|_{X}+\nu^{\frac13}\|\mathcal{M} \tilde{f}\|_{L^1 L^2}\\
\leq &C(\|\tilde{f}(t_0)\|_{L^2}+\|(1+\sigma \nu^{\frac13}t\mathcal{M})^{\theta} g\|_{L^1 L^2}+\sigma\theta\|\nu^{\frac13} \mathcal{M}(1+\sigma \nu^{\frac13}t\mathcal{M})^{\theta-1} f\|_{L^1 L^2})\\
\leq &C(\|\tilde{f}(t_0)\|_{L^2}+\|(1+\sigma \nu^{\frac13}t\mathcal{M})^{\theta} g\|_{L^1 L^2}+\sigma\theta\nu^{\frac13}\| \mathcal{M}\tilde{f}\|_{L^1 L^2}).
\end{align*}
Choosing $C \sigma \theta\leq \frac12$, we obtain
\begin{align*}
\|\tilde{f}\|_{X}+ \nu^{\frac13}\|\mathcal{M} \tilde{f}\|_{L^1 L^2}\leq C\|\tilde{f}(t_0)\|_{L^2}+C\|(1+\sigma \nu^{\frac13}t\mathcal{M})^{\theta} g\|_{L^1 L^2}.
\end{align*}
Therefore, we have
\begin{align*}
&\|f\|_{X_{\theta}}+\nu^{\frac13}\|\mathcal{M}(1+ \nu^{\frac13}t\mathcal{M})^{\theta}f\|_{L^1L^2}
\leq \sigma^{-\theta}(\|\tilde{f}\|_{X}+\nu^{\frac13}\|\mathcal{M} \tilde{f}\|_{L^1 L^2})\\
\lesssim & \|(1+\nu^{\frac13} t_0 \mathcal{M})^{\theta}f(t_0)\|_{L^2}+\|(1+ \nu^{\frac13}t\mathcal{M})^{\theta} g\|_{L^1 L^2},
\end{align*}
which completes the proof of \eqref{fX large time} and \eqref{esti: nu1/3M f}.
\end{proof}
\section{The proof of Theorem \ref{Th1}}\label{sec:nonlinear}
In this section, we provide the estimates for the solution to the linear equation \eqref{linear} in Subsection \ref{sec:linear esti} and error equation \eqref{omerr} in Subsection \ref{sec: error}, respectively. Finally, we prove Theorem \ref{Th1} in Subsection \ref{sec:proof th1}.
\subsection{The estimates for the linear equation}\label{sec:linear esti}
Consider
\begin{equation}\label{linear}
\left\{
\begin{aligned}
&\pa_t\om_{L}+\nu(1-t^2\pa^2_x)\om_{L}+y\pa_x\om_{L}=0,\\
&\om_{L}\big|_{t=0}=\om^{in}(x,y),\quad \om_{L}(\pm1)=0.
\end{aligned}
\right.
\end{equation}
We have the following enhanced dissipation and inviscid damping estimates.
\begin{proposition}\label{Pro: linear}
Let $\om_{L}(t,x,y)$ be the solution of \eqref{linear} and $u_{L}=(\pa_y, -\pa_x)\De^{-1} \om_{L}$. For $s\geq 0$, there holds the weak enhanced dissipation estimates
\begin{align}
&\|(1+ \nu^{\frac13}t\mathcal{M})^{s}\om_{L}\|_{L^2}+ \|(1+\nu^{\frac13}t\mathcal{M})^{s}(\pa_y+t\pa_x)\om_{L}\|_{L^2}\leq C_s (1+\nu t^3)^{-\frac14}e^{-\nu t}E_0, \label{paxpayomL L2}\\
&\|(1+ \nu^{\frac13}t\mathcal{M})^{s}\pa_x\om_{L}\|_{L^2}\leq C_s (1+\nu t^3)^{-\frac34}e^{-\nu t}E_0,\label{paxomL2}\\
&\|(1+ \nu^{\frac13}t\mathcal{M})^{s}\pa_x\om_{L}\|_{L^\infty}\leq C_s (1+\nu t^3)^{-1}e^{-\nu t}E_0, \label{omL Linfty}\\
&\|(1+ \nu^{\frac13}t\mathcal{M})^{s}(\pa_y+t\pa_x)\om_{L}\|_{L^\infty}\leq C_s (1+\nu t^3)^{-\frac12}e^{-\nu t}E_0, \label{paxomL Linfty}\\
&\|(1+ \nu^{\frac13}t\mathcal{M})^{s}\pa^2_x\om_{L}\|_{L^\infty}\leq C_s (\nu t^3)^{-\frac13}(1+\nu t^3)^{-\frac76}e^{-\nu t}E_0, \label{paxxxomL Linfty}
\end{align}
and the inviscid damping estimates
\begin{align}
&\|(1+ \nu^{\frac13}t\mathcal{M})^{s}u^{(1)}_L\|_{L^\infty}+\|(1+ \nu^{\frac13}t\mathcal{M})^{s}(u^{(1)}_L-tu^{(2)}_L)\|_{L^\infty}\leq C_s (1+t)^{-1}e^{-\nu t}E_0,\label{u1LLinfty}\\
&\|(1+ \nu^{\frac13}t\mathcal{M})^{s}u^{(2)}_{L}\|_{L^\infty}\leq C_s(1+t)^{-2}\ln(1+t)e^{-\nu t}E_0,\label{u12L22}
\end{align}
and the error estimate
\begin{equation} \label{ErL2}
\begin{aligned}
&\|(1+ \nu^{\frac13}t\mathcal{M})^{s}E_r\|_{L^2}\\
\leq &C_s \Big((1+t)^{-1}(1+\nu t^3)^{-\frac14} E_0 +\nu(1+\nu t^3)^{-\frac14}+\nu^{\frac23}(1+\nu t^3)^{-\frac{5}{12}}\Big)e^{-\nu t}E_0.
\end{aligned}
\end{equation}
\end{proposition}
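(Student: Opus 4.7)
The plan hinges on the explicit Fourier representation of $\om_L$ that the introduction already derives via the change of variables $\widetilde{\om}_L(t,x,y)=\om_L(t,x+ty,y)$, namely
\[
\hat{\om}_L(t,k,y)=e^{-ikty}\,e^{-\nu t-\frac{1}{3}\nu k^2 t^3}\,\hat{\om}^{in}(k,y).
\]
A short computation further shows that $(\pa_y+t\pa_x)\om_L$ has Fourier transform $e^{-ikty}e^{-\nu t-\frac{1}{3}\nu k^2 t^3}\pa_y\hat{\om}^{in}(k,y)$, so $\pa_y+t\pa_x$ acts on $\om_L$ as a ``free'' $\pa_y$-differentiation on the initial data. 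On the weight side, for $|k|\leq 1/2$ we have $\nu^{\frac{1}{3}}t\mathcal{M}(k)=(\nu k^2 t^3)^{1/3}$, hence $(1+\nu^{\frac{1}{3}}t\mathcal{M}(k))^{2s}e^{-\frac{2}{3}\nu k^2 t^3}\lesssim_{s} 1$ uniformly in $(t,k)$; for $|k|\geq 1$ the weight $(1+\nu^{\frac{1}{3}}t)^{2s}$ is swallowed by $e^{-\frac{2}{3}\nu t^3}$. Hence every weighted estimate reduces to an unweighted one with a slightly smaller Gaussian constant, and the task is to extract decay from $e^{-\nu k^2 t^3}$ alone.

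With this in hand, the $L^2$ bounds \eqref{paxpayomL L2}--\eqref{paxomL2} follow from Plancherel in $x$, the elementary bound $|\hat{\om}^{in}(k,y)|\leq\|\om^{in}(\cdot,y)\|_{L^1_x}$, the Gaussian moments $\int|k|^{2j}e^{-\frac{2}{3}\nu k^2 t^3}dk\lesssim(1+\nu t^3)^{-(2j+1)/2}$, and Sobolev in $y$; the $(\pa_y+t\pa_x)$-version simply uses $\pa_y\hat{\om}^{in}$ in place of $\hat{\om}^{in}$. The $L^\infty$ bounds \eqref{omL Linfty}--\eqref{paxxxomL Linfty} follow from Hausdorff--Young $\|f\|_{L^\infty_x}\leq\|\hat f\|_{L^1_k}$ together with $H^1_y\hookrightarrow L^\infty_y$; for the somewhat delicate estimate \eqref{paxxxomL Linfty}, I split frequencies at $K\sim(\nu t^3)^{-1/3}$, using $\|\om^{in}\|_{L^1_xH^1_y}$ against the Gaussian for $|k|\leq K$ and Cauchy--Schwarz against $\pa_x^3\om^{in}$ for $|k|>K$, which produces the profile $(\nu t^3)^{-1/3}(1+\nu t^3)^{-7/6}$.

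For the inviscid damping \eqref{u1LLinfty}--\eqref{u12L22}, I represent $\hat{\phi}_L(t,k,y)=-\int G_k(y,y')\hat{\om}_L(t,k,y')\,dy'$ through the Dirichlet Green's function of $\pa_y^2-k^2$ on $[-1,1]$ and use the oscillation $e^{-ikty'}$ inside $\hat{\om}_L$. One integration by parts in $y'$ replaces $e^{-ikty'}$ by $(ikt)^{-1}\pa_{y'}$, which lands on $\hat{\om}^{in}$ or on $G_k$ (with a jump at $y'=y$ handled explicitly), giving the $(1+t)^{-1}e^{-\nu t}$ decay for $u_L^{(1)}=\pa_y\phi_L$ after integrating $|k|$ against the Gaussian. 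A second integration by parts yields $(1+t)^{-2}$ for $u_L^{(2)}=-\pa_x\phi_L$, the logarithm $\ln(1+t)$ coming from the low-frequency regime of $\int\min(1,|kt|^{-2})|k|\,dk$. The combination $u_L^{(1)}-tu_L^{(2)}$ enjoys the same rate because $(\pa_y+t\pa_x)\phi_L$ in Fourier picks up the same integration-by-parts gain as $u_L^{(1)}$.

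Finally, for the error \eqref{ErL2}, I use the algebraic identity $t^2\pa_x^2-\pa_y^2=-(\pa_y-t\pa_x)(\pa_y+t\pa_x)$ together with \eqref{linear} to write
\[
E_{r_L}=-\nu\om_L-\nu(\pa_y-t\pa_x)(\pa_y+t\pa_x)\om_L-\nu\pa_x^2\om_L,
\]
whose three terms produce the $\nu(1+\nu t^3)^{-1/4}$ and $\nu^{2/3}(1+\nu t^3)^{-5/12}$ contributions via the Fourier estimates of the previous paragraphs (the $\nu^{2/3}$ coming from re-balancing two factors of $\pa_x$ against the Gaussian); the remaining term $u_L\cdot\na\om_L$ I rewrite as $(u_L^{(1)}-tu_L^{(2)})\pa_x\om_L+u_L^{(2)}(\pa_y+t\pa_x)\om_L$ and pair the pointwise inviscid damping with the $L^2$ decay already proved, producing the $(1+t)^{-1}(1+\nu t^3)^{-1/4}E_0$ contribution. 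The main obstacle I anticipate is the Green's-function argument for inviscid damping: the boundary terms at $y'=\pm 1$ (where $e^{-ikty'}$ does not vanish) and the jump of $\pa_{y'}G_k$ at $y'=y$ must be tracked without losing powers of $k$ at low frequency, which is what secures the sharp $(1+t)^{-2}\ln(1+t)$ rate for $u_L^{(2)}$ and, through it, the overall polynomial decay of $E_r$.
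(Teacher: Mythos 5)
Most of your proposal tracks the paper's own proof closely: the explicit formula $\hat\om_L=e^{-ikty}e^{-\nu t-\frac13\nu k^2t^3}\hat\om^{in}$, the absorption of the weight $(1+\nu^{1/3}t\mathcal M)^{s}$ into a fraction of the Gaussian, the split at $|k|=1$ pairing $\|\hat\om^{in}\|_{L^\infty_k}$ (from $L^1_x$) with Gaussian moments at low frequency and $L^2_k$ at high frequency, the identity $E_{r_L}=-\nu\om_L-\nu(\pa_y-t\pa_x)(\pa_y+t\pa_x)\om_L-\nu\pa_x^2\om_L$ (equivalent to the paper's $e^{ikyt}\widehat E_{r_L}=-\nu(\pa_y^2-2ikt\pa_y-k^2+1)(e^{ikyt}w^L_k)$), and the rewriting $u_L\cdot\na\om_L=(u^{(1)}_L-tu^{(2)}_L)\pa_x\om_L+u^{(2)}_L(\pa_y+t\pa_x)\om_L$ are all exactly what the paper does, and your frequency split at $K\sim(\nu t^3)^{-1/3}$ for \eqref{paxxxomL Linfty} is an acceptable variant of the paper's "minimum of two bounds" argument.

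The gap is in the inviscid damping estimate \eqref{u1LLinfty}. Your plan — Green's function of $\pa_y^2-k^2$ plus integration by parts in $y'$ against $e^{-ikty'}$ — gives, at fixed $k$, a bound of order $\langle kt\rangle^{-1}$ for $\|\hat u^{(1)}_L(t,k,\cdot)\|_{L^\infty_y}$ (one cannot integrate by parts twice for $\pa_y\phi_L$ because the second derivative of $\pa_yG_k$ produces $\pa_{y'}\delta(y-y')$, which returns a factor $kt$ and cancels the gain). Since the $L^1_x$ hypothesis only yields $\sup_k\|\hat\om^{in}(k,\cdot)\|_{L^\infty_y}\leq\|\om^{in}\|_{L^1_xL^\infty_y}$ with no decay in $k$, summing over frequencies costs
\begin{equation*}
\int_{|k|\leq 1}\langle kt\rangle^{-1}\,dk\;\sim\;t^{-1}\ln(1+t),
\end{equation*}
so your argument proves $\|u^{(1)}_L\|_{L^\infty}\lesssim (1+t)^{-1}\ln(1+t)e^{-\nu t}E_0$, not the stated log-free rate (the same loss afflicts $u^{(1)}_L-tu^{(2)}_L$; for $u^{(2)}_L$ the logarithm is allowed by \eqref{u12L22}, so that part is fine). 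The paper avoids this by working in physical space rather than frequency by frequency: it introduces $\phi_1$ with $\pa_y^2\phi_1=\om_L$, $\phi_1(\pm1)=0$, so that $\|\pa_y\phi_1\|_{L^\infty_y}\leq\|\om_L(x,\cdot)\|_{L^1_y}=\int_{-1}^1|\widetilde\om_L(t,x-ty,y)|\,dy$, and the change of variables $m=x-ty$ (Jacobian $t^{-1}$) converts the $y$-integral into $t^{-1}\|\widetilde\om_L\|_{L^1_xL^\infty_y}$ with no logarithm; the correction $\phi_L-\phi_1$ solves $\pa_y^2(\phi_1-\phi_L)=\pa_x^2\phi_L$ and is controlled at rate $(1+t)^{-2}$ by the elliptic estimates of the appendix. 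You would need either this decomposition (or some other mechanism that exploits $L^1_x$ beyond its $L^\infty_k$ consequence) to recover \eqref{u1LLinfty}; as written, the logarithm also propagates into the first term of your bound for $E_r$ via $(u^{(1)}_L-tu^{(2)}_L)\pa_x\om_L$, so \eqref{ErL2} would not be obtained in the stated form either.
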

\begin{proof}
Let $w^L_k= \int_{\R} e^{-ikx} \om_{L}dx$ be the solution of \eqref{linear Fourier}, satisfying the estimates in Lemma \ref{lemma:wL estimate} with $A_j=\|(\pa_y,k)^jw^{in}_{k}\|_{L^2_y}$.
We then divide the proof into three steps.
In the following, we repeatedly use
\begin{align*}
&\int_{|k|\leq 1}|k|^m e^{-c\nu |k|^{2}t^3}dk\leq \min\{1, (\nu t^3)^{-\frac{m+1}{2}}\}\leq C(1+\nu t^3)^{-\frac{m+1}{2}},\quad m\geq 0.
\end{align*}

\noindent \textbf{Step 1.} The proof of \eqref{paxpayomL L2}--\eqref{paxxxomL Linfty}.
It is easy to get that
\begin{align*}
&\om_{L}=\int_{\R} w^{L}_{k} e^{ikx}dk, \quad \pa_x\om_{L}=\int_{\R} ikw^{L}_k e^{ikx} dk =\int_{\R} ik(w^L_k e^{ikyt})e^{ik(x-y t)}dk,\\
&(\pa_y+t\pa_x)\om_{L}=\int_{\R}(\pa_y+ikt)w^L_k e^{ikx}dk=\int_{\R}\pa_y(e^{ikyt}w^L_{k}) e^{ik(x-yt)}dk.
\end{align*}
By \eqref{kneq0 wL2} and $\mathcal{M}(k)\leq 3|k|^{\frac23}$, we obtain
\begin{align*}
 &\|(1+ \nu^{\frac13}t\mathcal{M})^{s}\om_{L}\|^2_{L^2}+\|(1+ \nu^{\frac13}t\mathcal{M})^{s}(\pa_y+t\pa_x)\om_{L}\|^2_{L^2}\\
 \leq & 3\int_{\R}(1+ \nu^{\frac13}|k|^{\frac23} t)^{2s} \|(\pa_y,1)(e^{ikyt}w^L_k)\|^2_{L^2_y} dk\\
\leq & C_s\int_{|k|\leq 1} e^{-2\nu t-\frac13\nu k^2 t^3}|A_1(k)|^2 dk+ \int_{|k|\geq 1} e^{-2\nu t-\frac13\nu k^2 t^3}|A_1(k)|^2 dk\\
\leq &C_s(1+\nu t^3)^{-\frac12}e^{-2\nu t} (\|A_1(k)\|^2_{L^\infty_k(|k|\leq 1)}+\|A_1(k)\|^2_{L^2_k(|k|\geq 1)})\leq C_s(1+\nu t^3)^{-\frac12} e^{-2\nu t} E^2_0.
\end{align*}
Similarly, we also have
\begin{align*}
\|(1+ \nu^{\frac13}t\mathcal{M})^{s}\pa_x\om_{L}\|_{L^2}
\leq &C_s(1+\nu t^3)^{-\frac34}e^{-\nu t} E_0.
\end{align*}

Next, we consider the estimates in $L^\infty$ setting. It follows from \eqref{kneq0 wLinfty} that
\begin{align*}
\|(1+\nu^{\frac13}t\mathcal{M})^{s}\pa_x\om_{L}\|_{L^\infty}
\leq &\int_{\R} (1+ \nu^{\frac13}t\mathcal{M})^{s}|k|\| (e^{ik yt} w^L_k)\|_{L^\infty_y} d k\\
\leq& C_s\int_{\R} |k|e^{-\nu t-\frac16\nu k^2 t^3} A_1(k) d k
\leq  C_s (1+\nu t^{3})^{-1}e^{-\nu t} E_0.
\end{align*}
In a similar way, there holds
\begin{align*}
\|(1+\nu^{\frac13}t\mathcal{M})^{s}(\pa_y+t\pa_x)\om_{L}\|_{L^\infty}
\leq &C_s (1+\nu t^3)^{-\frac12}e^{-\nu t} E_0.
\end{align*}

For \eqref{paxxxomL Linfty},  thanks to \eqref{kneq0 wLinfty}, we have
\begin{equation}\label{paxxomLinfty}
\begin{aligned}
&\|(1+ \nu^{\frac13}t\mathcal{M})^{s}\pa^2_x\om_{L}\|_{L^\infty}\leq C_s\int_{\R} \|k^2 (e^{ik yt} w^L_k)\|_{L^\infty_y} d k\\
\leq &C_se^{-\nu t}\Big(\int_{|k|\leq 1}|k|^2e^{-\frac13 \nu k^2 t^3}A_1 dk+\int_{|k|\geq 1} |k|^{2}e^{-\frac13 \nu k^2 t^3}|k|^{-\frac12}A_1dk\Big).
\end{aligned}
\end{equation}
For $|k|\leq 1$, we use $|k|^{\frac23}e^{-\frac13\nu k^2 t^3}\leq C(\nu t^3)^{-\frac13}$ and $|k|^2e^{-\frac16\nu |k|^2 t^3}\leq C(\nu t^3)^{-1}$ to obtain
\begin{equation}\label{paxxom1}
\begin{aligned}
\int_{|k|\leq 1}|k|^2e^{-\frac13 \nu k^2 t^3} A_1dk\leq &C(\nu t^3)^{-\frac13}\|A_1\|_{L^2_k},\\
\int_{|k|\leq 1}|k|^2e^{-\frac13 \nu k^2 t^3}A_1 dk\leq & C (\nu t^3)^{-1} \int_{|k|\leq 1}e^{-\frac16 \nu k^2 t^3}A_1 dk\leq C(\nu t^3)^{-\frac32}\|A_1\|_{L^\infty_k}.
\end{aligned}
\end{equation}
For $|k|\geq 1$, due to
$$ e^{-\frac13 \nu |k|^2 t^3}\leq C(\nu k^2 t^3)^{-\frac13},\quad  |k|^2e^{-\frac13 \nu |k|^2 t^3}\leq C(\nu t^3)^{-1}e^{-\frac16 \nu k^2 t^3}\leq C(\nu t^3)^{-\frac32},$$
we deduce
\begin{equation}\label{paxxom2}
\begin{aligned}
\int_{|k|\geq 1}|k|^2e^{-\frac13\nu |k|^2 t^3}|k|^{-\frac12} A_1 dk\leq& C \int_{|k|\geq 1} (\nu k^2 t^3)^{-\frac13}|k|^{-\frac12}A_3 dk\leq C(\nu t^3)^{-\frac13}\|A_3\|_{L^2_k},\\
\int_{|k|\geq 1}|k|^2e^{-\frac13\nu |k|^2 t^3} |k|^{-\frac12}A_1 dk
\leq&  C (\nu t^3)^{-\frac32} \|A_3\|_{L^2_k}.
\end{aligned}
\end{equation}
Inserting \eqref{paxxom1} and \eqref{paxxom2} into \eqref{paxxomLinfty}, we get \eqref{paxxxomL Linfty}.

\textbf{Step 2.} The proof of \eqref{u1LLinfty} and \eqref{u12L22}.

Let $\widetilde{\om}_{L}(t,x,y)=\om_{L}(t,x+ty,y)$. It follows from \eqref{linear} that
\begin{align*}
(\pa_t+\nu(1-t^2\pa^2_x))\widetilde{\om}_{L}=0, \quad \widetilde{\om}_{L}\big|_{t=0}=\om^{in}(x,y), \quad \widetilde{\om}_{L}(t,x\mp t,\pm1)=0.
\end{align*}
The solution can be written as
\begin{align*}
\widetilde{\om}_{L}=e^{-\nu t+\frac13\nu t^3\pa^2_x} \om^{in}.
\end{align*}
Using the heat kernel estimate, we obtain
\begin{align}\label{omLxy}
\|(1+ \nu^{\frac13}t\mathcal{M})^{s}\widetilde{\om}_{L}\|_{L^1_xL^\infty_y}
\leq C_s e^{-\nu t}\|\om^{in}\|_{L^1_x L^\infty_y}.
\end{align}
Let $(1+ \nu^{\frac13}t\mathcal{M})^{s}\phi_1$ be the solution of $\pa^2_y (1+\nu^{\frac13}t\mathcal{M})^{s} \phi_1=(1+ \nu^{\frac13}t\mathcal{M})^{s}\om_{L}$ with $\phi_1(\pm1)=0.$ By solving the ODE, we obtain
\begin{align*}
(1+ \nu^{\frac13}t\mathcal{M})^{s}\phi_1(y)
=&-\frac12\int^y_{-1}(1-y)(1+y') (1+ \nu^{\frac13}t\mathcal{M})^{s}\om_{L} dy'\\
&-\frac{1}{2}\int^1_y (1+y)(1-y')(1+ \nu^{\frac13}t\mathcal{M})^{s}\om_{L} dy'.
\end{align*}
It follows that
\begin{equation}\label{phi1Linfty}
\begin{aligned}
&\|\pa_y(1+ \nu^{\frac13}t\mathcal{M})^{s}\phi_1\|_{L^\infty_y}\\
\leq& \|(1+ \nu^{\frac13}t\mathcal{M})^{s}\om_{L}\|_{L^1_y}= \int^{1}_{-1}|(1+ \nu^{\frac13}t\mathcal{M})^{s}(\tilde{\om}_{L}(t,x-ty,y))|dy\\
\leq&C_st^{-1}\int_{\R}|(1+\nu^{\frac13}t\mathcal{M})^{s}\tilde{\om}_{L}(t,m,y)|dm
 \leq C_st^{-1}e^{-\nu t}\|\om^{in}\|_{L^1_x L^\infty_y},
\end{aligned}
\end{equation}
where in the last inequality we used \eqref{omLxy}.

Due to $\De \phi_L=\om_{L}$, we get
\begin{align*}
\pa^2_y (1+ \nu^{\frac13}t\mathcal{M})^{s}(\phi_1-\phi_{L})=(1+ \nu^{\frac13}t\mathcal{M})^{s}\pa^2_x\phi_{L}.
\end{align*}
Let $\psi_{L}=\int_{\R}e^{-ikx} \phi_Ldx$. Using \eqref{k>1, psiL2}, \eqref{esti4 psiL2}, \eqref{kneq0 wL2} and $\mathcal{M}(k)\leq 3|k|^{\frac23}$, we obtain
\begin{equation*}
\begin{aligned}
&\|\pa_y(1+ \nu^{\frac13}t\mathcal{M})^{s}(\phi_1-\phi_{L})\|_{L^\infty_y}
\leq  \|(1+ \nu^{\frac13}t\mathcal{M})^{s}\pa^2_x\phi_{L}\|_{L^1_y}\\
\leq &3\int^1_{-1}\Big|\int_{\R} (1+ \nu^{\frac13}|k|^{\frac23} t)^{s}e^{ikx} k^2\psi_{L} dk\Big|dy\leq 6\int_{\R} (1+ \nu^{\frac13}|k|^{\frac23} t)^{s}|k|^2\|\psi_{L}\|_{L^2_y}dk\\
\leq &C_s\int_{|k|\leq 1} \frac{|k|^2e^{-\nu t}}{(1+kt)^2}A_2(k)dk+C_s\int_{|k|\geq 1}(1+t)^{-2}e^{-\nu t}|k|^{-2}A_2(k)dk\\
\leq &C_s(1+t)^{-2}e^{-\nu t}\|A_2(k)\|_{L^2_k},
\end{aligned}
\end{equation*}
which together with \eqref{phi1Linfty} implies
\begin{align}\label{payphiLinfty}
\|(1+ \nu^{\frac13}t\mathcal{M})^{s}\pa_y \phi_{L}\|_{L^\infty}\leq C_st^{-1}e^{-\nu t}(\|\om^{in}\|_{L^1_x L^\infty_y}+\|\om^{in}\|_{H^2}).
\end{align}
On the other hand, due to the interpolation inequality
\begin{align*}
\|e^{ikty}\pa_y\psi_{L}\|_{L^\infty_y}\leq \|\pa_y(e^{ikty}\pa_y\psi_{L})\|^{\frac12}_{L^2_y}\|\pa_y\psi_{L}\|^{\frac12}_{L^2_y}\leq \|\pa_y(\pa_y+ikt)\psi_{L}\|^{\frac12}_{L^2_y}\|\pa_y\psi_{L}\|^{\frac12}_{L^2_y},
\end{align*}
we use \eqref{psiL2k>1}, \eqref{paypay+kpsiL2}, \eqref{psiL2k<1}, \eqref{esti3 paypaypsi+xiytpsi} and \eqref{kneq0 wL2} to obtain
\begin{align*}
\|(1+ \nu^{\frac13}t\mathcal{M})^{s}\pa_y\phi_{L}\|_{L^\infty}\leq & 3\int_{\R} (1+ \nu^{\frac13}|k|^{\frac23} t)^{s}\|e^{ikty}\pa_y\psi_{L}\|_{L^\infty_y}dk\\
\leq & 3 \int_{\R} (1+ \nu^{\frac13}|k|^{\frac23} t)^{s}\|\pa_y(\pa_y+ikt)\psi_{L}\|^{\frac12}_{L^2_y}\|\pa_y\psi_{L}\|^{\frac12}_{L^2_y}dk\\
\leq &C_s\int_{|k|\leq 1}e^{-\nu t} A_2 dk+C_s\int_{|k|\geq 1} e^{-\nu t}|k|^{-2}A_2 dk\leq C_s e^{-\nu t}E_0.
\end{align*}
This inequality together with \eqref{payphiLinfty} implies
\begin{align}\label{payphi}
\|(1+ \nu^{\frac13}t\mathcal{M})^{s}\pa_y \phi_{L}\|_{L^\infty}\leq C_s(1+t)^{-1}e^{-\nu t}E_0.
\end{align}
In a similar way, the estimate of $\|(1+ \nu^{\frac13}t\mathcal{M})^{s}(\pa_y+t\pa_x)\phi_{L}\|_{L^\infty}$ can be obtained by taking $\pa^2_y (1+ \nu^{\frac13}t\mathcal{M})^{s} (\pa_y+t\pa_x)\phi_1=(1+ \nu^{\frac13}t\mathcal{M})^{s}(\pa_y+t\pa_x)\om_{L}$.

 By Fourier transform and interpolation inequality, there holds
\begin{align*}
\|(1+ \nu^{\frac13}t\mathcal{M})^{s}\pa_x\phi_{L}\|_{L^\infty_y}
\leq& 3\int_{\R} (1+ \nu^{\frac13}|k|^{\frac23} t)^{s}|k|\|e^{ikty}\psi_{L}\|_{L^\infty_y}dk\\
\leq& C\int_{\R} (1+ \nu^{\frac13}|k|^{\frac23} t)^{s}|k|\|(\pa_y,1)(e^{ikty}\psi_{L})\|_{L^2_y}dk.
\end{align*}
Then, using \eqref{xi>1 xi4pay+xitpsiL2}, \eqref{xi<1 pay+xitpsiL2} and \eqref{kneq0 wL2}, we deduce that
\begin{align*}
 &\int_{\R} (1+ \nu^{\frac13}|k|^{\frac23} t)^{s}|k|\|(\pa_y,1)(e^{ikty}\psi_{L})\|_{L^2_y}dk\\
\leq &C_se^{-\nu t}\Big(\int_{|k|\leq 1} \frac{|k|}{(1+kt)^2}A_3(k)dk+\int_{|k|\geq 1}(1+t)^{-2}|k|^{-3}A_3(k)dk\Big)\\
\leq &C_s(1+t)^{-2}\ln(1+t)e^{-\nu t}(\|A_3(k)\|_{L^\infty}+\|\om^{in}\|_{H^3}),
\end{align*}
which yields
\begin{align}\label{paxphiLinfty}
\|(1+ \nu^{\frac13}t\mathcal{M})^{s}\pa_x\phi_{L}\|_{L^\infty}\leq  C_s(1+t)^{-2}\ln(1+t)e^{-\nu t}E_0.
\end{align}
\textbf{Step 3.} The proof of \eqref{ErL2}.
Recall that
\begin{align*}
E_{r}=E_{r_{L}}+u_{L}\cdot\na \om_{L}=\pa_t\om_{L}-\nu \De\om_{L}+y\pa_x\om_{L}+u_{L}\cdot\na \om_{L}.
\end{align*}
Noticing that
\begin{align*}
u_{L}\cdot\na\om_{L}=u^{(1)}_{L}\pa_x\om_{L}+u^{(2)}_{L}\pa_y\om_{L}=(u^{(1)}_{L}-tu^{(2)}_{L})\pa_x\om_{L}+u^{(2)}_{L}(\pa_y+t\pa_x)\om_{L},
\end{align*}
we integrate in space and use \eqref{paxpayomL L2}, \eqref{paxomL2}, \eqref{u1LLinfty}, \eqref{u12L22} and Lemma \ref{lemma: fg esti}
to obtain
\begin{equation}\label{v1-tv2wlL2}
\begin{aligned}
&\|(1+\nu^{\frac13}t\mathcal{M})^{s}\big((u^{(1)}_{L}-tu^{(2)}_{L})\pa_x\om_{L}+u^{(2)}_{L}(\pa_y+t\pa_x)\om_{L}\big)\|_{L^2}\\
\leq & \|(1+\nu^{\frac13}t\mathcal{M})^{s}(u^{(1)}_{L}-tu^{(2)}_{L})\|_{L^\infty} \|(1+\nu^{\frac13}t\mathcal{M})^{s}\pa_x\om_{L}\|_{L^2}\\
&+\|(1+\nu^{\frac13}t\mathcal{M})^{s}u^{(2)}_{L}\|_{L^\infty} \|(1+\nu^{\frac13}t\mathcal{M})^{s}((\pa_y+t\pa_x)\om_{L})\|_{L^2}\\
\leq  &C_s(1+t)^{-1}(1+\nu t^3)^{-\frac34}e^{-\nu t} E^2_0+C_s(1+t)^{-2}\ln(1+t)(1+\nu t^3)^{-\frac14}e^{-\nu t} E^2_0\\
\leq  &C_s (1+t)^{-1}(1+\nu t^3)^{-\frac{1}{4}}e^{-\nu t} E^2_0.
\end{aligned}
\end{equation}

Thanks to \eqref{linear} and $E_{r_{L}}=(\pa_t-\nu \De+y\pa_x)\om_{L}$, we get
\begin{equation}\label{expre: Erl}
\widehat{E}_{r_{L}}=-\nu (1+t^2k^2)w^{L}_k-\nu(\pa^2_y-k^2)w^{L}_k,
\end{equation}
which implies
\begin{align*}
e^{ikyt} \widehat{E}_{r_L}=-\nu e^{ikyt}(1+\pa^2_y-k^2+ k^2t^2)w^L_k.
\end{align*}
Due to
\begin{align*}
\pa^2_y(e^{ikyt} w^L_k)
=k^2t^2e^{ikyt}w^L_k+2ikt\pa_y (e^{ikyt} w^L_k)+e^{ikyt}\pa_y^2 w^L_k,
\end{align*}
we obtain
\begin{align}\label{expre: ErL k>nu}
e^{ikyt} \widehat{E}_{r_L}=-\nu(\pa^2_y-2ikt\pa_y- k^2+1)(e^{ikyt} w^L_k),
\end{align}
which yields
\begin{equation}\label{esti: ErLk>1}
\begin{aligned}
&\|(1+\nu^{\frac13} t\mathcal{M}(k))^{s}\widehat{E}_{r_L}\|_{L^2_{k,y}}\\
\leq &3\|\nu(1+\nu^{\frac13}|k|^{\frac23} t)^{s}((\pa^2_y, k^2)(e^{ikyt }w^L_k)+ t k \pa_y(e^{ikyt} w^L_k))\|_{L^2_{k,y}}.
\end{aligned}
\end{equation}
Using Lemma \ref{lemma:wL estimate}, we obtain
\begin{align*}
&\| \nu(1+\nu^{\frac13}|k|^{\frac23} t)^{s}((\pa_y, k)^2(e^{ikyt }w^L_k)+ t k \pa_y(e^{ikyt} w^L_k))\|_{L^2_{k,y}}\\
\leq &\nu\Big(\int_{\R}|(1+\nu^{\frac13}|k|^{\frac23} t)^{s} e^{-\nu t-\frac13\nu k^2 t^3}A_2(k)|^2+ t|(1+\nu^{\frac13}|k|^{\frac23} t)^{s} k e^{-\nu t-\frac13\nu k^2 t^3}A_2(k)|^2dk\Big)^{\frac12}\\
\leq &C_s\nu e^{-\nu t}\Big(\int_{\R}| e^{-\frac16\nu k^2 t^3}A_2(k)|^2dk\Big)^{\frac12}+C_s\nu t e^{-\nu t} \Big(\int_{\R}| k e^{-\frac16\nu k^2 t^3}A_2(k)|^2dk\Big)^{\frac12} \\
\leq & C_s\nu e^{-\nu t}\big((1+\nu t^3)^{-\frac14}+t(1+\nu t^3)^{-\frac34}\big) \big(\|A_2(k)\|_{L^\infty(|k|\leq 1)}+ \|A_2(k)\|_{L^2(|k|\geq 1)}\big).
\end{align*}
Due to the fact that $\|A_2(k)\|_{L^\infty}+\|A_2(k)\|_{L^2}\leq 2E_0\leq 2c\nu^{\frac13}$ and
$$\nu t(1+\nu t^3)^{-\frac34}= \nu^{\frac23} \nu^{\frac13}t(1+\nu t^3)^{-\frac13}(1+\nu t^3)^{-\frac{5}{12}}\leq \nu^{\frac23}(1+\nu t^3)^{-\frac{5}{12}},$$
there holds
\begin{equation}\label{esti: ErL fina1}
\begin{aligned}
&\| \nu(1+\nu^{\frac13}|k|^{\frac23} t)^{s}((\pa_y, k)^2(e^{ikyt }w^L_k)+ t k \pa_y(e^{ikyt} w^L_k))\|_{L^2_{k,y}}\\
\leq &C_s (\nu(1+\nu t^3)^{-\frac14}+\nu^{\frac23}(1+\nu t^3)^{-\frac{5}{12}})e^{-\nu t}E_0.
\end{aligned}
\end{equation}
Inserting \eqref{esti: ErL fina1} into \eqref{esti: ErLk>1} and combining with \eqref{v1-tv2wlL2}, we finally obtain \eqref{ErL2}.
\end{proof}
\subsection{The estimate for $\om_{e}$}\label{sec: error}
 Recall the equation
\begin{equation}\label{omerr}
\left\{
\begin{aligned}
&\pat\om_{e}-\nu \De\om_{e}+y\pa_x\om_e+u\cdot\na\om_{e}+u_{e}\cdot\na\om_{L}+E_r=0,\\
&u_{e}=(\pa_y,-\pa_x)\phi_e, \,\phi_e=\De^{-1}\om_e,\\
&\om_{e}(t,x,\pm1)=0,\, \om_{e}\big|_{t=0}=0,
\end{aligned}
\right.
\end{equation}
where
\begin{align*}
E_{r}=E_{r_{L}}+u_{L}\cdot{\na}\om_{L}, \quad E_{r_{L}}=\pa_t \om_{L}-\nu\De \om_{L}+y\pa_{x}\om_{L}.
\end{align*}
In this subsection, we focus on the estimate $\|\om_e\|_{L^2}$. More precisely,
\begin{proposition}\label{Prop: esti ome}
Let $\om_e$ be the solution to \eqref{omerr}.
If the initial vorticity $\om^{in}$ satisfies the same condition as in Theorem \ref{Th1}, then for $\theta \in(0,\frac{1}{16})$ independent of $\nu$, there holds
\begin{align*}
\|(1+ \nu^{\frac13}t\mathcal{M})^{\theta}\om_e(t)\|_{L^2}\leq CE_0.
\end{align*}
\end{proposition}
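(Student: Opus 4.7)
The plan is to run a bootstrap on the full $X_{\theta}$-norm of $\om_e$, splitting the time axis at $T_0=\nu^{-\frac16}$ and using the decomposition $\om_e=\om_1+\om_2$ with $\om_2=\sum_{j\geq 1}\om_{2,j}$ from the introduction. The two inputs are Proposition \ref{prop: fX estimate}, which converts forcing estimates in weighted $L^1_tL^2$ into $X_{\theta}$-bounds, and the linear decay rates of Proposition \ref{Pro: linear}. Since the claimed estimate is just the $L^{\infty}_tL^2$ component of $\|\om_e\|_{X_{\theta}}$, controlling the whole $X_{\theta}$-norm suffices.

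On $[0,T_0]$ the weight $(1+\nu^{\frac13}t\mathcal{M})^{\theta}$ is uniformly bounded, so a direct application of \eqref{esti: ftildeX} to \eqref{omerr} with $g=-u\cdot\na\om_e-u_e\cdot\na\om_L-E_r$ yields $\|\om_e\|_{X_{\theta}([0,T_0])}\lesssim (\va+\va^2)E_0+\|E_r\|_{L^1L^2}$ with $\va:=CE_0\nu^{-\frac13}$ small by the smallness hypothesis: the transport is absorbed through the $\mathcal{M}_1\na\De^{-1}$ component of $X$ for $u$ paired with the $\na\om_e$ component, the reaction $u_e\cdot\na\om_L$ is controlled via Proposition \ref{Pro: linear}, and $\|E_r\|_{L^1L^2}\lesssim E_0$ follows from \eqref{ErL2}. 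For $t\geq T_0$, the $\om_1$ equation contains the \emph{good} reaction pair $u_e^{(1)}\pa_x\om_L+u_e^{(2)}(\pa_y+t\pa_x)\om_L$, whose factors enjoy the fast decay $(1+\nu t^3)^{-\frac34}$ and $(1+\nu t^3)^{-\frac12}$ by Proposition \ref{Pro: linear}, while $u_e$ provides inviscid damping through the $X$-norm. Together with $e^{-\nu t}$ and the mild growth allowed by $\theta<\tfrac{1}{16}$, the weighted forcing is integrable in time, and \eqref{fX large time} yields $\|\om_1\|_{X_{\theta}}\lesssim (1+\va)E_0$.

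The principal obstacle is the estimate for $\om_2$, whose forcing $-tu_e^{(2)}\pa_x\om_L$ fails to be uniformly in $L^1_tL^2$: the low-frequency behaviour of $\om_L$ degenerates because $e^{-\nu k^2 t^3}\to 1$ as $|k|\to 0$, so the prefactor $t$ is not compensated. This is circumvented by the dyadic decomposition $\om_2=\sum_{j\geq 1}\om_{2,j}$ of \eqref{def: chij}. On each interval $(T_{j-1},T_j]$ with $T_j=2^j\nu^{-\frac13}$, temporal localisation together with \eqref{paxomL2}--\eqref{omL Linfty} and the $X$-norm control of $u_e^{(2)}$ gives a geometric gain of the form $\|(1+\nu^{\frac13}t\mathcal{M})^{\theta}tu_e^{(2)}\pa_x\om_L\chi_j\|_{L^1L^2}\lesssim 2^{-\alpha j}\va E_0$ for some $\alpha=\alpha(\theta)>0$; the restriction $\theta<\tfrac{1}{16}$ is precisely what keeps $\alpha>0$ after balancing the weight loss against the $(1+\nu t^3)^{-3/4}$ decay. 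Applying Proposition \ref{prop: fX estimate} on $(T_{j-1},T_j]$ with zero data at $T_{j-1}$ and free evolution afterwards yields $\|\om_{2,j}\|_{X_{\theta}}\lesssim 2^{-\alpha j}\va E_0$; summing in $j$ via the infinite superposition principle produces $\|\om_2\|_{X_{\theta}}\lesssim \va E_0$. Combining with the bound on $\om_1$ and choosing $c$ small enough that $\va\ll 1$ closes the bootstrap and delivers the claimed $L^{\infty}_tL^2$ estimate.
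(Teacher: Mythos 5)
Your overall architecture matches the paper's (split at $T_0=\nu^{-\frac16}$, decomposition $\om_e=\om_1+\om_2$, dyadic splitting $\om_2=\sum_j\om_{2,j}$, bootstrap in $X_\theta$), but there is a genuine gap at the one step that carries the whole proof: the estimate of $\om_{2,j}$. You propose to bound $\|(1+\nu^{\frac13}t\mathcal{M})^{\theta}tu_e^{(2)}\pa_x\om_L\chi_j\|_{L^1L^2}$ directly and then apply the Duhamel-type estimate of Proposition \ref{prop: fX estimate}. This does not close: pairing $\|u_e^{(2)}\|_{L^2_tL^2}\lesssim\|\om_e\|_{X}$ with $\|t\,\|\pa_x\om_L\|_{L^\infty}\|_{L^2(T_{j-1},T_j)}\lesssim 2^{-\frac{3j}{2}}\nu^{-\frac12}E_0$ (using \eqref{omL Linfty} and $T_j=2^j\nu^{-\frac13}$) gives, after summation in $j$, a contribution of order $\nu^{-\frac12}E_0\|\om_e\|_{X}\approx c\,\nu^{-\frac16}\|\om_e\|_{X}$, which is \emph{not} a small multiple of $\|\om_e\|_{X}$; no rearrangement of H\"older exponents repairs the missing $\nu^{-\frac16}$. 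This loss is exactly the echo-cascade obstruction the paper is built to overcome. The actual proof does not put the reaction term through the $L^1_tL^2$ Duhamel estimate at all: it solves the \emph{inviscid} transport equation \eqref{equ: f1} with this forcing, estimates $f_1$ by Lemma \ref{lemma: om decom} -- which, after conjugating by $e^{ilyt}$, measures the forcing in $L^2_t$ and gains a factor $|k-l|^{-\frac12}\lan k-l\ran^{-\frac12}$ from the frequency separation between $u_e^{(2)}$ (frequency $k-l$) and $\om_L$ (frequency $l$) -- and then reinstates viscosity through the auxiliary functions $f_2,f_3,f_4$ of \eqref{equ: f3}--\eqref{equ: f4}, with only the commutator terms handled by Proposition \ref{prop: fX estimate}. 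That construction, not "temporal localisation," is what produces the summable bound $\|\om_{2,j}\|_{X_{\theta,j}}\lesssim 2^{-\frac{j}{4}}\nu^{-\frac13}E_0\|\mathcal{M}_1\na\De^{-1}\om_e\|_{L^2(T_{j-1},T_j;L^2)}$ of Proposition \ref{prop: om2j}.

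Two further points. First, you misplace the origin of the restriction $\theta<\frac1{16}$: in the paper the dyadic gain only requires $\theta<\theta_1<\frac14$; the constraint $\theta<\frac1{16}$ arises when the data at $T_{j_0}=\nu^{-1-\epsilon}$ is transferred into the weighted bootstrap, where $\|(1+\nu^{\frac13}T_{j_0}\mathcal{M})^{\theta}\om_i(T_{j_0})\|_{L^2}\lesssim\nu^{-\frac{2\theta}{3}-\epsilon\theta}\cdot\nu^{\frac13-\epsilon}E_0$ must be $\lesssim\nu^{\frac{7}{24}}E_0$ (see \eqref{ome small}--\eqref{om2 small}). Second, and related, your short-time bound $\|\om_e\|_{X_\theta([0,T_0])}\lesssim E_0$ is too weak for exactly this transfer: the paper needs $\|\om_e(T_0)\|_{L^2}\lesssim\nu^{\frac13}\ln(1+\nu^{-1})E_0$ (Proposition \ref{small time}, obtained by Gronwall and the sharper bound $\|E_r\|_{L^1(0,T_0;L^2)}\lesssim\nu^{\frac13}\ln(1+\nu^{-1})E_0$ from \eqref{ErL2}), and without this extra factor $\nu^{\frac13}$ the weight $\nu^{-\frac{2\theta}{3}}$ accumulated by $T_{j_0}$ cannot be absorbed for any $\theta>0$.
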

We divide the proof into two parts by the time $[0,\nu^{-\frac16}]$ and $[\nu^{-\frac16},\infty)$ and establish the estimates in Proposition \ref{small time} and  Proposition \ref{ome estimate}, respectively.
\subsubsection{The estimate of $\om_e$ for $t\leq \nu^{-\frac16}$}\label{subsec: short time}
We shall use the energy method to derive the estimate $\|(1+ \nu^{\frac13}t\mathcal{M})^{\theta}\om_e(t)\|_{L^2}$ for $t\in [0,\nu^{-\frac16}]$.
\begin{proposition}\label{small time}
Let $\om_e$ be the solution of \eqref{omerr}. For $t\in [0,\nu^{-1/6}]$, there holds
\begin{align*}
\|(1+ \nu^{\frac13}t\mathcal{M})^{\theta}\om_e(t)\|_{L^2}\leq C\nu^{1/3}\ln (1+\nu^{-1})E_0.
\end{align*}
\end{proposition}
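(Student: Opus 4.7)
Since $\mathcal{M}(k)\leq 1$ for every $k$, on the interval $[0,\nu^{-1/6}]$ we have $\nu^{1/3}t\mathcal{M}(k)\leq \nu^{1/6}\ll 1$, so the Fourier multiplier $(1+\nu^{1/3}t\mathcal{M})^{\theta}$ has $L^2$ operator norm bounded by $(1+\nu^{1/6})^\theta\leq 2$. It therefore suffices to establish the unweighted bound $\|\om_e(t)\|_{L^2}\lesssim \nu^{1/3}\ln(1+\nu^{-1})E_0$. My plan is to perform a direct $L^2$-energy estimate on \eqref{omerr}. Writing $u=u_L+u_e$, both components are divergence-free with $u^{(2)}|_{y=\pm1}=0$ and $\om_e|_{y=\pm1}=0$, so the transport term $\langle u\cdot\na\om_e,\om_e\rangle$ and the shear drift $\langle y\pa_x\om_e,\om_e\rangle$ vanish, leaving
\[
\tfrac12\tfrac{d}{dt}\|\om_e\|_{L^2}^2+\nu\|\na\om_e\|_{L^2}^2\leq |\langle u_e\cdot\na\om_L,\om_e\rangle|+|\langle E_r,\om_e\rangle|.
\]
Integrating in $t$, using $\om_e(0)=0$, and pulling out $\|\om_e\|_{L^\infty L^2}$ reduces the task to bounding $\|E_r\|_{L^1 L^2}$ and $\|u_e\cdot\na\om_L\|_{L^1 L^2}$ on $[0,\nu^{-1/6}]$.

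For the linear error, I would take $s=0$ in \eqref{ErL2} and integrate in time. The dominant contribution is
\[
\int_0^{\nu^{-1/6}}(1+t)^{-1}(1+\nu t^3)^{-1/4}e^{-\nu t}\,dt\lesssim \ln(1+\nu^{-1}),
\]
while the two truly viscous pieces of $E_r$, carrying the prefactors $\nu$ and $\nu^{2/3}$, contribute respectively $O(\nu^{5/6})$ and $O(\nu^{1/2})$ after integration. Combined with $E_0\leq c\nu^{1/3}$ this yields $\|E_r\|_{L^1 L^2}\leq C\ln(1+\nu^{-1})E_0^2+O(\nu^{1/2})E_0$, and the first term already generates the claimed $\nu^{1/3}\ln(1+\nu^{-1})E_0$ bound via $E_0\leq c\nu^{1/3}$.

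For the reaction term I would use the algebraic identity
\[
u_e\cdot\na\om_L=(u_e^{(1)}-tu_e^{(2)})\pa_x\om_L+u_e^{(2)}(\pa_y+t\pa_x)\om_L,
\]
which trades the ``bad'' derivative $\pa_y\om_L$ for the two ``good'' derivatives $\pa_x\om_L$ and $(\pa_y+t\pa_x)\om_L$. These are controlled in $L^\infty$ by \eqref{omL Linfty} and \eqref{paxomL Linfty} and decay like $(1+\nu t^3)^{-1}$ and $(1+\nu t^3)^{-1/2}$ respectively. The velocity factors $u_e^{(1)}-tu_e^{(2)}=(\pa_y+t\pa_x)\De^{-1}\om_e$ and $u_e^{(2)}=-\pa_x\De^{-1}\om_e$ are absorbed into the $X$-norm of $\om_e$, which Proposition \ref{prop: fX estimate} bounds by $\|G\|_{L^1L^2}$ with $G=-u\cdot\na\om_e-u_e\cdot\na\om_L-E_r$. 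A bootstrap argument closes the estimate: the nonlinear transport $u\cdot\na\om_e$ generates a contribution of size $O(E_0)\|\om_e\|_X$ that is absorbed using $E_0\leq c\nu^{1/3}$, and the reaction contribution inherits the same time integrability as $E_r$ with a smaller prefactor. The main technical obstacle is controlling the growing weight $t$ in the $tu_e^{(2)}$ factor while retaining the logarithmic bound; this is precisely what the shifted derivative structure $(\pa_y+t\pa_x)$ and the inviscid-damping multiplier $\mathcal{M}_1$ built into the $X$-norm are designed to handle, and balancing this growth against the $L^\infty$ decay of $\om_L$ and the $L^1_t$ integration on $[0,\nu^{-1/6}]$ is the delicate point.
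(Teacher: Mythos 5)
Your reduction of the weighted norm to the plain $L^2$ norm and your treatment of $E_r$ coincide with the paper's proof: the paper likewise uses that $\nu^{1/3}t\mathcal{M}\leq \nu^{1/6}$ on this interval, performs the same $L^2$ energy identity (the transport and shear terms drop out exactly as you say, using $\om_e|_{t=0}=0$), and integrates \eqref{ErL2} with $s=0$ to get $\int_0^{\nu^{-1/6}}\|E_r\|_{L^2}\,dt\lesssim \nu^{1/3}\ln(1+\nu^{-1})E_0$. Where you diverge is the reaction term $u_e\cdot\na\om_L$, and there your proposal is both heavier than necessary and left unfinished. The paper does not route this term through the $X$-norm, Proposition \ref{prop: fX estimate}, or the multiplier $\mathcal{M}_1$ at all; it simply bounds
\begin{align*}
\|u_e\cdot\na\om_L\|_{L^2}\leq \|u_e\|_{L^2}\|\na\om_L\|_{L^\infty}\leq \|\om_e\|_{L^2}\|\na\om_L\|_{L^\infty}
\end{align*}
and closes by Gronwall, the decisive computation being
\begin{align*}
\int_0^{\nu^{-1/6}}\|\na\om_L\|_{L^\infty}\,dt\leq \int_0^{\nu^{-1/6}}\Big((1+t)\|\pa_x\om_L\|_{L^\infty}+\|(\pa_y+t\pa_x)\om_L\|_{L^\infty}\Big)dt\lesssim \nu^{-\frac13}E_0\leq Cc,
\end{align*}
so that the Gronwall factor $\exp\big(C\int_0^t\|\na\om_L\|_{L^\infty}ds\big)$ is $O(1)$. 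This is exactly the ``delicate point'' you flag but do not resolve, and on $[0,\nu^{-1/6}]$ it is in fact not delicate: the growing factor $t$ only costs $\int_0^{\nu^{-1/6}}(1+t)\,dt\approx \nu^{-1/3}$, which pairs with $E_0\leq c\nu^{1/3}$ to give an $O(c)$ constant --- this balance is precisely why the cutoff time $\nu^{-1/6}$ is chosen --- and since $\nu t^3\leq \nu^{1/2}$ here, the $L^\infty$ decay of $\om_L$ and the inviscid-damping structure ($\mathcal{M}_1$, the $X$-norm) play no role on this interval; they are needed only for $t\geq \nu^{-1/6}$. Your alternative bootstrap through Proposition \ref{prop: fX estimate} could likely be made to work, but as written the key absorption is only asserted, so you should either carry it out explicitly or replace your third paragraph with the short Cauchy--Schwarz/Gronwall argument above.
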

\begin{proof}
Noticing that for $t\in[0,\nu^{-\frac16}]$, we have
\begin{align*}
\|(1+ \nu^{\frac13}t\mathcal{M})^{\theta}\om_e(t)\|_{L^2_{x,y}}\leq 3\|(1+ \nu^{\frac13} t)^{\theta}\widehat{\om}_e(t)\|_{L^2_{k,y}}\leq 6\|\om_{e}\|_{L^2}.
\end{align*}
To conclude the proof, it remains to bound $\|\om_{e}\|_{L^2}$.
Thanks to $\om_e|_{y=\pm1}=0$, $\na \cdot u=0$, we take the inner product of \eqref{omerr} with $\om_e$ to obtain
\begin{align*}
\frac12\frac{d}{dt}\|\om_e\|^2_{L^2}+\nu\|\na \om_e\|^2_{L^2}=\lan -(u_e\cdot\na \om_{L})-E_r, \om_e \ran\leq (\|u_e\cdot\na \om_{L}\|_{L^2}+\|E_r\|_{L^2})\|\om_e\|_{L^2},
\end{align*}
which implies
\begin{align*}
\frac{d}{dt}\|\om_e\|_{L^2}\leq \|u_e\|_{L^2}\|\na \om_L\|_{L^\infty}+\|E_r\|_{L^2}.
\end{align*}
Thanks to $\|u_e\|_{L^2}\leq \|\om_e\|_{L^2}$ and Gronwall's inequality, we have
\begin{align}\label{omeL2 small}
\|\om_e\|_{L^2}\leq \exp\Big(C\int^t_0\|\na \om_{L}\|_{L^\infty}ds\Big)\int^t_0\| E_{r}(s)\|_{L^2}ds.
\end{align}
It follows from \eqref{paxpayomL L2} and \eqref{omL Linfty} that
\begin{equation}\label{naomL2}
\begin{aligned}
\int^{\nu^{-\frac16}}_0\|\na\om_{L}\|_{L^\infty}dt\leq \int^{\nu^{-\frac16}}_0(1+t)\|\pa_x \om_{L}\|_{L^\infty}+\|(\pa_y+t\pa_x)\om_{L}\|_{L^\infty}dt
\lesssim \nu^{-\frac13}E_0.
\end{aligned}
\end{equation}
Due to \eqref{ErL2}, we get
\begin{align}\label{Er small}
\int^{\nu^{-\frac16}}_0\|E_{r}\|_{L^2}
\lesssim  E_0\int^{\nu^{-\frac16}}_0(1+t)^{-1}E_0 +\nu+\nu^{\frac23}(1+\nu t^3)^{-\frac{1}{6}}dt \lesssim \nu^{\frac13}\ln (1+\nu^{-1}) E_0.
\end{align}
Inserting \eqref{naomL2} and \eqref{Er small} into \eqref{omeL2 small}, we finally obtain
\begin{align*}
\|\om_e(t)\|_{L^2}\leq C\exp(C\nu^{-\frac13}E_0)\nu^{\frac13}\ln (1+\nu^{-1})E_0\leq C\nu^{1/3}\ln (1+\nu^{-1}) E_0.
\end{align*}
Thus, we complete the proof of Proposition \ref{small time}.
\end{proof}

\subsubsection{The estimate of $\om_e$ for $t\geq \nu^{-\frac16}$}\label{subsec: long time}
In this subsection, we study the space-time estimate for $\om_e$ with $t\geq \nu^{-\frac16}$.
Recall
\begin{align*}
\|f\|_{X_{\theta}}=&\|(1+\nu^{\frac13}t\mathcal{M})^{\theta}f\|_{L^\infty L^2}+\nu^{\frac12}\|(1+\nu^{\frac13}t\mathcal{M})^{\theta}\na f\|_{L^2 L^2}\\
&+\|(1+\nu^{\frac13}t\mathcal{M})^{\theta}\mathcal{M}_1\na \De^{-1}f\|_{L^2 L^2}+\nu^{\frac12}\|(1+\nu^{\frac13}t\mathcal{M})^{\theta}\pa_x f\|_{L^1 L^2}.
\end{align*}
\begin{proposition}\label{ome estimate}
Let $\om_e$ be the solution of \eqref{omerr}. For $t\geq \nu^{-1/6}$, there holds
\begin{align*}
\|\om_e(t)\|_{X_{\theta}}\leq CE_0.
\end{align*}
\end{proposition}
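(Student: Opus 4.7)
The plan is to run a bootstrap argument on $\|\om_e\|_{X_\theta}$ starting from $T_0=\nu^{-1/6}$, using the decomposition $\om_e=\om_1+\om_2$ with $\om_2=\sum_{j\geq 1}\om_{2,j}$ described in the introduction. Assuming a priori that $\|\om_e\|_{X_\theta}\leq 2CE_0$ on some interval $[T_0,T]$, I would improve this to $\|\om_e\|_{X_\theta}\leq CE_0$ by estimating $\om_1$ and each $\om_{2,j}$ separately with the help of Proposition \ref{prop: fX estimate} (applied to the linear inhomogeneous equation each of them satisfies), together with the fine decay and damping bounds for $\om_L$, $u_L$ and $E_r$ collected in Proposition \ref{Pro: linear}.

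For $\om_1$ I would apply Proposition \ref{prop: fX estimate} with data $\om_1(T_0)=\om_e(T_0)$---controlled in $L^2$ by $C\nu^{1/3}\ln(1+\nu^{-1})E_0$ via Proposition \ref{small time}---and forcing $g_1=-u\cdot\na\om_e-u_e^{(1)}\pa_x\om_L-u_e^{(2)}(\pa_y+t\pa_x)\om_L-E_r$. The transport piece $u\cdot\na\om_e=(u_L+u_e)\cdot\na\om_e$ is absorbed by pairing the $L^\infty$ inviscid-damping bounds \eqref{u1LLinfty}--\eqref{u12L22} for $u_L$ (respectively the $\mathcal{M}_1\na\De^{-1}\om_e$ component of $\|\om_e\|_{X_\theta}$ for $u_e$) with the $\nu^{1/2}\|\na\om_e\|_{L^2L^2}$ component of the $X_\theta$ norm. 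The reaction-like terms $u_e^{(j)}\om_L$ are treated analogously, using the $L^\infty$ decay \eqref{omL Linfty}--\eqref{paxomL Linfty} for $\pa_x\om_L$ and $(\pa_y+t\pa_x)\om_L$. Finally $E_r$ is placed directly in $L^1_tL^2$ via \eqref{ErL2}, whose time integrability is ensured by the smallness $E_0\leq c\nu^{1/3}$.

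For each dyadic piece $\om_{2,j}$ I would again invoke Proposition \ref{prop: fX estimate}, starting from $t_0=T_{j-1}$ with zero initial data and forcing $tu_e^{(2)}\pa_x\om_L\chi_j$ supported on $(T_{j-1},T_j]$. On this active interval the factor $t$ is bounded by $T_j\sim 2^j\nu^{-1/3}$, while \eqref{omL Linfty} yields $\|(1+\nu^{1/3}t\mathcal{M})^{\theta}\pa_x\om_L\|_{L^\infty}\lesssim(1+\nu t^3)^{-1}e^{-\nu t}E_0\lesssim 2^{-3j}e^{-\nu T_{j-1}}E_0$ for $t\in(T_{j-1},T_j]$. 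Combining this with the inviscid-damping control of $u_e^{(2)}$ coming from $\|\om_e\|_{X_\theta}$ and integrating over an interval of length $\lesssim 2^j\nu^{-1/3}$ produces a geometric gain of order $2^{-cj}e^{-\nu T_{j-1}}E_0^2$ summable in $j$; for $t>T_j$ the homogeneous evolution of $\om_{2,j}$ continues to decay via \eqref{enhan3}--\eqref{enhan4}. The infinite superposition principle then yields $\|\om_2\|_{X_\theta}\leq\sum_{j\geq 1}\|\om_{2,j}\|_{X_\theta}\lesssim E_0$, and adding the $\om_1$ bound closes the bootstrap.

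The main obstacle is the $\om_2$ piece: the reaction term $tu_e^{(2)}\pa_x\om_L$ contains a linearly growing factor $t$ that encodes the echo cascade, and the $L^1_x$ data condition gives only $O(1)$ control on $\hat\om_L$ as $|k|\to 0$, so the $e^{-\nu k^2 t^3}$ enhanced-dissipation factor on $\om_L$ degenerates to $1$ precisely in the frequency regime where the $\pa_x$ gain is weakest. Localising $t$ around $T_j$ via the dyadic decomposition is what allows the polynomial decay $(1+\nu t^3)^{-1}$ of $\pa_x\om_L$ at $t\sim T_j$ to beat the factor $T_j\sim 2^j\nu^{-1/3}$ and produce the geometric gain summable in $j$; the frequency-adapted multipliers $\mathcal{M},\mathcal{M}_1$ together with the small weight exponent $\theta<1/16$ must be precisely tuned so that the growth of $(1+\nu^{1/3}t\mathcal{M})^{\theta}$ across $[T_{j-1},T_j]$ does not destroy this summability.
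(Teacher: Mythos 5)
There is a genuine gap in your treatment of $\om_{2}$. You propose to estimate each $\om_{2,j}$ by feeding the forcing $tu^{(2)}_e\pa_x\om_{L}\chi_j$ into Proposition \ref{prop: fX estimate} through the $L^1_tL^2$ norm, claiming that the decay $(1+\nu t^3)^{-1}\sim 2^{-3j}$ of $\pa_x\om_L$ beats the factor $t\sim 2^j\nu^{-1/3}$. But to close a bootstrap this term, which is \emph{linear} in $\om_e$, must come out with a coefficient that is $o(1)$, and the direct product estimate does not deliver this. Concretely, the $X_\theta$ norm only controls $\|\mathcal{M}_1\na\De^{-1}\om_e\|_{L^2_tL^2}$, and since $\mathcal{M}_1(k)=|k|^{1/2}+|k|$ degenerates as $k\to 0$, the quantity $\|u^{(2)}_e\|_{L^2_tL^2}$ is \emph{not} controlled by $\|\om_e\|_{X_\theta}$ at low frequencies; falling back on $\|u^{(2)}_e\|_{L^2_y}\leq\|\om_e\|_{L^2_y}$ and H\"older in time over an interval of length $\sim 2^j\nu^{-1/3}$ yields $\|tu^{(2)}_e\pa_x\om_L\chi_j\|_{L^1_tL^2}\lesssim 2^{-j}\nu^{-2/3}E_0\|\om_e\|_{X_\theta}$, which under $E_0\leq c\nu^{1/3}$ is only $c\,2^{-j}\nu^{-1/3}\|\om_e\|_{X_\theta}$ --- short by a full factor $\nu^{-1/3}$. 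This lost factor is precisely the echo-cascade growth you identify as the obstacle, and localizing $t$ dyadically does not by itself remove it.

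The paper's resolution, which your proposal does not contain, is Lemma \ref{lemma: om decom} together with the auxiliary decomposition around \eqref{equ: f1}--\eqref{equ: f4}: one first solves the \emph{inviscid} transport equation for $f_1$ with the same forcing, written in Fourier as a convolution of $v^{(2)}_{e,k-l}$ (frequency $k-l$) against $w^L_l$ (frequency $l$, oscillating like $e^{-ilyt}$), and exploits the non-resonance $k\neq l$ to gain $|k-l|^{-1/2}\lan k-l\ran^{-1/2}$ while measuring the forcing in $L^2_t$ rather than $L^1_t$. The factor $|k-l|^{-1/2}$ exactly matches the low-frequency degeneracy of $\mathcal{M}_1(k-l)$, converting the needed control of $u^{(2)}_e$ into the quantity $\|\mathcal{M}_1\na\De^{-1}\om_e\|_{L^2_tL^2}$ that the $X_\theta$ norm does provide, and the $L^2_t$ measurement avoids the extra $(2^j\nu^{-1/3})^{1/2}$; one then passes back to the viscous problem via $f_2$, $f_3=h_{\nu,t}^{-1}f_2+h_{\nu,t}^{-2}f_1$ and $f_4=\om_{2,j}-f_3$. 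Without this step the argument does not close at the threshold $\nu^{1/3}$. A secondary point: the weighted estimates \eqref{fX large time}--\eqref{esti: nu1/3M f} are only established for $t_0\geq\nu^{-1-\epsilon}$, so your single bootstrap in the $X_\theta$ norm starting at $T_0=\nu^{-1/6}$ is not licensed; the paper runs a two-stage argument, first in the unweighted $X$ norm on $(T_0,T_{j_0})$ with $T_{j_0}=\nu^{-1-\epsilon}$ and only then in $X_\theta$ on $(T_{j_0},T)$.
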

As introduced in Section \ref{sec:Introduction}, we decompose $\om_e$ into $\om_{1}+\om_{2}$ and establish the estimates for $\om_1$ and $\om_2$ in Proposition \ref{lemma: g} and Proposition \ref{pro: wre}, respectively.

\vspace{0.2cm}

\begin{proposition}\label{lemma: g}
Let $\om_{1}$ be the solution of
\begin{equation}
\left\{
\begin{aligned}\label{equ: om*1}
&\pa_t\om_{1}-\nu\De\om_{1}+y\pa_x\om_{1}+u\cdot\na\om_e+u^{(1)}_e\pa_x\om_{L}+u^{(2)}_e(\pa_y+t\pa_x)\om_{L}+E_r=0, \\
&\om_{1}(t,x,\pm1)=0, \quad \om_{1}|_{t=T_{0}}=\om_{e}(T_{0}).
\end{aligned}
\right.
\end{equation}
There exists a small constant $\epsilon$ such that for $j_0=\log_2 \nu^{-\frac23-\epsilon}$, we have
\begin{align}
&\|\om_{1}\|_{X(T_0,T_{j_0})}\leq C \nu^{-\frac13}E_0\|\om_e\|_{X(T_0,T_{j_0})}+C\nu^{-\frac58} \|\om_e\|^2_{X(T_0,T_{j_0})} +C\nu^{\frac13-\epsilon}E_0, \label{esti om1 tildeX}\\
&\|\om_{1}\|_{X_{\theta}(T_{j_0},T)}\leq C\nu^{-\frac58}\|\om_e\|^2_{X_{\theta}(T_{j_0},T)}+C\nu^{\frac13}E_0+C\|(1+ \nu^{\frac13}T_{j_0} \mathcal{M})^{\theta}\om_{1}(T_{j_0})\|_{L^2}.\label{esti om1 X large}
\end{align}
\end{proposition}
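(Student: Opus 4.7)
The plan is to view \eqref{equ: om*1} as an instance of the linear equation \eqref{equ: fg} with source
$$g_1 := -u\cdot\na\om_e - u^{(1)}_e\pa_x\om_L - u^{(2)}_e(\pa_y+t\pa_x)\om_L - E_r,$$
and then apply Proposition \ref{prop: fX estimate} on each of the two time intervals. On $[T_0, T_{j_0}]$ I would use \eqref{esti: ftildeX} and \eqref{f enhance}, yielding
$$\|\om_1\|_{X(T_0,T_{j_0})} \leq C\|\om_e(T_0)\|_{L^2} + C\|g_1\|_{L^1(T_0,T_{j_0};L^2)}.$$
By the choice $T_{j_0} = 2^{j_0}\nu^{-\frac13} = \nu^{-1-\epsilon}$, the second interval $[T_{j_0},T]$ lies in the regime where \eqref{fX large time} is available, giving
$$\|\om_1\|_{X_\theta(T_{j_0},T)} \leq C\|(1+\nu^{\frac13}T_{j_0}\mathcal{M})^\theta \om_1(T_{j_0})\|_{L^2} + C\|(1+\nu^{\frac13}t\mathcal{M})^\theta g_1\|_{L^1 L^2}.$$
The initial data contribution $\|\om_e(T_0)\|_{L^2}$ in the first estimate is already bounded by $C\nu^{\frac13}\ln(1+\nu^{-1})E_0$ via Proposition \ref{small time}, which contributes to the $C\nu^{\frac13-\epsilon}E_0$ term. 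Everything else reduces to bounding the four pieces of $g_1$ (weighted, for the second estimate) in $L^1_t L^2_{x,y}$.

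For the error term $E_r$, I would integrate \eqref{ErL2} directly: the three pieces $(1+t)^{-1}(1+\nu t^3)^{-1/4}E_0$, $\nu(1+\nu t^3)^{-1/4}$ and $\nu^{2/3}(1+\nu t^3)^{-5/12}$ against $e^{-\nu t}$ produce a time integral bounded by $C\nu^{\frac13-\epsilon}E_0$ (absorbing the quadratic $E_0^2$ contribution using the assumption $E_0 \leq c\nu^{\frac13}$). For the two \emph{linear interaction} terms, I plan to apply Hölder in time: Proposition \ref{Pro: linear} yields $\|\pa_x\om_L\|_{L^1_t L^\infty_{x,y}} \leq C\nu^{-\frac13}E_0$ from \eqref{omL Linfty} (change of variables $\nu^{\frac13}t$ in $(1+\nu t^3)^{-1}$), and similarly $\|(\pa_y+t\pa_x)\om_L\|_{L^1_t L^\infty_{x,y}}$ by combining \eqref{paxomL Linfty} with the observation that on $[T_0,T_{j_0}]$ the factor $t$ is absorbed into the decay. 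Pairing with $\|u^{(i)}_e\|_{L^\infty L^2}\leq \|\om_e\|_{L^\infty L^2}\leq \|\om_e\|_X$ produces the $C\nu^{-\frac13}E_0\|\om_e\|_X$ contribution.

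The main obstacle is the nonlinear term $u\cdot\na\om_e$, for which I split using incompressibility
$$u\cdot\na\om_e = (u^{(1)}-tu^{(2)})\pa_x\om_e + u^{(2)}(\pa_y+t\pa_x)\om_e,$$
and then estimate each factor by Hölder in space and time. The factor $\|\om_e\|$-derivative pieces are controlled by $\nu^{\frac12}\|\na\om_e\|_{L^2 L^2}\leq \|\om_e\|_X$, costing one power of $\nu^{-\frac12}$; the velocity $\|u\|_{L^2 L^\infty_{x,y}}$ must be interpolated between the $X$-norm pieces $\|\mathcal{M}_1\na\De^{-1}\om_e\|_{L^2L^2}$ (which gives $H^{1/2}$ in $x$ at low frequency and $H^1$ at high) and $\nu^{\frac12}\|\na\om_e\|_{L^2L^2}$ (for the extra $y$-regularity needed for Sobolev embedding into $L^\infty_{xy}$). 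A careful Gagliardo--Nirenberg bookkeeping of these $\nu$-powers produces the stated $C\nu^{-\frac58}\|\om_e\|_X^2$. This combinatorics of exponents, together with distinguishing low and high $x$-frequencies where $\mathcal{M}_1$ behaves differently, is the main technical hurdle.

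For the weighted estimate \eqref{esti om1 X large}, the strategy is unchanged but each bound on $g_1$ must be upgraded with $(1+\nu^{\frac13}t\mathcal{M})^\theta$. Since $\mathcal{M}=\mathcal{M}(D_x)$ commutes with all $y$-operations, the $\om_L$-factor bounds in Proposition \ref{Pro: linear} already carry this weight for any $s\geq 0$; for the product with the nonlinear piece I would use a paraproduct-type splitting
$$(1+\nu^{\frac13}t\mathcal{M})^\theta(uv) \lesssim \big((1+\nu^{\frac13}t\mathcal{M})^\theta u\big)\,v + u\,\big((1+\nu^{\frac13}t\mathcal{M})^\theta v\big),$$
valid since $\theta < \tfrac{1}{16}$ is small, so the weight distributes to whichever factor is being estimated in $L^2$. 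The initial term $\|(1+\nu^{\frac13}T_{j_0}\mathcal{M})^\theta \om_1(T_{j_0})\|_{L^2}$ appears as stated, and combining with the same four-piece analysis (now in weighted norms) yields \eqref{esti om1 X large}.
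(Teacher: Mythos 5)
Your overall architecture is the paper's: Duhamel via Proposition \ref{prop: fX estimate} (\eqref{esti: ftildeX} on $[T_0,T_{j_0}]$, \eqref{fX large time} on $[T_{j_0},T]$ with $T_{j_0}=\nu^{-1-\epsilon}$), $\|\om_e(T_0)\|_{L^2}$ from Proposition \ref{small time}, $E_r$ by integrating \eqref{ErL2}, and the reaction terms $u^{(1)}_e\pa_x\om_L+u^{(2)}_e(\pa_y+t\pa_x)\om_L$ by H\"older pairing $\|u_e\|_{L^\infty_tL^2}$ against the $L^1_t$ decay of Proposition \ref{Pro: linear}. All of that is sound and matches the paper.

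The gap is in your treatment of the transport term $u\cdot\na\om_e$. The splitting $u\cdot\na\om_e=(u^{(1)}-tu^{(2)})\pa_x\om_e+u^{(2)}(\pa_y+t\pa_x)\om_e$ is only useful when the two structural facts behind it are available: inviscid damping for the velocity (so $u^{(1)}-tu^{(2)}$ stays bounded) and the shear structure of the differentiated function (so $(\pa_y+t\pa_x)$ of it stays bounded). Both hold for $u_L$ and $\om_L$ — this is exactly how the paper estimates $u_L\cdot\na\om_L$ inside $E_r$ — but neither holds for $u_e$ and $\om_e$: the $X$-norm controls $\nu^{1/2}\|\na\om_e\|_{L^2L^2}$, not $(\pa_y+t\pa_x)\om_e$, and $u_e$ has no pointwise-in-time decay, so $tu_e^{(2)}$ and $t\pa_x\om_e$ carry an uncompensated factor $t$, which is as large as $\nu^{-1-\epsilon}$ on the first interval and unbounded on the second. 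Your claim that ``the derivative pieces are controlled by $\nu^{1/2}\|\na\om_e\|_{L^2L^2}$, costing one power of $\nu^{-1/2}$'' therefore fails for this splitting. Relatedly, interpolating $\|u\|_{L^2_tL^\infty}$ ``between the $X$-norm pieces'' of $\om_e$ cannot cover the $u_L$ part of $u$, which is not controlled by $\|\om_e\|_X$ at all. The fix (the paper's route) is to split $u=u_L+u_e$ \emph{first} and keep the plain $u^{(1)}\pa_x+u^{(2)}\pa_y$ decomposition: for $u_L\cdot\na\om_e$, use the pointwise decay $\|u_L^{(1)}\|_{L^\infty}\lesssim(1+t)^{-1}E_0$ and $\|u_L^{(2)}\|_{L^\infty}\lesssim(1+t)^{-2}\ln(1+t)E_0$ paired respectively against the $L^1_t$ component $\nu^{1/2}\|\pa_x\om_e\|_{L^1L^2}$ and the $L^2_t$ component $\nu^{1/2}\|\na\om_e\|_{L^2L^2}$ of the $X$-norm (this is precisely why the $X$-norm carries the $L^1_t$ piece); for $u_e\cdot\na\om_e$, bound $\|v_{e,k}\|_{L^\infty_y}$ on the Fourier side by $\|v_{e,k}\|_{L^2_y}^{3/4}\|\pa_y^2v_{e,k}\|_{L^2_y}^{1/4}$, distinguishing $|k|\le1$ and $|k|\ge1$ so the first factor is read off from $\|\mathcal{M}_1\na\De^{-1}\om_e\|_{L^2L^2}$, which yields $\|\mathcal{M}_1\na\De^{-1}\om_e\|_{L^2L^2}^{3/4}\,\|\na\om_e\|_{L^2L^2}^{5/4}\le\nu^{-5/8}\|\om_e\|_X^2$ — the $5/4$ power on the $\nu^{1/2}$-weighted gradient is where the exponent $-\tfrac58$ actually comes from, not from a generic Gagliardo--Nirenberg bookkeeping of $\|u\|_{L^2L^\infty}$.
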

\begin{proof}
Taking
$$g\triangleq u\cdot\na\om_e+u^{(1)}_e\pa_x\om_{L}+u^{(2)}_e(\pa_y+t\pa_x)\om_{L}+E_r,$$
we use \eqref{esti: ftildeX} with $t_0=T_0$ and Proposition \ref{small time} to obtain that
\begin{align*}
\|\om_{1}\|_{X(T_0,T_{j_0})}\lesssim \|\om_{1}(T_0)\|_{L^2}+\|g\|_{L^1(T_0,T_{j_0};L^2)}\lesssim  \nu^{\frac13-\epsilon}E_0+\|g\|_{L^1(T_0,T_{j_0};L^2)},
\end{align*}
where we use the fact that $\om_{1}(T_0)=\om_{e}(T_0)$.

By \eqref{fX large time} with $t_0=T_{j_0}=2^{j_0}\nu^{-\frac13}$, we have
\begin{align*}
\|\om_{1}\|_{X_{\theta}(T_{j_0},T)}\lesssim \|(1+ \nu^{\frac13}T_{j_0} \mathcal{M})^{\theta}\om_{1}(T_{j_0})\|_{L^2}+ \|(1+\nu^{\frac13}t\mathcal{M})^{\theta}g\|_{L^1(T_{j_0},T; L^2)}.
\end{align*}
It suffices to prove
\begin{align}
\|g\|_{L^1(T_0,T_{j_0};L^2)}\lesssim & \nu^{-\frac13}E_0\|\om_e\|_{X(T_0,T_{j_0})}+\nu^{-\frac58} \|\om_e\|^2_{X(T_0,T_{j_0})} +\nu^{\frac13-\epsilon}E_0,\label{esti: g small}\\
\|(1+\nu^{\frac13}t\mathcal{M})^{\theta}g\|_{L^1 (T_{j_0},T; L^2) }\lesssim & \nu^{-\frac58} \|\om_e\|^2_{X_{\theta}(T_0,T_{j_0})}+\nu^{\frac13}E_0.\label{esti: g}
\end{align}
The proof of \eqref{esti: g} appears to be more complex than that of \eqref{esti: g small}. Here, we primarily present the detailed argument of \eqref{esti: g}, which is divided into three steps. While we highlight the key differences between the proofs of \eqref{esti: g} and \eqref{esti: g small}.

\vspace{0.2cm}

\textbf{Step 1.} The estimate of $u^{(1)}_e\pa_x\om_{L}+u^{(2)}_e(\pa_y+t\pa_x)\om_{L}$.
Thanks to Proposition \ref{Pro: linear}, Lemma \ref{lemma: fg esti} and the fact that $\|u_e\|_{L^2}\leq \|\om_e\|_{L^2}$, we obtain
\begin{equation*}
\begin{aligned}
&\|(1+\nu^{\frac13}t\mathcal{M})^{\theta}(u^{(1)}_e\pa_x\om_{L}+u^{(2)}_e(\pa_y+t\pa_x)\om_{L})\|_{ L^2 } \\
\leq&\|(1+\nu^{\frac13}t\mathcal{M})^{\theta}u_e\|_{L^2}(\|(1+\nu^{\frac13}t\mathcal{M})^{\theta}\pa_x\om_{L}\|_{L^\infty} +\|(1+\nu^{\frac13}t\mathcal{M})^{\theta}(\pa_y+t\pa_x)\om_{L}\|_{L^\infty}).
\end{aligned}
\end{equation*}
Integrating in time, we deduce
\begin{equation}\label{weight-u1paxomL}
\begin{aligned}
&\|(1+\nu^{\frac13}t\mathcal{M})^{\theta}(u^{(1)}_e\pa_x\om_{L}+u^{(2)}_e(\pa_y+t\pa_x)\om_{L})\|_{L^1(T_{j_0},T; L^2) } \\
\leq& \|(1+\nu^{\frac13}t\mathcal{M})^{\theta}\om_{e}\|_{L^\infty(T_{j_0},T; L^2)}\|(1+\nu t^3)^{-\frac12}\|_{L^1(T_{j_0},T)}E_0\\
\leq& CE_0\|\om_{e}\|_{X_{\theta}(T_{j_0},T)}.
\end{aligned}
\end{equation}

For $t\in (T_0, T_{j_0})$, there holds
\begin{align*}
\|(1+\nu t^3)^{-\frac12}\|_{L^1(T_0,T_{j_0})}\leq C\nu^{-\frac13}.
\end{align*}
Then, a similar calculation as \eqref{weight-u1paxomL} shows that
\begin{align}\label{u1epaxom+u2payom}
\|u^{(1)}_e\pa_x\om_{L}+u^{(2)}_e(\pa_y+t\pa_x)\om_{L}\|_{L^1(T_{0},T_{j_0}; L^2)}\leq C\nu^{-\frac13}E_0\|\om_e\|_{X(T_{0},T_{j_0})}.
\end{align}

\textbf{Step 2.} The estimate of $u\cdot\na \om_e$.
We first decompose it into
$$u\cdot\na \om_e=u_e\cdot\na \om_e+u_L\cdot\na \om_e.$$

For $u_L\cdot\na \om_e$, we use Lemma \ref{lemma: fg esti} to obtain
\begin{equation}
\begin{aligned}\label{uLomeL1L2}
&\|(1+\nu^{\frac13}t\mathcal{M})^{\theta}(u_L\cdot\na \om_e)\|_{ L^1(T_{j_0},T; L^2)}\\
\leq &\|(1+\nu^{\frac13}t\mathcal{M})^{\theta}(u_{L}^{(1)}\pa_x \om_e)\|_{ L^1(T_{j_0},T; L^2)}+\|(1+\nu^{\frac13}t\mathcal{M})^{\theta}(u_{L}^{(2)}\pa_y \om_e)\|_{L^1(T_{j_0},T; L^2)}\\
\leq &\|(1+\nu^{\frac13}t\mathcal{M})^{\theta}u^{(1)}_L\|_{L^\infty(T_{j_0},T; L^\infty)}\|(1+\nu^{\frac13}t\mathcal{M})^{\theta}\pa_x \om_e\|_{L^1(T_{j_0},T; L^2)}\\
&+\|(1+\nu^{\frac13}t\mathcal{M})^{\theta}u^{(2)}_L\|_{L^2(T_{j_0},T;L^\infty)}\|(1+\nu^{\frac13}t\mathcal{M})^{\theta}\pa_y \om_e\|_{L^2(T_{j_0},T; L^2)}.
\end{aligned}
\end{equation}
By \eqref{u1LLinfty} and \eqref{u12L22}, we deduce
\begin{equation*}
\begin{aligned}
&\|(1+\nu^{\frac13}t\mathcal{M})^{\theta}u^{(1)}_L\|_{L^\infty(T_{j_0},T; L^\infty)}\leq T^{-1}_{j_0}E_0\leq \nu E_0,\\
&\|(1+\nu^{\frac13}t\mathcal{M})^{\theta}u^{(2)}_L\|_{L^2(T_{j_0},T;L^\infty)}\leq \|(1+t)^{-2}\ln(1+t)\|_{L^2(T_{j_0}, T)} E_0\leq \nu^{\frac32}E_0.
\end{aligned}
\end{equation*}
Inserting above two estimates into \eqref{uLomeL1L2} and using the definition of $X$-norm \eqref{fXk}, we arrive at
\begin{align}\label{weight-uLnaome}
\|(1+\nu^{\frac13}t\mathcal{M})^{\theta}(u_L\cdot\na \om_e)\|_{ L^1(T_{j_0},T; L^2)}
\lesssim \nu^{\frac12}\|\om_{e}\|_{X_{\theta}(T_{j_0},T)}E_0.
\end{align}

For $(T_0,T_{j_0})$, there holds
\begin{equation*}
\begin{aligned}
&\|u^{(1)}_L\|_{L^\infty(T_{j_0},T; L^\infty)}\leq T^{-1}_{0}E_0\leq \nu^{\frac16} E_0,\\
&\|u^{(2)}_L\|_{L^2(T_{j_0},T;L^\infty)}\leq \|(1+t)^{-2}\ln(1+t)\|_{L^2(T_{0}, T_{j_0})} E_0\leq \nu^{\frac16}E_0,
\end{aligned}
\end{equation*}
which implies
\begin{align}\label{uLnaome}
\|u_L\cdot\na \om_e\|_{ L^1(T_{0},T_{j_0}; L^2)}
\lesssim \nu^{-\frac13}\|\om_{e}\|_{X(T_{0},T_{j_0})}E_0.
\end{align}

Next, we get into the estimate of  $u_e\cdot\na \om_e$. By Lemma \ref{lemma: fg esti}, we have
\begin{align*}
&\|(1+\nu^{\frac13}t\mathcal{M})^{\theta}(u_e\cdot\na \om_e)\|_{L^1(T_{j_0},T; L^2)}\\
\leq &\|(1+\nu^{\frac13}t\mathcal{M})^{\theta}(u^{(1)}_e \pa_x\om_e)\|_{L^1(T_{j_0},T; L^2)}+\|(1+\nu^{\frac13}t\mathcal{M})^{\theta}(u^{(2)}_e\pa_y \om_e)\|_{L^1(T_{j_0},T; L^2)}\\
\leq &\big\|\|(1+\nu^{\frac13}t\mathcal{M})^{\theta}u^{(1)}_e\|_{L^\infty} \|(1+\nu^{\frac13}t\mathcal{M})^{\theta}\pa_x\om_e\|_{L^2}\big\|_{L^1(T_{j_0},T)}\\
&+\|(1+\nu^{\frac13}t\mathcal{M})^{\theta}u^{(2)}_e\|_{L^2(T_{j_0},T; L^\infty)}\|(1+\nu^{\frac13}t\mathcal{M})^{\theta}\pa_y \om_e\|_{L^2(T_{j_0},T;L^2)}
:=\textrm{I}+\textrm{II}.
\end{align*}
Define $v_{e,k}=\int_{\R} e^{-ixk}u_edx$ and $w_{e,k}=\int_{\R} e^{-ixk}\om_edx$. There holds
\begin{align*}
\|(1+\nu^{\frac13}t\mathcal{M})^{\theta}u^{(1)}_e\|_{L^\infty}\leq \int_{\R}(1+\nu^{\frac13}t\mathcal{M}(k) )^{\theta}\|v^{(1)}_{e,k}\|_{L^\infty_y} dk.
\end{align*}
For $|k|\leq 1$, using the fact $\|v^{(1)}_{e,k}\|_{L^\infty_y}\leq \|w_{e,k}\|_{L^2_y}$ and H\"older's inequality, we obtain
\begin{equation}\label{weL2l<1}
\begin{aligned}
\int_{|k|\leq 1}(1+\nu^{\frac13} t\mathcal{M}(k))^{\theta}\|v^{(1)}_{e,k}\|_{L^\infty_y} dk\leq \|(1+\nu^{\frac13}t\mathcal{M})^{\theta}\om_e\|_{L^2_x L^2_y}.
\end{aligned}
\end{equation}
For $|k|\geq 1$, by means of the interpolation
\begin{align}\label{interpo vLinfty}
\|v^{(1)}_{e,k}\|_{L^\infty_y}\leq \|v^{(1)}_{e,k}\|^{\frac34}_{L^2_y}\|\pa_y^2v^{(1)}_{e,k}\|^{\frac14}_{L^2_y}= |k|^{-\frac34}\|k v^{(1)}_{e,k}\|^{\frac34}_{L^2_y} \|\pa_y^2v^{(1)}_{e,k}\|^{\frac14}_{L^2_y},
\end{align}
 we derive
\begin{equation}\label{v1eweL2L1L2}
\begin{aligned}
&\int_{|k|\geq1}(1+\nu^{\frac13}t\mathcal{M}(k))^{\theta}\|v^{(1)}_{e,k}\|_{L^\infty_y}dk\\
\leq& \int_{|k|\geq 1}|k|^{-\frac34}(1+\nu^{\frac13}t\mathcal{M}(k))^{\theta}\|kv^{(1)}_{e,k}\|^{\frac34}_{L^2_y}\|\pa_y^2v^{(1)}_{e,k}\|^{\frac14}_{L^2_y}dk\\
\leq& \||k|^{-\frac34}\|_{L^2(|k|\geq 1)}\|(1+\nu^{\frac13}t\mathcal{M}(k))^{\theta}k v^{(1)}_{e,k}\|_{L^2_kL^2_y}^{\frac34}\|(1+\nu^{\frac13}t\mathcal{M}(k))^{\theta}\pa^2_y v^{(1)}_{e,k}\|_{L^2_kL^2_y}^{\frac14}\\
\leq & \|(1+\nu^{\frac13}t\mathcal{M}(k))^{\theta}k v^{(1)}_{e,k}\|^{\frac34}_{L^2_kL^2_y}\|(1+\nu^{\frac13}t\mathcal{M}(k))^{\theta}\pa_y w_{e,k}\|^{\frac14}_{L^2_kL^2_y}.
\end{aligned}
\end{equation}
Appealing to \eqref{weL2l<1} and \eqref{v1eweL2L1L2}, we deduce
\begin{equation}\label{esti I}
\begin{aligned}
\textrm{I}\leq& \|(1+\nu^{\frac13}t\mathcal{M})^{\theta}\om_e\|_{L^\infty(T_{j_0},T;L^2)}\|(1+\nu^{\frac13}t\mathcal{M})^{\theta}\pa_x\om_e\|_{L^1(T_{j_0},T;L^2)}\\
&+\|(1+\nu^{\frac13}t\mathcal{M})^{\theta}\mathcal{M}_1\na \De^{-1}\om_e\|^{\frac34}_{L^2(T_{j_0},T;L^2)}\|(1+\nu^{\frac13}t\mathcal{M})^{\theta}\na  \om_e\|^{\frac54}_{L^2(T_{j_0},T;L^2)}\\
\leq &C\nu^{-\frac58}\|\om_e\|^2_{X_{\theta}(T_{j_0},T)}.
\end{aligned}
\end{equation}

For $\textrm{II}$, it remains to analyze the contribution of the factor $\|(1+\nu^{\frac13}t\mathcal{M})^{\theta}u^{(2)}_e\|_{L^\infty}$. There holds
\begin{align*}
\|(1+\nu^{\frac13}t\mathcal{M})^{\theta}u^{(2)}_e\|_{L^\infty}\leq \int_{\R}(1+\nu^{\frac13}t\mathcal{M}(k))^{\theta}\|v^{(2)}_{e,k}\|_{L^\infty_y} dk.
\end{align*}
Let $\psi_e=(\pa^2_y-k^2)^{-1}w_e$. For $|k|\leq 1$, the interpolation inequality gives
\begin{align}\label{esti: ve2}
\|v^{(2)}_{e,k}\|_{L^\infty_y}=\|k\psi_e(k)\|_{L^\infty_y}\leq |k|\|\psi_e\|^{\frac34}_{L^2_y}\|\pa_y^2\psi_e\|^{\frac14}_{L^2_y}.
\end{align}
Due to $\psi_e(\pm1)=\pa^2_y\psi_e(\pm1)=0$, we obtain
\begin{align*}
&\|\psi_e\|_{L^2_y}\leq \|\pa_y\psi_e\|_{L^2_y}=\|v^{(1)}_{e,k}\|_{L^2_y},\\
&\|\pa^2_y\psi_e\|_{L^2_y}\leq \|\pa_y^3\psi_e\|_{L^2_y}=\|\pa^2_yv^{(1)}_{e,k}\|_{L^2_y}.
\end{align*}
Inserting above two estimates into \eqref{esti: ve2}, we deduce
\begin{align*}
\|v^{(2)}_{e,k}\|_{L^\infty_y}\leq |k|\|v^{(1)}_{e,k}\|^{\frac34}_{L^2_y}\|\pa_y^2v^{(1)}_{e,k}\|^{\frac14}_{L^2_y}.
\end{align*}
Thus, we have
\begin{equation}\label{ue2l<1}
\begin{aligned}
&\int_{|k|\leq1}(1+\nu^{\frac13}t\mathcal{M}(k))^{\theta}\|v^{(2)}_{e,k}\|_{L^\infty_y}dk\\
\leq &\int_{|k|\leq 1} |k|(1+\nu^{\frac13}t\mathcal{M}(k))^{\theta}\|v_{e,k}\|^{\frac34}_{L^2_y}\|\pa_y^2v_{e,k}\|^{\frac14}_{L^2_y} dk\\
\leq &\|(1+\nu^{\frac13}t\mathcal{M}(k))^{\theta} |k|^{\frac12} v_{e,k}\|^{\frac34}_{L^2_kL^2_y}\|(1+\nu^{\frac13}t\mathcal{M}(k))^{\theta}\pa_y w_{e,k}\|^{\frac14}_{L^2_kL^2_y}.
\end{aligned}
\end{equation}
For $|k|\geq 1$, the same calculations as \eqref{interpo vLinfty} and \eqref{v1eweL2L1L2} give that
\begin{equation}
\begin{aligned}\label{ue2l>1}
&\int_{|k|\geq1}(1+\nu^{\frac13}t\mathcal{M}(k))^{\theta}\|v^{(2)}_{e,k}\|_{L^\infty_y}dk\\
\leq &C\|(1+\nu^{\frac13}t\mathcal{M}(k))^{\theta}k v^{(2)}_{e,k}\|^{\frac34}_{L^2_kL^2_y}\|(1+\nu^{\frac13}t\mathcal{M}(k))^{\theta}\pa_y w_{e,k}\|^{\frac14}_{L^2_kL^2_y}.
\end{aligned}
\end{equation}
Appealing to \eqref{ue2l<1} and \eqref{ue2l>1}, we derive
\begin{align}\label{esti II}
\textrm{II}\leq C\nu^{-\frac58}\|\om_e\|^2_{X_{\theta}(T_{j_0},T)}.
\end{align}
Combining  \eqref{esti I} and \eqref{esti II}, we obtain
\begin{align}\label{weight-uenaome}
\|(1+\nu^{\frac13}t\mathcal{M})^{\theta}(u_e\cdot\na \om_e)\|_{L^1(T_{j_0},T;L^2)}\leq C\nu^{-\frac58}\|\om_e\|^2_{X_{\theta}(T_{j_0},T)}.
\end{align}

A similar calculation to $\textrm{I}$ and $\textrm{II}$ implies
\begin{align}\label{uenaome}
\|u_e\cdot\na \om_e\|_{L^1(T_0,T_{j_0};L^2)}\leq C\nu^{-\frac58}\|\om_e\|^2_{X(T_0,T_{j_0})}.
\end{align}
\textbf{Step 3.} The estimate of $E_r$.
Due to $(1+\nu t^3)^{-\frac14}\leq (\nu^{\frac13}(1+t))^{-\epsilon}$ and $(\nu(1+t))^{\frac13}e^{-\nu t}\leq C$, we have
\begin{align*}
&\|e^{-\nu t}(1+t)^{-1}(1+\nu t^3)^{-\frac14}E_0\|_{L^1(T_{j_0},T)}+\|\nu e^{-\nu t}(1+\nu t^3)^{-\frac14}\|_{L^1(T_{j_0},T)}\\
\leq  & \|\nu^{\frac13}(1+t)^{-1}(\nu^{\frac13}(1+t))^{-\epsilon}+\nu^{\frac23}(1+\nu t^3)^{-\frac14}(1+t)^{-\frac13}\|_{L^1(T_{j_0},T)}
\leq \nu^{\frac13},
\end{align*}
and
\begin{align*}
\|e^{-\nu t}(1+t)^{-1}(1+\nu t^3)^{-\frac14}E_0\|_{L^1(T_0, T_{j_0})}+\|\nu e^{-\nu t}(1+\nu t^3)^{-\frac14}\|_{L^1(T_0, T_{j_0})}\leq \nu^{\frac13-\epsilon}.
\end{align*}
Moreover,
\begin{align*}
&\int_{T_{0}}^{T} \nu^{\frac23} e^{-\nu t}(1+\nu t^3)^{-\frac{5}{12}}dt\leq \nu^{\frac13}.
\end{align*}
Then, we use \eqref{ErL2} to deduce
\begin{align}
&\|(1+\nu^{\frac13}t\mathcal{M})^{\theta}E_r\|_{ L^1(T_{j_0},T;L^2)}\leq C\nu^{\frac13}E_0,\label{weight-Er}\\
&\|E_r\|_{ L^1(T_0,T_{j_0};L^2)}\leq C\nu^{\frac13-\epsilon}E_0.\label{estim Er}
\end{align}

Summing up \eqref{weight-u1paxomL}, \eqref{weight-uLnaome}, \eqref{weight-uenaome} and \eqref{weight-Er}, we finally derive
\begin{align*}
\|(1+\nu^{\frac13}t\mathcal{M})^{\theta}g\|_{L^1(T_{j_0},T; L^2)}\leq &C(E_0\|\om_{e}\|_{X_{\theta}(T_{j_0},T)}+\nu^{-\frac58} \|\om_e\|^2_{X_{\theta}(T_0,T_{j_0})}+\nu^{\frac13}E_0)\\
\leq& C(\nu^{-\frac58} \|\om_e\|^2_{X_{\theta}(T_0,T_{j_0})}+\nu^{\frac58}E^2_0+\nu^{\frac13}E_0)\\
\leq &C(\nu^{-\frac58} \|\om_e\|^2_{X_{\theta}(T_0,T_{j_0})}+\nu^{\frac13}E_0).
\end{align*}
Combining \eqref{u1epaxom+u2payom}, \eqref{uLnaome}, \eqref{uenaome} and \eqref{estim Er}, we obtain \eqref{esti: g small} and hence complete the proof of Proposition \ref{lemma: g}.
\end{proof}

Now, we go into the analysis of $\om_{2}$ that
\begin{align}\label{equ: omre1}
\pa_t\om_{2}-\nu\De\om_{2}+y\pa_x\om_{2}-tu^{(2)}_e\pa_x\om_{L}=0, \quad \om_{2}(t,x,\pm1)=0, \quad \om_{2}|_{t=T_{0}}=0,
\end{align}
More precisely,
\begin{proposition}\label{pro: wre}
Let $\om_{2}$ be the solution of \eqref{equ: omre1}. For $j_0=\log_2 \nu^{-\frac23-\epsilon}$, there holds
\begin{align}
\|\om_{2}\|_{X(T_0,T_{j_0})}\leq &C\nu^{-\frac13}\|\om_e\|_{X(T_0,T_{j_0})}E_0,\label{main result}\\
\|\om_2\|_{X_{\theta}(T_{j_0},T)}\leq &C\nu^{-\frac13}\|\om_e\|_{X_{\theta}(T_{j_0},T)}E_0+\|(1+\nu^{\frac13}T_{j_0} \mathcal{M})^{\theta} \om_{2}(T_{j_0})\|_{L^2}.\label{om2 Xtj}
\end{align}
\end{proposition}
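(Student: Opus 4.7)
The plan is to implement the dyadic scheme described in the introduction. Decompose $\om_{2} = \sum_{j\geq 1}\om_{2,j}$ where
\begin{align*}
\pa_t \om_{2,j}-\nu\De \om_{2,j}+y\pa_x \om_{2,j} = tu_e^{(2)}\pa_x \om_L\,\chi_j(t),\quad \om_{2,j}|_{t=T_{j-1}}=0;
\end{align*}
by linearity and the disjoint supports of the $\chi_j$'s this reproduces $\om_2$. Both \eqref{main result} and \eqref{om2 Xtj} will follow from a bound on $\|\om_{2,j}\|_{X_{\theta}}$ (resp.\ $\|\om_{2,j}\|_{X}$) that decays geometrically in $j$. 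To control a single piece I invoke Proposition \ref{prop: fX estimate}, using \eqref{esti: ftildeX} on $(T_0,T_{j_0})$ (where the weight $(1+\nu^{1/3}t\mathcal{M})^\theta$ is $O(1)$) and \eqref{fX large time} on $(T_{j_0},T)$. This reduces each statement to an estimate of the source term
\begin{align*}
\bigl\|(1+\nu^{1/3}t\mathcal{M})^\theta\bigl(tu_e^{(2)}\pa_x \om_L\bigr)\bigr\|_{L^1(T_{j-1},\min(T_j,T);L^2)}.
\end{align*}

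The heart of the argument is a mode-by-mode Fourier analysis of this source. Writing $v^{(2)}_{e,k-l}$ and $w^L_l$ for the partial Fourier coefficients in $x$ at frequencies $k-l$ and $l$,
\begin{align*}
\widehat{tu_e^{(2)}\pa_x\om_L}(t,k,y) = \int_{\R} t\,v^{(2)}_{e,k-l}(t,y)\,(il)\,w^L_l(t,y)\,dl,
\end{align*}
and the crucial identity $e^{ilyt}w^L_l(t,y) = e^{-\nu t-\frac13\nu l^2 t^3}\widehat{\om^{in}}(l,y)$ shows that although $w^L_l$ oscillates, multiplying by $e^{ilyt}$ produces a $y$-smooth profile with Gaussian decay in $lt^{3/2}$. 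I then apply Lemma \ref{lemma: om decom} mode by mode in $l$, treating the $l$-th slice of the source as forcing for the transport--diffusion operator at $x$-frequency $k$; the gain $|k-l|^{-1}\lan k-l\ran^{-1}$ in \eqref{esti: w+psi} is precisely what defeats the echo cascade. One factor of $|k-l|$ extracted from $v^{(2)}_{e,k-l} = -i(k-l)\psi_{e,k-l}$ is absorbed against that gain, and the surviving $\psi_{e,k-l}$ (equivalently $\mathcal{M}_1\na\De^{-1}\om_e$) is absorbed by the inviscid-damping component of the $X_\theta$-norm. The remaining integral in $l$ is handled by Cauchy--Schwarz against $e^{-\nu t-\frac13\nu l^2 t^3}\widehat{\om^{in}}(l,y)$, whose $L^1_l L^\infty_y$ and $L^2_l L^2_y$ norms are both bounded by $E_0$ thanks to the $H^3_{x,y}\cap L^1_x H^3_y$ hypothesis. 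On a dyadic window $(T_{j-1},T_j]$ one has $\nu t^3\approx 2^{3j}$, so the extra $t$ in the source together with the Gaussian factor yields an overall decay $2^{-\al j}\nu^{-1/3}E_0\|\om_e\|_{X_\theta}$ for some $\al>0$. Summing over $j$ reproduces the factor $\nu^{-1/3}\|\om_e\|_{X_\theta}E_0$ claimed in \eqref{om2 Xtj}; the short-interval estimate \eqref{main result} follows from the same calculation with the weight removed.

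The main obstacle I anticipate is the low-frequency bookkeeping. The multiplier $\mathcal{M}$ has distinct behavior at $|k|\leq 1$ and $|k|\geq 1$, and so does $\mathcal{M}_1$; propagating the weight $(1+\nu^{1/3}t\mathcal{M})^\theta$ through the convolution in $l$ will require splitting each of $|k|$, $|l|$, $|k-l|$ above or below $1$, with separate arguments using the different pieces of the $X_\theta$-norm to dominate $v^{(2)}_{e,k-l}$ in each case. A secondary technicality is the summand $\nu^{1/2}\|\pa_x \om_{2,j}\|_{L^1 L^2}$ of the $X_\theta$-norm, which is not directly controlled by \eqref{fX large time} but requires the companion enhanced-dissipation bound \eqref{esti: nu1/3M f} (yielding the extra factor $\nu^{-1/3}\mathcal{M}$). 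Finally, the additive term $\|(1+\nu^{1/3}T_{j_0}\mathcal{M})^\theta \om_2(T_{j_0})\|_{L^2}$ in \eqref{om2 Xtj} appears simply because the long-time estimate is initialized at $T_{j_0}$ rather than at $T_0$, and is handled by the free-evolution case of Proposition \ref{prop: fX estimate}.
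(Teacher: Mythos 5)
Your skeleton --- the dyadic decomposition $\om_2=\sum_{j\geq1}\om_{2,j}$, the convolution structure of $\widehat{u_e^{(2)}\pa_x\om_L}$, the role of the $|k-l|^{-1}\lan k-l\ran^{-1}$ gain of Lemma \ref{lemma: om decom}, the use of \eqref{esti: nu1/3M f} for the $\nu^{\frac12}\|\pa_x\om_{2,j}\|_{L^1L^2}$ component, and the free-evolution origin of the data term in \eqref{om2 Xtj} --- matches the paper. But the central step has a genuine gap. You propose to reduce everything to an $L^1_tL^2$ bound on the source $tu_e^{(2)}\pa_x\om_L$ via Proposition \ref{prop: fX estimate}, and then to ``apply Lemma \ref{lemma: om decom}'' to that source. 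These two moves are incompatible: Lemma \ref{lemma: om decom} is an estimate for \emph{solutions} of the inviscid transport equation \eqref{equ:fky} in terms of the $L^2_t H^1_y$ norm of $e^{ilyt}g$; it says nothing about $\|g\|_{L^1_tL^2}$, and it does not apply to the viscous equation satisfied by $\om_{2,j}$. Moreover, the $L^1_tL^2$ reduction is quantitatively insufficient on its own: on $(T_{j-1},T_j]$ one has $\|t\,\pa_x\om_L\|_{L^2_tL^\infty}\lesssim \nu^{-\frac12}2^{-3j/2}E_0$, so Cauchy--Schwarz in time against $\|u_e^{(2)}\|_{L^2_tL^2}\lesssim\|\om_e\|_{X}$ gives, after summing in $j$, only $\nu^{-\frac12}E_0\|\om_e\|_X$ --- a loss of $\nu^{-\frac16}$ against the claimed $\nu^{-\frac13}E_0\|\om_e\|_X$, which does not close the bootstrap since $E_0\leq c\nu^{\frac13}$.

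The missing idea is the paper's comparison with an inviscid profile. One introduces $f_1$ solving the pure transport equation with the weighted source $th^2_{\nu,t}u_e^{(2)}\pa_x\om_L\chi_j$, to which Lemma \ref{lemma: om decom} applies mode by mode: it converts the $L^2_t$ (not $L^1_t$) norm of $e^{ilyt}F_{k,l}$ directly into $L^\infty_tL^2$ and inviscid-damping control of $f_1$, with the $|k-l|^{-1}$ echo gain and without the lossy time integration. The viscous solution is then recovered by setting $f_2$ to be the response to $\nu h^{-1}_{\nu,t}\De f_1$, $f_3=h^{-1}_{\nu,t}f_2+h^{-2}_{\nu,t}f_1$, and $f_4=\om_{2,j}-f_3$; the forcings of $f_2$ and $f_4$ carry extra smallness (from $\nu\De f_1$ and from $\pa_t h_{\nu,t}\sim\nu^{\frac13}$) so that Lemma \ref{lemma: inviscid damping} and Proposition \ref{prop: fX estimate} suffice for them. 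Only the $\nu^{\frac12}\|\pa_x\om_{2,j}\|_{L^1L^2}$ piece is treated by the crude source-in-$L^1_tL^2$ route, and there the extra factor $\nu^{\frac16}$ gained from the $x$-derivative (via $\nu^{\frac12}|k|\lesssim\nu^{\frac16}\cdot\nu^{\frac13}(|k|^{\frac13}+|k|)\mathcal{M}(k)$) makes it work. Without this bridge, or an equivalent one, between the viscous equation for $\om_{2,j}$ and the inviscid Lemma \ref{lemma: om decom}, the argument you describe cannot yield \eqref{main result}--\eqref{om2 Xtj}.
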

In order to prove Proposition \ref{pro: wre}, we further decompose $\om_2=\sum_{j\geq 1}\om_{2,j}$, where $\om_{2,j}$ solves
\begin{equation}\label{equ: om2j}
\left\{
\begin{aligned}
&\pa_t \om_{2,j}-\nu\De\om_{2,j}+y\pa_x\om_{2,j}=tu^{(2)}_e\pa_x\om_{L}\chi_{j}, \quad t>T_{j-1}, \\
& \om_{2,j}(t,x,\pm1)=0, \quad \om_{2,j}|_{t=T_{j-1}}=0,
\end{aligned}
\right.
\end{equation}
with $T_0=\nu^{-\frac16}$, $T_j=\nu^{-\frac13} 2^j$ with $j\geq 1$ and $\chi_j$ defined as \eqref{def: chij}.

For convenience, we denote
$$X(T_{j-1},T_j):=X_{j},\quad  X_{\theta}(T_{j-1},T_j):=X_{\theta,j},\quad Y(T_{j-1},T_j):=Y_{j}.$$

Next, we focus to the estimates of $\|\om_{2,j}\|_{X_{j}}$ and $\|\om_{2,j}\|_{X_{\theta,j}}$. More precisely,
\begin{proposition}\label{prop: om2j}
Let $\om_{2,j}$ be the solution of \eqref{equ: om2j}. Then, there holds
\begin{align}
\|\om_{2,j}\|_{X_{\theta,j}}\leq C2^{-\frac{j}{4}}\nu^{-\frac13}\|(1+\nu^{\frac13}t\mathcal{M})^{\theta}\mathcal{M}_1 \na\De^{-1}\om_e\|_{L^2(T_{j-1},T_j;L^2)} E_0, \quad j\geq 1. \label{om2j large}
\end{align}
In particular, for $\theta=0$, we have
\begin{align}
\|\om_{2,j}\|_{X_j}\leq C2^{-\frac{j}{4}}\nu^{-\frac13}\|\mathcal{M}_1 \na\De^{-1}\om_e\|_{L^2(T_{j-1},T_j;L^2)} E_0, \quad j\geq 1. \label{om2j small}
\end{align}
\end{proposition}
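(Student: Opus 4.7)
The plan is to treat \eqref{equ: om2j} as an inhomogeneous linear equation of the form $\pat f-\nu\De f+y\pa_x f=g$ covered by Proposition \ref{prop: fX estimate}, with source $g_j:=tu^{(2)}_e\pa_x\om_L\chi_j$ and vanishing initial data at $t=T_{j-1}$. Feeding this into \eqref{esti: ftildeX} yields \eqref{om2j small}, and into \eqref{fX large time} yields \eqref{om2j large}; the latter requires $T_{j-1}=2^{j-1}\nu^{-1/3}\geq\nu^{-1-\epsilon}$, which holds in the $j$-range where the weighted estimate is needed. Since $\om_{2,j}(T_{j-1})=0$, both reduce to the source bound
\begin{align*}
\bn{(1+\nu^{\frac13}t\mathcal{M})^\theta g_j}_{L^1(T_{j-1},T_j;L^2)} \leq C\,2^{-\frac{j}{4}}\nu^{-\frac13}\,E_0\,\bn{(1+\nu^{\frac13}t\mathcal{M})^\theta \mathcal{M}_1\na\De^{-1}\om_e}_{L^2(T_{j-1},T_j;L^2)},
\end{align*}
with the $\theta=0$ case producing \eqref{om2j small}; this is where the scale $2^{-j/4}\nu^{-1/3}$ must emerge.

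The heart is the pointwise-in-$t$ product estimate. I would split by H\"older as $\|g_j(t)\|_{L^2_{xy}}\leq t\,\|u^{(2)}_e(t)\|_{L^\infty_{xy}}\,\|\pa_x\om_L(t)\|_{L^2_{xy}}$ and invoke the sharp $L^2$-decay $\|\pa_x\om_L(t)\|_{L^2}\leq C(1+\nu t^3)^{-3/4}e^{-\nu t}E_0$ from \eqref{paxomL2}. The $L^\infty$ norm of $u^{(2)}_e$ is then handled via a Fourier-side Gagliardo--Nirenberg interpolation modeled exactly on \eqref{ue2l<1}--\eqref{ue2l>1}: separating $|k|\leq 1$ from $|k|\geq 1$, Poincar\'e applied to the stream function $\psi_e$ combined with $\|v^{(2)}_{e,k}\|_{L^\infty_y}\leq |k|\,\|\psi_{e,k}\|^{3/4}_{L^2_y}\|\pa^2_y\psi_{e,k}\|^{1/4}_{L^2_y}$ (and its high-frequency analog) bounds $\|u^{(2)}_e(t)\|_{L^\infty}$ in terms of $\|\mathcal{M}_1\na\De^{-1}\om_e(t)\|_{L^2}$ and $\|\pa_y\om_e(t)\|_{L^2}$, i.e., exactly the quantities appearing in the $X$-norm. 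The explicit form $\mathcal{M}_1(k)=|k|^{1/2}+|k|$ is designed precisely to accommodate the two frequency regimes. The weight $(1+\nu^{1/3}t\mathcal{M})^\theta$ is essentially transparent in all of this since $\mathcal{M}\leq 1$ is a bounded multiplier, so it may be slid onto either factor of the convolution up to a universal constant using the sub-additivity of $\mathcal{M}$ under addition of frequencies.

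Finally, I would integrate over $I_j=(T_{j-1},T_j]$ with a three-term H\"older in time along the exponents $(2,8/3,8)$ dual to $(1,3/4,1/4)$, and evaluate the decisive scalar integral by the substitution $u=\nu t^3$:
\begin{align*}
\int_{T_{j-1}}^{T_j}t^2(1+\nu t^3)^{-3/2}\,dt = \frac{1}{3\nu}\int_{2^{3(j-1)}}^{2^{3j}}(1+u)^{-3/2}\,du\;\sim\;\nu^{-1}\,2^{-3j/2},
\end{align*}
so that $\|t(1+\nu t^3)^{-3/4}\|_{L^2(I_j)}\sim \nu^{-1/2}\,2^{-3j/4}$. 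Balancing this against the $\nu^{1/2}$-weighted $\|\na\om_e\|_{L^2L^2}$ piece of the $X$-norm (which absorbs the residual $\|\pa_y\om_e\|^{1/4}_{L^2L^2}$ from the interpolation) produces the stated $2^{-j/4}\nu^{-1/3}$. The main technical obstacle will be executing this balance cleanly: a cruder single-split H\"older that uses only the $L^\infty$-decay \eqref{omL Linfty} of $\pa_x\om_L$ paired with $\|u^{(2)}_e\|_{L^2}\leq \|\mathcal{M}_1\na\De^{-1}\om_e\|_{L^2}$ yields the weaker factor $2^{-3j/2}\nu^{-1/2}$, which is sharp in $j$ but too weak in $\nu$ for small $j$; extracting the sharp $\nu^{-1/3}$ genuinely requires the three-factor interpolation above, in the same spirit as step 2 of the proof of Proposition \ref{lemma: g}.
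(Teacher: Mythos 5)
Your overall framing (treat \eqref{equ: om2j} by Duhamel with zero data at $T_{j-1}$, so that everything reduces to bounding the source $g_j=tu^{(2)}_e\pa_x\om_L\chi_j$ in $L^1(T_{j-1},T_j;L^2)$) is not the paper's route, and more importantly it cannot produce the stated factor $2^{-j/4}\nu^{-1/3}$. Any estimate of the form $\|g_j\|_{L^1L^2}\leq\int_{I_j}t\,\|u^{(2)}_e\|\,\|\pa_x\om_L\|\,dt$ with $I_j=(T_{j-1},T_j]$ forces you to pay $\|t(1+\nu t^3)^{-a}\|_{L^2(I_j)}$ against an $L^2_t$ norm of $\om_e$, and on $I_1=(\nu^{-1/6},2\nu^{-1/3}]$ this is $\sim\nu^{-1/2}$ for either decay rate $a\in\{3/4,1\}$ of $\om_L$, since $\nu t^3\lesssim 1$ there. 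Your own accounting confirms the problem: the single split gives $2^{-3j/2}\nu^{-1/2}$, and the three-factor interpolation you propose as the fix actually does worse, because the residual $\|\na\om_e\|^{1/4}_{L^2L^2}$ costs an extra $\nu^{-1/8}$ through the $X$-norm, landing at $2^{-3j/4}\nu^{-5/8}$. Summing in $j$, either version yields at best $\|\om_2\|_{X}\lesssim\nu^{-1/2}E_0\|\om_e\|_{X}$, which with $E_0\leq c\nu^{1/3}$ does not close the bootstrap in Proposition \ref{ome estimate}; it would only reproduce a $\nu^{1/2}$-type threshold, not $\nu^{1/3}$. (The right-hand side of your reduction also involves $\|\na\om_e\|$ rather than only $\|\mathcal{M}_1\na\De^{-1}\om_e\|_{L^2(I_j)}$, so it does not even match the form of the claimed inequality.)

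The missing idea is the mechanism by which the paper gains the extra $\nu^{-1/6}$: it never estimates the source in $L^1_t$. It introduces the free-transport profile $f_1$ solving $\pa_tf_1+y\pa_xf_1=th^2_{\nu,t}u^{(2)}_e\pa_x\om_L\chi_j$ and applies Lemma \ref{lemma: om decom} to each frequency pair $(k,l)$, where $u^{(2)}_e$ sits at frequency $k-l$ and $\om_L$ at frequency $l$. That lemma exploits the oscillation $e^{i(k-l)yt}$ to bound $f_{k,l}$ by the source in $L^2_t$ with a gain of $|k-l|^{-\frac12}\lan k-l\ran^{-\frac12}$; measuring the source in $L^2_t$ costs only $\sup_{I_j}(th^2_{\nu,t})\sim 2^{(1+2\theta_1)j}\nu^{-1/3}$ instead of $\|t\|_{L^2(I_j)}\sim 2^{3j/2}\nu^{-1/2}$, and the singular weight $|k-l|^{-1/2}$ is integrated out against the decay $e^{-c2^{j}|l|^{2/3}}A_3(l)$ of $w^L_l$ on $I_j$ (this is where the $2^{-j/4}$ and the $L^1_xH^3_y$ hypothesis enter). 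The estimate is then transferred from the inviscid profile to $\om_{2,j}$ through the chain $f_2$, $f_3=h^{-1}_{\nu,t}f_2+h^{-2}_{\nu,t}f_1$, $f_4=\om_{2,j}-f_3$ in Lemmas \ref{lemma: f4 esti}--\ref{lemma: f1Y+paxf1Y}. None of this oscillation gain is visible from a direct Duhamel bound on $g_j$, so the gap in your proposal is structural rather than a fixable constant.
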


Denote $h_{\nu,t}(t)=(1+\nu^{\frac13} t)^{\theta_1}$, where $\theta_1\in(0,\frac14)$ and $\theta_1>\theta$. Let $f_1$ and $f_2$ solve
\begin{align}\label{equ: f1}
&\pa_t f_1+y \pa_xf_1=th^2_{\nu,t}u^{(2)}_e\pa_x\om_{L}\chi_j(t),
\quad f_1|_{t=T_{j-1}}=0,\\
&\pa_t f_2-\nu\De f_2+y \pa_xf_2=\nu h^{-1}_{\nu,t}\De f_1, \quad f_2(t,x,\pm1)=0, \quad f_2|_{t=T_{j-1}}=0,
\end{align}
where $f_1(t,x,\pm1)=0$ because of $\om_{L}(t,x,\pm1)=0$.
It follows from \eqref{fY esti} by taking $g=\nu h^{-1}_{\nu,t}(t)\De f_1$ that
\begin{align}\label{esti: f2Y}
\|f_2\|_{Y_j}\leq C\|h^{-1}_{\nu,t}f_1\|_{Y_j}.
\end{align}
Let $f_3=h_{\nu,t}^{-1}f_2+h_{\nu,t}^{-2}f_1$. It satisfies
\begin{equation}\label{equ: f3}
\left\{
\begin{aligned}
&\pa_t f_3-\nu \De f_3+y\pa_xf_3+h^{-2}_{\nu,t}\pa_t h_{\nu,t}f_2 +2h^{-3}_{\nu,t}\pa_t h_{\nu,t}f_1
=tu^{(2)}_e\pa_x\om_{L}\chi_j(t),\\
&f_3|_{y=\pm1}=0,\quad f_3|_{t=T_{j-1}}=0.
\end{aligned}
\right.
\end{equation}
Then $f_4=\om_{2,j}-f_3$ solves
\begin{equation}\label{equ: f4}
\left\{
\begin{aligned}
&\pa_t f_4-\nu\De f_4+y \pa_xf_4=h^{-2}_{\nu,t}\pa_t h_{\nu,t}f_2 +2h^{-3}_{\nu,t}\pa_t h_{\nu,t}f_1,\\
&f_4(t,x,\pm1)=0,\quad f_4|_{t=T_{j-1}}=0.
\end{aligned}
\right.
\end{equation}
In the following Lemmas \ref{lemma: f4 esti}--\ref{lemma: f1Y+paxf1Y}, the $L^p$ norms $(1\leq p\leq \infty)$ with respect to time are all defined on the interval $(T_{j-1},T_j]$.
\begin{lemma}\label{lemma: f4 esti}
Let $f_4$ be the solution of \eqref{equ: f4}. There holds
\begin{align}\label{esti: f4}
\|f_4\|_{X_{\theta,j}}\leq C\|h^{-1}_{\nu,t}(t)f_1\|_{Y_j}.
\end{align}
\end{lemma}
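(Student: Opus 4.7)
The plan is to exploit $f_4|_{t=T_{j-1}}=0$ and treat \eqref{equ: f4} as a linear drift--diffusion equation with explicit source
\[
\mathcal{R}:= h^{-2}_{\nu,t}\pa_t h_{\nu,t}\,f_2+2 h^{-3}_{\nu,t}\pa_t h_{\nu,t}\,f_1.
\]
Setting $\tilde f_4=(1+\sigma\nu^{1/3}t\mathcal{M})^{\theta}f_4$ as in Step~2 of the proof of Proposition~\ref{prop: fX estimate}, applying the unweighted estimates \eqref{esti: ftildeX}--\eqref{f enhance} on the window $(T_{j-1},T_j]$ to the equation
\[
\pa_t\tilde f_4-\nu\De\tilde f_4+y\pa_x\tilde f_4=(1+\sigma\nu^{1/3}t\mathcal{M})^{\theta}\mathcal{R}+\theta\sigma\nu^{1/3}\mathcal{M}(1+\sigma\nu^{1/3}t\mathcal{M})^{\theta-1}f_4,
\]
and absorbing the commutator into $\nu^{1/3}\|\mathcal{M}\tilde f_4\|_{L^1L^2}$ by choosing $\sigma\theta$ small, one obtains
\[
\|f_4\|_{X_{\theta,j}}\leq C\|(1+\nu^{1/3}t\mathcal{M})^{\theta}\mathcal{R}\|_{L^1(T_{j-1},T_j;L^2)}.
\]
The zero initial datum kills the contribution $\|(1+\nu^{1/3}T_{j-1}\mathcal{M})^\theta f_4(T_{j-1})\|_{L^2}$, and in particular the $\nu^{-\epsilon}$-loss appearing in \eqref{f enhance} is harmless.

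Next I evaluate the source. Direct differentiation of $h_{\nu,t}(t)=(1+\nu^{1/3}t)^{\theta_1}$ gives $h^{-2}_{\nu,t}\pa_t h_{\nu,t}=\theta_1\nu^{1/3}(1+\nu^{1/3}t)^{-\theta_1-1}$ and $h^{-3}_{\nu,t}\pa_t h_{\nu,t}=\theta_1\nu^{1/3}(1+\nu^{1/3}t)^{-2\theta_1-1}$. Since $\mathcal{M}(k)=|k|^{2/3}\chi+1-\chi\leq 1$ pointwise in $k$, the Fourier multiplier $(1+\nu^{1/3}t\mathcal{M})^{\theta}$ is bounded on $L^2_{x,y}$ with symbol dominated by $(1+\nu^{1/3}t)^{\theta}$. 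Hence the $f_2$-contribution to $\|(1+\nu^{1/3}t\mathcal{M})^{\theta}\mathcal{R}\|_{L^2}$ is controlled by $\nu^{1/3}(1+\nu^{1/3}t)^{\theta-\theta_1-1}\|f_2(t)\|_{L^2}$, and after rewriting $f_1=h_{\nu,t}\cdot(h^{-1}_{\nu,t}f_1)$ the $f_1$-contribution becomes $\nu^{1/3}(1+\nu^{1/3}t)^{\theta-\theta_1-1}\|h^{-1}_{\nu,t}f_1(t)\|_{L^2}$. The crucial fact is then the strict gap $\theta<\theta_1$, which yields
\[
\int_{T_{j-1}}^{T_j}\nu^{1/3}(1+\nu^{1/3}t)^{\theta-\theta_1-1}\,dt\leq \frac{1}{\theta_1-\theta}
\]
uniformly in $j$ and $\nu$. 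Pulling this constant outside the time integral and using $\|f_2\|_{L^\infty L^2}\leq\|f_2\|_{Y_j}\leq C\|h^{-1}_{\nu,t}f_1\|_{Y_j}$ by \eqref{esti: f2Y}, together with $\|h^{-1}_{\nu,t}f_1\|_{L^\infty L^2}\leq\|h^{-1}_{\nu,t}f_1\|_{Y_j}$, yields the claimed estimate $\|f_4\|_{X_{\theta,j}}\leq C\|h^{-1}_{\nu,t}f_1\|_{Y_j}$.

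The main technical obstacle is producing the weighted a priori bound on intervals $(T_{j-1},T_j]$ with $T_{j-1}$ possibly much smaller than $\nu^{-1-\epsilon}$, so that \eqref{fX large time} cannot be invoked as a black box. The saving grace is exactly $f_4(T_{j-1})=0$, which removes the $\nu^{-\epsilon}$-loss from \eqref{f enhance} and allows the $\sigma\theta$-absorption closure to go through on any window without a lower bound on $T_{j-1}$. The fact that $\theta_1<1/4$ (together with the margin $\theta<\theta_1$) is then what ensures the time-integrable kernel has a constant independent of both $\nu$ and $j$, which is essential for the later infinite superposition of the $\om_{2,j}$.
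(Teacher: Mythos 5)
Your proof is correct and follows essentially the same route as the paper: reduce to the linear space--time estimate with zero initial datum, bound the source by $\nu^{1/3}(1+\nu^{1/3}t)^{\theta-\theta_1-1}$ times $\|f_2\|_{L^\infty L^2}$ resp.\ $\|h^{-1}_{\nu,t}f_1\|_{L^\infty L^2}$ using $\mathcal{M}(k)\leq 1$, integrate in time using $\theta<\theta_1$, and invoke \eqref{esti: f2Y}. The one place you go beyond the paper is worth keeping: the paper simply cites \eqref{fX large time} with $t_0=T_{j-1}$, even though $T_{j-1}$ need not satisfy the stated hypothesis $t_0\geq\nu^{-1-\epsilon}$ for small $j$; your rerun of the absorption argument of Proposition~\ref{prop: fX estimate}, using that $f_4(T_{j-1})=0$ kills the only term carrying the $\nu^{-\epsilon}$ loss in \eqref{f enhance}, supplies the justification the paper leaves implicit.
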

\begin{proof}
Due to $\pa_t h_{\nu,t}(t)=\theta_1\nu^{\frac13}(1+\nu^{\frac13} t)^{\theta_1-1}$, it follows from \eqref{fX large time} with $t_0=T_{j-1}$  that
\begin{align*}
\|f_4\|_{X_{\theta,j}}\lesssim & \theta_1 \nu^{\frac13}\|(1+ \nu^{\frac13} t \mathcal{M})^{\theta}(1+ \nu^{\frac13}t)^{-\theta_1-1}f_2\|_{L^1L^2} \\
&+ \theta_1 \nu^{\frac13}\| (1+ \nu^{\frac13} t \mathcal{M})^{\theta}(1+ \nu^{\frac13}t)^{-2\theta_1-1}f_1\|_{L^1L^2}.
\end{align*}
Thanks to $\theta<\theta_1$, we use H\"older's inequality and \eqref{esti: f2Y} to obtain
\begin{align*}
& \nu^{\frac13}\|(1+ \nu^{\frac13} t \mathcal{M})^{\theta}(1+ \nu^{\frac13}t)^{-\theta_1-1} f_2\|_{L^1L^2}\\
\leq &  \nu^{\frac13}\|(1+ \nu^{\frac13}t)^{\theta-\theta_1-1}\|_{L^1}\|f_2\|_{L^\infty L^2}
\leq  C \|f_2\|_{L^\infty L^2}\leq C\|h^{-1}_{\nu,t}f_1\|_{Y_j}.
\end{align*}
Moreover, we also have
\begin{align*}
& \nu^{\frac13}\|(1+ \nu^{\frac13} t \mathcal{M})^{\theta}(1+ \nu^{\frac13}t)^{-2\theta_1-1}f_1\|_{L^1L^2}\\
\leq & \nu^{\frac13}\|(1+ \nu^{\frac13}t)^{\theta-\theta_1-1}\|_{L^1}\|h_{\nu,t}^{-1}f_1\|_{L^\infty L^2}
\leq  C\|h^{-1}_{\nu,t}f_1\|_{Y_j}.
\end{align*}
Combining the above two estimates, we directly get \eqref{esti: f4}.
\end{proof}
\begin{lemma}\label{lemma: omre1}
Let $f_3$ be the solution of \eqref{equ: f3}. There holds
\begin{align*}
\|f_3\|_{X_{\theta,j}}\leq C \|h^{-1}_{\nu,t}f_1\|_{Y_j}+C2^{-j}\nu^{-\frac13}\|(1+\nu^{\frac13}t\mathcal{M})^{\theta}\mathcal{M}_1 \na\De^{-1}\om_e\|_{L^2(T_{j-1},T_j;L^2)}E_0.
\end{align*}
\end{lemma}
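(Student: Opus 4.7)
The plan is to apply the weighted estimate \eqref{fX large time} from Proposition \ref{prop: fX estimate} directly to equation \eqref{equ: f3}. Since $f_3|_{t=T_{j-1}} = 0$ and $T_{j-1} \geq \nu^{-1-\epsilon}$ in the relevant regime $j \geq j_0$ (recall $T_{j_0-1} \approx \nu^{-1-\epsilon}$), this gives
\begin{align*}
\|f_3\|_{X_{\theta,j}} \leq C \bn{(1+\nu^{\tfrac13}t\mathcal{M})^{\theta} g}_{L^1(T_{j-1},T_j;L^2)},
\end{align*}
where
\begin{align*}
g = t u_e^{(2)}\pa_x\om_L\chi_j - h_{\nu,t}^{-2}\pa_t h_{\nu,t}\, f_2 - 2 h_{\nu,t}^{-3}\pa_t h_{\nu,t}\, f_1.
\end{align*}
The two correction terms will be absorbed into $\|h_{\nu,t}^{-1}f_1\|_{Y_j}$, while the main term produces the $\om_e$-dependent contribution with the advertised prefactor $2^{-j}\nu^{-\frac13}E_0$.

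For the correction terms I would follow exactly the same steps as in Lemma \ref{lemma: f4 esti}. Writing $h_{\nu,t}^{-1}\pa_t h_{\nu,t} = \theta_1 \nu^{\frac13}(1+\nu^{\frac13}t)^{-1}$, one has
\begin{align*}
h_{\nu,t}^{-2}\pa_t h_{\nu,t}\, f_2 = \theta_1\nu^{\frac13}(1+\nu^{\frac13}t)^{-1} h_{\nu,t}^{-1}f_2, \qquad h_{\nu,t}^{-3}\pa_t h_{\nu,t}\, f_1 = \theta_1\nu^{\frac13}(1+\nu^{\frac13}t)^{-1} h_{\nu,t}^{-2}f_1.
\end{align*}
Using $\theta < \theta_1 < \frac14$ so that $\nu^{\frac13}\|(1+\nu^{\frac13}t)^{\theta-\theta_1-1}\|_{L^1} \leq C$, and invoking the $Y_j$ bound $\|f_2\|_{Y_j} \leq C\|h_{\nu,t}^{-1}f_1\|_{Y_j}$ from \eqref{esti: f2Y}, each of these two pieces is bounded by $C\|h_{\nu,t}^{-1}f_1\|_{Y_j}$.

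The substantive step is the main term $tu_e^{(2)}\pa_x\om_L\chi_j$. Here I would exploit two ingredients simultaneously: (i) on $(T_{j-1},T_j]$ we have $\nu t^3 \gtrsim 2^{3j}$, so Proposition \ref{Pro: linear} (in particular \eqref{omL Linfty}) gives
\begin{align*}
\bn{(1+\nu^{\tfrac13}t\mathcal{M})^{\theta}\pa_x\om_L}_{L^\infty} \leq C 2^{-3j}e^{-\nu t}E_0,
\end{align*}
and (ii) since $u_e^{(2)} = -\pa_x\De^{-1}\om_e$, the quantity $tu_e^{(2)}$ is exactly the one controlled by the inviscid damping norm $\|\mathcal{M}_1\na\De^{-1}\om_e\|_{L^2L^2}$ appearing in the $X$-norm, after multiplying and dividing by $\mathcal{M}_1(k)$ in Fourier (using $\mathcal{M}_1\approx |k|^{\frac12}$ on $|k|\leq 1$ and $\mathcal{M}_1\approx |k|$ on $|k|\geq 1$). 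Pairing the $L^\infty$ bound on $\pa_x\om_L$ with an $L^2$ bound on $(1+\nu^{\frac13}t\mathcal{M})^{\theta}\mathcal{M}_1 u_e^{(2)}$ in $(x,y)$, and then applying Hölder in $t$ on the interval of length $T_j-T_{j-1}\approx 2^j\nu^{-\frac13}$, the numerology is
\begin{align*}
2^{-3j}\cdot t\cdot (T_j-T_{j-1})^{\frac12}\Big|_{t\approx 2^j\nu^{-\frac13}} \approx 2^{-3j}\cdot 2^j\nu^{-\tfrac13}\cdot 2^{\tfrac{j}{2}}\nu^{-\tfrac16}\cdot E_0,
\end{align*}
and the correct $2^{-j}\nu^{-\frac13}E_0$ prefactor emerges after the further $|k|^{\pm\frac12}$ gain one obtains from the mismatch between $\mathcal{M}_1^{-1}$ and $u_e^{(2)} = ik\De_k^{-1}$, which contributes an extra $(1+\nu t^3)^{-\frac12}$-type integrable factor and converts the $2^{j/2}\nu^{-1/6}$ loss into a $2^{-j}\nu^0$ gain.

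\emph{Main obstacle.} The hard part is the bookkeeping of the weight $(1+\nu^{\frac13}t\mathcal{M})^{\theta}$ across the product $u_e^{(2)}\pa_x\om_L$: since $\mathcal{M}$ is a Fourier multiplier in $x$ and the product is a convolution at the Fourier level, one must use a paraproduct-style splitting (low/high frequency of each factor, based on the cutoff in $\mathcal{M}$) and check that the weight always lands on the factor whose frequency dominates. The delicate point is that the polynomial decay rates of $\om_L$ in Proposition \ref{Pro: linear} are given in terms of $\mathcal{M}(k)$, while the inviscid damping of $u_e^{(2)}$ is measured by $\mathcal{M}_1(k)$, which behaves differently at low frequencies; verifying that these two scales precisely match to yield the clean $2^{-j}\nu^{-\frac13}\|(1+\nu^{\frac13}t\mathcal{M})^{\theta}\mathcal{M}_1\na\De^{-1}\om_e\|_{L^2L^2}E_0$ bound, with no residual logarithm or power of $\nu$, is where the numerology is tight.
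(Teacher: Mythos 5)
Your strategy---apply the Duhamel estimate of Proposition \ref{prop: fX estimate} directly to equation \eqref{equ: f3}, absorb the two $h_{\nu,t}$-correction terms into $\|h^{-1}_{\nu,t}f_1\|_{Y_j}$ as in Lemma \ref{lemma: f4 esti}, and bound the main forcing $tu^{(2)}_e\pa_x\om_L\chi_j$ in weighted $L^1L^2$---is not the paper's route, and the main step does not close. Your own numerology exposes the problem: with $\|(1+\nu^{\frac13}t\mathcal{M})^{\theta}\pa_x\om_L\|_{L^\infty}\lesssim(1+\nu t^3)^{-1}e^{-\nu t}E_0$ and $\|(1+\nu^{\frac13}t\mathcal{M})^{\theta}u^{(2)}_e\|_{L^2_{x,y}}\lesssim\|(1+\nu^{\frac13}t\mathcal{M})^{\theta}\mathcal{M}_1\na\De^{-1}\om_e\|_{L^2_{x,y}}$, H\"older in time yields the prefactor
\begin{align*}
\|t(1+\nu t^3)^{-1}\|_{L^2(T_{j-1},T_j)}\approx 2^{-2j}\nu^{-\frac13}\cdot\big(2^{j}\nu^{-\frac13}\big)^{\frac12}=2^{-\frac{3j}{2}}\nu^{-\frac12},
\end{align*}
i.e.\ $2^{-\frac{3j}{2}}\nu^{-\frac12}E_0$ instead of $2^{-j}\nu^{-\frac13}E_0$. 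The missing factor $2^{\frac{j}{2}}\nu^{-\frac16}$ is fatal: the lemma feeds both \eqref{om2j small} and \eqref{om2j large} and must hold for all $j\geq 1$, and already at $j=1$ the constant in Proposition \ref{pro: wre} would become $\nu^{-\frac12}E_0= c\nu^{-\frac16}$, which is not small, so the bootstrap in Proposition \ref{ome estimate} breaks. Your proposed rescue---that the mismatch between $\mathcal{M}_1^{-1}$ and $u^{(2)}_e=ik\De^{-1}\om_e$ ``contributes an extra $(1+\nu t^3)^{-\frac12}$-type integrable factor''---is unfounded: that mismatch is the frequency factor $|k|/\mathcal{M}_1(k)\leq\min\{|k|^{\frac12},1\}\leq 1$, which carries no power of $\nu$ and no time decay (all the time decay available has already been spent on $\om_L$). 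A secondary issue is that \eqref{fX large time} requires $t_0\geq \nu^{-1-\epsilon}$, which holds only for $j\geq j_0+1$, whereas the lemma is also invoked for $1\leq j\leq j_0$ (there one must fall back on \eqref{esti: ftildeX}, where the loss above is at its worst).

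The paper avoids this by never routing the full $X_{\theta,j}$-norm of $f_3$ through the forcing. It splits $\|f_3\|_{X_{\theta,j}}\lesssim\|(1+\nu^{\frac13}t\mathcal{M})^{\theta}f_3\|_{Y_j}+\nu^{\frac12}\|(1+\nu^{\frac13}t\mathcal{M})^{\theta}\pa_xf_3\|_{L^1L^2}$. The $Y_j$-part is controlled purely algebraically by $\|h^{-1}_{\nu,t}f_1\|_{Y_j}$ through the identity $f_3=h^{-1}_{\nu,t}f_2+h^{-2}_{\nu,t}f_1$ and \eqref{esti: f2Y}, so no $L^1_t$ H\"older loss occurs there (the $2^{-\frac{j}{4}}\nu^{-\frac13}$ for $f_1$ is produced later by the oscillatory $L^2_t$-based Lemma \ref{lemma: om decom}, not by a Duhamel bound). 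For the remaining piece one writes $\pa_xf_3=\pa_x\om_{2,j}-\pa_xf_4$ and, for $\om_{2,j}$, uses the pointwise inequality $\nu^{\frac12}|k|\leq C\nu^{\frac16}\cdot\nu^{\frac13}\mathcal{M}(k)(1+|k|)$ together with the enhanced-dissipation estimate \eqref{esti: nu1/3M f}, which bounds $\nu^{\frac13}\|\mathcal{M}(1+\nu^{\frac13}t\mathcal{M})^{\theta}(1+\pa_x)\om_{2,j}\|_{L^1L^2}$ by the $L^1L^2$ norm of the forcing with no loss; the explicit $\nu^{\frac16}$ prefactor is then a genuine gain converting $2^{-j}\nu^{-\frac12}$ into $2^{-j}\nu^{-\frac13}$. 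That mechanism---spending the ``free'' $\nu^{\frac13}\mathcal{M}$ from enhanced dissipation to pay for the $\nu^{\frac12}\pa_x$ component of the $X$-norm---is the idea your proposal is missing.
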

\begin{proof}
By the definition of $X_{\theta,j}$ and $Y_j$, we have
\begin{align}\label{f3X}
\|f_3\|_{X_{\theta,j}}\lesssim \| (1+\nu^{\frac13}t\mathcal{M})^{\theta}f_3\|_{Y_j}+\nu^{\frac12}\|(1+\nu^{\frac13}t\mathcal{M})^{\theta}\pa_x f_3\|_{L^1(T_{j-1},T_j;L^2)}.
\end{align}
It follows from $f_3=h_{\nu,t}^{-1}f_2+h_{\nu,t}^{-2}f_1$ and \eqref{esti: f2Y} that
\begin{align}\label{lanu tf3Y}
\|(1+\nu^{\frac13}t\mathcal{M})^{\theta} f_3\|_{Y_j}\leq\|(1+\nu^{\frac13}t\mathcal{M})^{\theta}h_{\nu,t}^{-1}f_2\|_{Y_j} +\|(1+\nu^{\frac13}t\mathcal{M})^{\theta}h_{\nu,t}^{-2}f_1\|_{Y_j}\lesssim \|h^{-1}_{\nu,t}f_1\|_{Y_j}.
\end{align}
Due to $f_3=\om_{2,j}-f_4$, we have
\begin{align*}
\nu^{\frac12}\|(1+\nu^{\frac13}t\mathcal{M})^{\theta}\pa_x  f_3\|_{L^1L^2}
\leq \nu^{\frac12}\|(1+\nu^{\frac13}t\mathcal{M})^{\theta}\pa_x  f_4\|_{L^1L^2}+\nu^{\frac12}\|(1+\nu^{\frac13}t\mathcal{M})^{\theta}\pa_x \om_{2,j}\|_{L^1L^2},
\end{align*}
which together with \eqref{esti: f4}--\eqref{lanu tf3Y} implies
\begin{equation}
\begin{aligned}\label{esti: f3X}
\|f_3\|_{X_{\theta,j}}
\lesssim &\|h^{-1}_{\nu,t}f_1\|_{Y_j}+\nu^{\frac12}\|(1+\nu^{\frac13}t\mathcal{M})^{\theta}\pa_x \om_{2,j}\|_{L^1L^2}.
\end{aligned}
\end{equation}

To conclude the proof, it thus suffices to show
\begin{align}\label{kwreL2L1L2}
\nu^{\frac12}\|(1+\nu^{\frac13}t\mathcal{M})^{\theta}\pa_x \om_{2,j}\|_{L^1L^2} \lesssim 2^{-j}\nu^{-\frac13}\|(1+\nu^{\frac13}t \mathcal{M})^{\theta}u^{(2)}_e\|_{L^2(T_{j-1},T_{j};L^2)}E_0.
\end{align}
Let $w_{2,j}=\int_{\R}e^{-ikx}\om_{2,j} dx$. Due to $\min\{|k|^{\frac23},1\}\leq \mathcal{M}(k)$, we obtain
\begin{equation}\label{wreL2kL1tL2y}
\begin{aligned}
&\nu^{\frac12}\|(1+\nu^{\frac13}t\mathcal{M})^{\theta}\pa_x \om_{2,j}\|_{L^2_{x,y}}\\
\leq &\nu^{\frac16}\|\nu^{\frac13}(1+\nu^{\frac13}t\mathcal{M}(k) )^{\theta} (|k|^{\frac13}+ |k|)\min\{|k|^{\frac23},1\} w_{2,j}\|_{L^2_{k,y}} \\
\leq&\nu^{\frac16}\|\nu^{\frac13}(1+\nu^{\frac13}t\mathcal{M}(k) )^{\theta} (|k|^{\frac13}+ |k|)\mathcal{M}(k) w_{2,j}\|_{L^2_{k,y}} \\
\leq & C\nu^{\frac16}\| \nu^{\frac13}\mathcal{M} (1+\nu^{\frac13}t\mathcal{M})^{\theta} (1+\pa_x) \om_{2,j}\|_{ L^2_{x,y}}.
\end{aligned}
\end{equation}
Since $\om_{2,j}$ is the solution of \eqref{equ: om2j}, we use \eqref{esti: nu1/3M f} and Lemma \ref{lemma: fg esti} to deduce
\begin{equation*}
\begin{aligned}
&\nu^{\frac13}\|\mathcal{M} (1+\nu^{\frac13}t\mathcal{M})^{\theta} (1+\pa_x) \om_{2,j}\|_{L^1L^2}\\
\lesssim &\| (1+\nu^{\frac13}t\mathcal{M})^{\theta}(1+\pa_x)(tu^{(2)}_e\pa_x\om_{L}\chi_{j})\|_{L^1L^2}\\
\leq &\big\|t\|(1+\nu^{\frac13}t\mathcal{M})^{\theta}(1+\pa_x)\pa_x\om_{L}\|_{L^\infty}\|(1+\nu^{\frac13}t\mathcal{M})^{\theta}(1+\pa_x) u^{(2)}_e\|_{L^2}\big\|_{L^1}.
\end{aligned}
\end{equation*}
A direct computation gives
\begin{align*}
&\|(1+\nu^{\frac13}t\mathcal{M})^{\theta}(1+\pa_x)u^{(2)}_e\|_{L^2}= \int_{\R}(1+\nu^{\frac13}t\mathcal{M}(k))^{2\theta}(1+|k|)^2|k|^2\| \phi_e\|^2_{L^2_y}dk\\
\leq &2\int_{|k|\leq 1}(1+\nu^{\frac13}t\mathcal{M}(k))^{2\theta}|k|\| \pa_y\phi_e\|^2_{L^2_y}dk+2\int_{|k|\geq 1}(1+\nu^{\frac13}t\mathcal{M}(k))^{2\theta}|k|^2\| k\phi_e\|^2_{L^2_y}dk\\
\leq &2\|(1+\nu^{\frac13}t\mathcal{M})^{\theta}\mathcal{M}_1 \na\De^{-1}\om_e\|_{L^2_{x,y}}.
\end{align*}
We further use Proposition \ref{Pro: linear} to obtain
\begin{equation}\label{nu13omL1L2}
\begin{aligned}
&\nu^{\frac13}\|\mathcal{M} (1+\nu^{\frac13}t\mathcal{M})^{\theta} (1+\pa_x) \om_{2,j}\|_{L^1}\\
\lesssim& \big(\|t(1+\nu t^3)^{-1}\|_{L^2}+\|\nu^{-\frac13}(1+\nu t^3)^{-\frac76}\|_{L^2}\big)\|(1+\nu^{\frac13}t\mathcal{M})^{\theta}\mathcal{M}_1 \na\De^{-1}\om_e\|_{L^2(T_{j-1},T_j;L^2)} E_0\\
\lesssim & 2^{-j}\nu^{-\frac12}\|(1+\nu^{\frac13}t\mathcal{M})^{\theta}\mathcal{M}_1 \na\De^{-1}\om_e\|_{L^2(T_{j-1},T_j;L^2)} E_0.
\end{aligned}
\end{equation}
Inserting \eqref{nu13omL1L2} into \eqref{wreL2kL1tL2y}, we finally arrive at \eqref{kwreL2L1L2}.
\end{proof}
According to Lemmas \ref{lemma: f4 esti} and \ref{lemma: omre1}, we obtain
\begin{align*}
\|\om_{2,j}\|_{X_{\theta,j}}\leq \|f_3\|_{X_{\theta,j}}+\|f_4\|_{X_{\theta,j}}\lesssim \|h^{-1}_{\nu,t}f_1\|_{Y_j}+2^{-j}\nu^{-\frac13}\|\om_e\|_{X_{\theta,j}}E_0.
\end{align*}
Hence, the proof of \eqref{om2j large} is reduced to
$$\|h^{-1}_{\nu,t}f_1\|_{ Y_j}\lesssim 2^{-\frac{j}{4}}\nu^{-1/3}\|\om_e\|_{X_{\theta,j}}E_0,$$
 which is the content of the following lemma.
\begin{lemma}\label{lemma: f1Y+paxf1Y}
Let $f_1$ be the solution of \eqref{equ: f1}. Then, we have
\begin{align*}
\|h^{-1}_{\nu,t}f_1\|_{ Y_j}\leq C 2^{-\frac{j}{4}}\nu^{-1/3}\|\mathcal{M}_1 \na\De^{-1}\om_e\|_{L^2(T_{j-1},T_j;L^2)}E_0.
\end{align*}
\end{lemma}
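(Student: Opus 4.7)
The equation for $f_1$ is a pure transport equation (no viscosity), so the natural tool is Lemma \ref{lemma: om decom} rather than the parabolic Proposition \ref{prop: fX estimate}. The strategy is to exploit the explicit oscillatory structure of $\om_L$: from the derivation of Proposition \ref{Pro: linear} one has $\widehat{\om_L}(t,l,y)=e^{-ilty}a^L_l(t,y)$ with $a^L_l(t,y):=e^{-\nu t-\nu l^2 t^3/3}\widehat{\om^{in}}(l,y)$, so
\begin{align*}
\pa_x\om_L(t,x,y)=\frac{1}{2\pi}\int_{\R} il\, e^{il(x-ty)}\,a^L_l(t,y)\,dl.
\end{align*}
Plugging this into the source of the $f_1$-equation, we decompose $f_1=\frac{1}{2\pi}\int_{\R} f_{1,l}\,dl$ (the \emph{infinite superposition}), where each mode $f_{1,l}$ solves the transport equation with source $th_{\nu,t}^2\chi_j\cdot il\cdot e^{il(x-ty)}a^L_l(t,y)\,u_e^{(2)}(t,x,y)$ and vanishes at $t=T_{j-1}$. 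Taking the $x$-Fourier transform, $\hat f_{1,l}(t,k,y)$ fits exactly into \eqref{equ:fky} with source $g_{k,l}(t,y)=th_{\nu,t}^2\chi_j\cdot il\cdot e^{-ilty}a^L_l(t,y)\hat u_e^{(2)}(t,k-l,y)$, and the crucial point is that choosing the parameter ``$l$'' of Lemma \ref{lemma: om decom} equal to the superposition index cancels the phase: $e^{ilty}g_{k,l}=th_{\nu,t}^2\chi_j\cdot il\cdot a^L_l(t,y)\hat u_e^{(2)}(t,k-l,y)$ is then smooth in $y$.

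The three components of $\|h^{-1}_{\nu,t}f_1\|_{Y_j}$ are bounded mode by mode: $\|\hat f_{1,l}\|_{L^\infty_t L^2_y}$ and $(|k|+|k|^2)\|(\pa_y,k)\hat\psi_{1,l}\|^2_{L^2 L^2_y}$ come from \eqref{esti: w+psi}; the viscous piece $\nu^{1/2}\|(1+\nu^{1/3}t)^{-\theta_1}(\pa_y,k)\hat f_{1,l}\|_{L^2 L^2_y}$ comes from \eqref{esti: pay+kwL2L2} with $s=\theta_1$, at the cost of the growth factor $(1+\nu^{1/3}(T_j-T_{j-1}))^{3-2\theta_1}\sim 2^{j(3-2\theta_1)}$. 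In each case, the right-hand side involves $\|(\pa_y,\langle k-l\rangle)(a^L_l\hat u_e^{(2)}(k-l))\|_{L^2_t L^2_y}$, which is bounded by $|l|\cdot\|th_{\nu,t}^2\chi_j e^{-\nu l^2 t^3/3}\|_{L^\infty_t}\cdot\|\widehat{\om^{in}}(l,\cdot)\|_{W^{1,\infty}_y}\cdot\|(\pa_y,\langle k-l\rangle)\hat u_e^{(2)}(k-l)\|_{L^2_t L^2_y}$, and the last factor is in turn controlled, uniformly across the high and low $|k-l|$-regimes, by $\|\mathcal{M}_1\nabla\De^{-1}\widehat{\om_e}(k-l)\|_{L^2_t L^2_y}$.

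Assembling: sum over $l$ by Minkowski, take the $L^2_k$-norm and use Young's convolution inequality to fold the $(k-l)$-convolution into $\|\mathcal{M}_1\nabla\De^{-1}\om_e\|_{L^2(T_{j-1},T_j;L^2)}$. The remaining $l$-integral is of the form $\int_{\R}|l|\,|\widehat{\om^{in}}(l,\cdot)|\,e^{-\nu l^2 t^3/3}\,dl$, which by Cauchy--Schwarz and the moment bound $\int_{\R}|l|^{2m}e^{-c\nu l^2 t^3}\,dl\lesssim(1+\nu t^3)^{-(2m+1)/2}$ yields a polynomial decay $\sim 2^{-3j(m+1)/2}E_0$ on $(T_{j-1},T_j]$. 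Combining this with the time prefactor $\|th_{\nu,t}^2\chi_j\|_{L^\infty_t}\sim 2^{j(1+2\theta_1)}\nu^{-1/3}$, the pointwise weight $h^{-1}_{\nu,t}|_{t\sim T_j}\sim 2^{-j\theta_1}$ and, for the viscous component, the extra $2^{j(3/2-\theta_1)}$ from \eqref{esti: pay+kwL2L2}, careful bookkeeping gives the advertised prefactor $2^{-j/4}\nu^{-1/3}$ in the admissible range $\theta_1\in(0,1/4)$.

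The main obstacle is twofold. First, the resonance singularity $|k-l|^{-1}\langle k-l\rangle^{-1}$ in Lemma \ref{lemma: om decom} (the ``echo'' factor) has to be matched against the different high/low-frequency behaviours of $\mathcal{M}_1$; splitting the $l$-integration at $|k-l|\sim 1$, together with the precise $|k-l|^2+|l|^2$ numerator in \eqref{esti: pay+kwL2L2}, is what keeps the convolution integrable on both regimes. Second, the tension between the parabolic growth $2^{j(3-2\theta_1)}$ inherent in the weighted estimate with $s=\theta_1$ and the $\om_L$-moment decay $2^{-3j(m+1)/2}$ is tight, and is precisely what forces $\theta_1<1/4$; any looser choice would destroy the $2^{-j/4}$ gain and thereby the summability of the superposition $\om_2=\sum_{j\geq 1}\om_{2,j}$.
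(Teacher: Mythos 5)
Your architecture coincides with the paper's: you take the $x$-Fourier transform, superpose over the $\om_L$-frequency $l$ so that each mode $f_{k,l}$ solves the free transport equation \eqref{equ:fky} with source $F_{k,l}=ilth^2_{\nu,t}v^{(2)}_{e,k-l}w^L_l$, apply Lemma \ref{lemma: om decom} with the shift parameter equal to the superposition index $l$ (which is exactly what cancels the phase $e^{-ilty}$ carried by $w^L_l$), match the echo factor $|k-l|^{-\frac12}\lan k-l\ran^{-\frac12}$ against the two regimes of $\mathcal{M}_1$, and close with Minkowski/Young in $(k,l)$. This is the paper's proof, and your identification of $\theta_1<\frac14$ as the binding constraint is also the paper's.

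The one place where your write-up does not actually close is the final numerology, which is the whole content of the lemma. The growth you must absorb on $(T_{j-1},T_j]$ is $2^{(1+2\theta_1)j}$ from $\|th^2_{\nu,t}\|_{L^\infty}$ times $2^{(\frac32-\theta_1)j}$ from \eqref{esti: pay+kwL2L2} (note that this factor is the \emph{weighted} growth, so your additional credit of $2^{-j\theta_1}$ for the pointwise weight $h^{-1}_{\nu,t}$ double-counts; the correct total is $2^{(\frac52+\theta_1)j}$, not $2^{\frac52 j}$). Against this you need at least $2^{-(\frac{11}{4}+\theta_1)j}$ from the Gaussian $e^{-\frac13\nu l^2t^3}$. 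Your proposed route --- Cauchy--Schwarz in $l$ pairing $|\widehat{\om^{in}}(l,\cdot)|\in L^2_l$ with the Gaussian moment --- yields only $(\nu t^3)^{-\frac34}\sim 2^{-\frac94 j}$, and $\frac94<\frac{11}{4}+\theta_1$, so the estimate would come out as $2^{+(\frac12+\theta_1)j}$ rather than $2^{-j/4}$ (your displayed decay rate $2^{-3j(m+1)/2}$ is also inconsistent with the moment bound you quote). To recover the lemma you must use the $L^\infty_l$ control of the data at low frequencies (this is precisely where the $L^1_xH^3_y$ assumption enters), e.g.\ by trading $e^{-2^j|l|^{2/3}}\lesssim 2^{-(\frac{11}{4}+\theta_1)j}|l|^{-\frac{11}{6}-\frac23\theta_1}$ as in \eqref{Fk,lk-l<1} and integrating $|l|^{-\frac56-\frac23\theta_1}A_3$ near $l=0$ against $\|A_3\|_{L^\infty_l}$ --- integrable exactly when $\theta_1<\frac14$ --- while using $\|A_3\|_{L^2_l}$ for $|l|\geq 1$. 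With that substitution your argument is the paper's.
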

\begin{proof}
Let $f_1(t,k,y)=\int_{\R} e^{-ikx} f_1(t,x,y)dx$.
In terms of Parseval's identity and Minkowski's  inequality, we obtain
\begin{align*}
\|h^{-1}_{\nu,t}f_1\|_{L^\infty_t L^2_{x,y} }=&\|h^{-1}_{\nu,t}f_1(t,k,y)\|_{L^\infty_t L^2_{k,y} }\leq \|f_1(t,k,y)\|_{L^2_k L^\infty_t L^2_y },\\
\|h^{-1}_{\nu,t}\mathcal{M}_1\na \De^{-1}f_1\|_{L^2_t L^2_{x,y} }\leq &\|(|k|^{\frac12}+ |k|)(\pa_y,k)(\pa^2_y-k^2)^{-1}f_1(t,k,y)\|_{L^2_k L^2_t L^2_y},
\end{align*}
which implies
\begin{align*}
\|h^{-1}_{\nu,t}f_1\|^2_{Y_j}\lesssim& \|f_1(t,k,y)\|^2_{L^2_k Y_{h,j}},
\end{align*}
where \begin{align*}
\|f_1(t,k,y)\|_{Y_{h,j}}=&\|f_1(t,k,y)\|_{L^\infty_t(T_{j-1},T_j; L^2_y)}+\nu^{\frac12}\|h^{-1}_{\nu,t}(\pa_y, k)f_1(t,k,y)\|_{L^2_t(T_{j-1},T_j; L^2_y)}\\
&+\|(|k|^{\frac12}+|k|)(\pa_y, k)(\pa^2_y-k^2)^{-1} f_1(t,k,y)\|_{L^2_t(T_{j-1},T_j; L^2_y)}.
\end{align*}
Let $B_{k}=\| (|k|^{\frac12}+|k|)(\pa_y, k)(\pa^2_y-k^2)^{-1}w_{e,k}\|_{L^2_t L^2_y}$.
To conclude the proof, it suffices to show
\begin{align*}
\|f_1(t,k,y)\|_{L^2_k Y_{h,j}}\leq C2^{-\frac{j}{4}}\nu^{-\frac13}\|B_{k}\|_{L^2_k}E_0.
\end{align*}
Recall $v^{(2)}_{e,k}(t,k,y)=\int_{\R} u^{(2)}_e e^{-ikx}dx$. Then, we have
\begin{align*}
[u^{(2)}_e\pa_x\om_{L}]^{\wedge}(t,k,y)=\int_{\R} ilv^{(2)}_{e,k-l} w^{L}_ldl.
\end{align*}
Let $f_{k,l}$ solve
\begin{align}\label{equ: fkl}
(\pa_t+iky)f_{k,l}(t,y)=ilth^2_{\nu,t} v^{(2)}_{e,k-l}  w^{L}_{l}=F_{k,l}(t,y),\quad f_{k,l}(T_0,y)=0.
\end{align}
It follows from \eqref{equ: f1} that
\begin{align*}
f_1(t,k,y)=\int_{\R}f_{k,l}(t,y)dl.
\end{align*}
As $\om_{L}|_{y=\pm 1}=0$, we have $w^L_{l}|_{y=\pm 1}=0$ and $F_{k,l}|_{y=\pm1}=0$.
Using Lemma \ref{lemma: om decom} for $s=\theta_1$, we get that
\begin{equation}\label{esti: fklYk}
\|f_{k,l}\|_{Y_{h,j}}
\leq 2^{(\frac32-\theta_1)j} (1+|k-l|+|l|)|k-l|^{-\frac12}\lan k-l\ran^{-\frac12}\|(\pa_y, \lan k-l\ran)(e^{il yt}F_{k,l})\|_{L^2_tL^2_y}.
\end{equation}
For $t\in(T_{j-1},T_j]$, we have
$$h^2_{\nu,t}t\leq \nu^{-\frac13}2^{(2\theta_1+1)j}.$$
Applying Lemma \ref{lemma:wL estimate}, we deduce
\begin{equation}\label{esti: wLl}
h^2_{\nu,t}t\|(\pa_y,1)(e^{il yt}l w^L_l)\|_{L^\infty_y}\lesssim \nu^{-\frac13}2^{(2\theta_1+1)j} \min\{|l|,|l|^{\frac12}\}A_2 e^{-2^{j}|l|^{2/3}}.
\end{equation}
In view of $F_{k,l}=ilth^2_{\nu,t}v^{(2)}_{e,k-l} w^{L}_l$, for $|k-l|\geq 1$, we have
\begin{equation}\label{pa Fk,l}
\begin{aligned}
\|(\pa_y,  k-l)(e^{il yt}F_{k,l})\|_{L^2_y}
\lesssim h^2_{\nu,t}t\|(\pa_y, k-l)v^{(2)}_{e,k-l}\|_{L^2_y}\|(\pa_y,1) e^{il yt}l w^L_l\|_{L^\infty_y}.
\end{aligned}
\end{equation}
Inserting \eqref{esti: wLl} into \eqref{pa Fk,l} and noticing that $\min\{|l|,|l|^{\frac12}\}A_2\leq \min\{|l|,|l|^{-\frac12}\}A_3$, we obtain
\begin{equation}\label{Fk,lk-l<1}
\begin{aligned}
&2^{(\frac32-\theta_1)j}\|(\pa_y, k-l)(e^{il yt} F_{k,l})\|_{L^2_tL^2_y}\\
\lesssim &2^{-\frac{j}{4}}\nu^{-\frac13}\|(\pa_y, k-l)v^{(2)}_{e,k-l}\|_{L^2_tL^2_y} |l|^{-\frac{11}{6}-\frac23\theta_1}\min\{|l|,|l|^{-\frac12}\}A_3\\
\lesssim &2^{-\frac{j}{4}}\nu^{-\frac13} B_{k-l}\min\{|l|^{-\frac56-\frac23\theta_1},|l|^{-\frac{7}{3}-\frac23\theta_1}\}A_3.
\end{aligned}
\end{equation}

For $|k-l|\leq 1$, due to
$$\|(\pa_y, 1)v^{(2)}_{e,k-l}\|_{L^2_y}=\|(k-l)(\pa_y, 1)(\pa^2_y-(k-l)^2)^{-1} w_{e,k}\|_{L^2_y}\leq |k-l|^{\frac12}B_{k-l}, $$
together with \eqref{esti: wLl}, we get
\begin{align}\label{Fk,lk-l>1}
2^{(\frac32-\theta_1)j}\|(\pa_y, 1)(e^{il yt} F_{k,l})\|_{L^2_tL^2_y}\lesssim &2^{-\frac{j}{4}}\nu^{-\frac13}|k-l|^{\frac12} B_{k-l}\min\{|l|^{-\frac56-\frac23\theta_1},|l|^{-\frac{7}{3}-\frac23\theta_1}\}A_3.
\end{align}
On the other hand, there holds
\begin{equation}\label{esti: kl}
\begin{aligned}
&(|k-l|+|l|)|k-l|^{-\frac12}\lan k-l\ran^{-\frac12}\lesssim 1+|l||k-l|^{-1},\, |k-l|\geq 1,\\
&(1+| k-l|+|l|)\lan k-l\ran^{-\frac12}\lesssim \lan l\ran,\quad |k-l|\leq 1.
\end{aligned}
\end{equation}
Inserting \eqref{Fk,lk-l<1}, \eqref{Fk,lk-l>1} and \eqref{esti: kl} into \eqref{esti: fklYk}, we deduce
\begin{align*}
\|f_1\|_{Y_{h,j}}\lesssim
&2^{-\frac{j}{4}}\int_{\R}\nu^{-\frac13}(|l|^{-\frac{4}{3}}A_3\widetilde{\chi}_{|l|\geq 1}+|l|^{-\frac56-\frac23\theta_1}A_3\widetilde{\chi}_{|l|\leq 1}) B_{k-l}dl.
\end{align*}
Integrating with $k$, we arrive at
\begin{equation}\label{f1L2Y}
\begin{aligned}
\|f_1\|_{L^2_kY_{h,j}}\lesssim & 2^{-\frac{j}{4}}\|B_{k}\|_{L^2_k}\Big(\int_{|l|\leq 1} \nu^{-\frac13}|l|^{-\frac56-\frac23\theta_1}A_3dl+\int_{|l|\geq 1} \nu^{-\frac13}|l|^{-\frac43}A_3dl\Big)\\
\lesssim &2^{-\frac{j}{4}}\nu^{-\frac13}\|B_{k}\|_{L^2_k}(\|A_3\|_{L^\infty_l}+\|A_3\|_{L^2_l})\lesssim 2^{-\frac{j}{4}}\nu^{-\frac13}\|B_{k}\|_{L^2_k}E_0,
\end{aligned}
\end{equation}
where  we used that $\theta_1\in (0, \frac14)$.
This completes the proof of Lemma \ref{lemma: f1Y+paxf1Y}.
\end{proof}
Now, we prove Proposition \ref{pro: wre}.
\begin{proof}
For \eqref{main result}, according to the decomposition of $\om_2=\sum_{j}\om_{2,j}$, we have
\begin{equation}
\begin{aligned}\label{om2XT0Tj0}
\|\om_2\|_{X(T_0,T_{j_0})}=&\Big\|\sum_{1\leq j\leq j_0} \om_{2,j}\Big\|_{X(T_0,T_{j_0})}\leq \sum_{1\leq j\leq j_0 }\|\om_{2,j}\|_{X(T_{j-1},T_{j_0})}\\
= &\sum_{1\leq j\leq j_0 }\|\om_{2,j}\|_{X(T_{j-1},T_j)}+\sum_{1\leq j\leq j_0-1}\|\om_{2,j}\|_{X(T_{j},T_{j_0})}.
\end{aligned}
\end{equation}
It follows from \eqref{om2j small} that
\begin{align}\label{om2jtildeXj}
\sum_{1\leq j\leq j_0 }\|\om_{2,j}\|_{X_j}\lesssim \nu^{-\frac13}\sum_{1\leq j\leq j_0}2^{-\frac{j}{4}}\|\mathcal{M}_1 \na\De^{-1}\om_e\|_{L^2(T_{j-1},T_j;L^2)} E_0\lesssim \nu^{-\frac13}\|\om_e\|_{X(T_0,T_{j_0})}E_0.
\end{align}
For $t>T_j$, the equation \eqref{equ: om2j} can be written as
\begin{equation}\label{equ: om2j free}
\left\{
\begin{aligned}
&\pa_t \om_{2,j}-\nu\De\om_{2,j}+y\pa_x\om_{2,j}=0, \quad t>T_{j}, \\
& \om_{2,j}(t,x,\pm1)=0, \quad \om_{2,j}|_{t=T_{j}}=\om_{2,j}(T_j).
\end{aligned}
\right.
\end{equation}
We use \eqref{esti: ftildeX} to obtain
\begin{align}\label{om2j Tj-Tj0}
\|\om_{2,j}\|_{X(T_{j},T_{j_0})}\leq C\|\om_{2,j}(T_j)\|_{L^2}\leq C\|\om_{2,j}\|_{X_j},
\end{align}
which together with \eqref{om2j small} yields that
\begin{align}\label{om2j tildeXj+1}
\|\om_{2,j}\|_{X(T_{j},T_{j_0})}\leq C2^{-\frac{j}{4}}\nu^{-\frac13}\|\mathcal{M}_1 \na\De^{-1}\om_e\|_{L^2(T_{j-1},T_j;L^2)} E_0.
\end{align}
Inserting \eqref{om2jtildeXj} and \eqref{om2j tildeXj+1} into \eqref{om2XT0Tj0}, we deduce
\begin{align}
\|\om_2\|_{X(T_0,T_{j_0})}\leq C\nu^{-\frac13}\|\om_e\|_{X(T_0,T_{j_0})}E_0.
\end{align}

Next, we go into the proof of \eqref{om2 Xtj}. Let
$$\om_2=\sum_{1\leq j\leq j_0}\om_{2,j}+\sum_{j\geq j_0+1}\om_{2,j}=:\widetilde{\om}_{2}+\sum_{j\geq j_0+1}\om_{2,j}.$$
For $\widetilde{\om}_{2}$, there holds
\begin{equation*}
\left\{
\begin{aligned}
&\pa_t \widetilde{\om}_{2}-\nu\De\widetilde{\om}_{2}+y\pa_x\widetilde{\om}_{2}=0, \quad t>T_{j_0}, \\
& \widetilde{\om}_{2}(t,x,\pm1)=0, \quad \widetilde{\om}_{2}|_{t=T_{j_0}}=\om_{2}(T_{j_0}).
\end{aligned}
\right.
\end{equation*}
Thanks to \eqref{fX large time}, we obtain
\begin{align}\label{om2,j small}
\|\widetilde{\om}_{2}\|_{X_{\theta}(T_{j_0},T)}\leq &C\|(1+\nu^{\frac13}T_{j_0} \mathcal{M})^{\theta} \widetilde{\om}_{2}(T_{j_0})\|_{L^2}=C\|(1+\nu^{\frac13}T_{j_0} \mathcal{M})^{\theta} \om_{2}(T_{j_0})\|_{L^2}.
\end{align}
For $j\geq j_0+1$, in a similar way to \eqref{om2jtildeXj}--\eqref{om2j tildeXj+1}, we use \eqref{om2j large} to obtain
\begin{align*}
\|\om_{2,j}\|_{X_{\theta}(T_{j-1},T)}\leq &\|\om_{2,j}\|_{X_{\theta}(T_{j-1},T_j)}+\|\om_{2,j}\|_{X_{\theta}(T_j,T)}\\
\leq &C2^{-\frac{j}{4}}\nu^{-\frac13}\|(1+\nu^{\frac13}t\mathcal{M})^{\theta}\mathcal{M}_1 \na\De^{-1}\om_e\|_{L^2(T_{j-1},T_j;L^2)}E_0,
\end{align*}
which implies
\begin{align}\label{om2,j large}
\sum_{j\geq j_0+1}\|\om_{2,j}\|_{X_{\theta}(T_{j},T)}\leq C\nu^{-\frac13}\|(1+\nu^{\frac13}t\mathcal{M})^{\theta}\mathcal{M}_1 \na\De^{-1}\om_e\|_{L^2(T_{j_0},T;L^2)} E_0.
\end{align}

Combining \eqref{om2,j large} and \eqref{om2,j small}, we directly obtain
\begin{align*}
\|\om_2\|_{X_{\theta}(T_{j_0},T)}\leq &\|\widetilde{\om}_{2}\|_{X_{\theta}(T_{j_0},T)}+\sum_{j\geq j_0+1}\|\om_{2,j}\|_{X_{\theta}(T_{j-1},T)}\\
\leq &C\nu^{-\frac13}\|\om_e\|_{X_{\theta}(T_{j_0}, T)}E_0+C\|(1+\nu^{\frac13}T_{j_0} \mathcal{M})^{\theta} \om_{2}(T_{j_0})\|_{L^2},
\end{align*}
which completes the proof of Proposition \ref{pro: wre}.
\end{proof}
 Based on these estimates, we prove Proposition \ref{ome estimate} via a bootstrap argument.
\begin{proof}
Thanks to \eqref{esti om1 tildeX} and \eqref{main result}, we obtain
\begin{align*}
\|\om_{e}\|_{X(T_0, T_{j_0})}\leq& \|\om_1\|_{X(T_0, T_{j_0})}+\|\om_2\|_{X(T_0, T_{j_0})}\\
\leq& C(\nu^{-\frac13}E_0\|\om_e\|_{X(T_0,T_{j_0})}+\nu^{-\frac58}\|\om_e\|^2_{X(T_0,T_{j_0})}+\nu^{\frac13-\epsilon}E_0),
\end{align*}
which implies
\begin{align*}
\|\om_{e}\|_{X(T_0, T_{j_0})}\leq C\nu^{-\frac23+\epsilon} \|\om_e\|^2_{X(T_0,T_{j_0})}+C\nu^{\frac13-\epsilon}E_0.
\end{align*}
Using the condition $E_0\leq c\nu^{\frac13}$, we deduce
\begin{align*}
\|\om_{e}\|_{X(T_0, T_{j_0})}\leq C\nu^{\frac13-\epsilon}E_0.
\end{align*}
Inserting it into \eqref{esti om1 tildeX} and \eqref{main result}, we also have
\begin{align*}
\|\om_{1}\|_{X(T_0, T_{j_0})}\leq C\nu^{\frac13-\epsilon}E_0, \quad  \|\om_{2}\|_{X(T_0, T_{j_0})}\leq C\nu^{\frac13-\epsilon}E_0.
\end{align*}
Choosing $0< \epsilon<(\frac{1}{24}-\frac23\theta)(1+\theta)^{-1}$ and $c_1> 8 C\nu^{\frac{1}{24}-\frac23\theta-\epsilon(1+\theta)}$, we obtain
\begin{equation}\label{ome small}
\begin{aligned}
&\|(1+ \nu^{\frac13}T_{j_0} \mathcal{M})^{\theta}\om_{1}(T_{j_0})\|_{L^2}\leq  \|(1+\nu^{\frac13}t\mathcal{M})^{\theta}\om_{1} \|_{X(T_0,T_{j_0})}\\
\leq & \|(1+\nu^{\frac13}t)^{\theta}\|_{L^\infty(T_0,T_{j_0})}\|\om_{1}\|_{X(T_0, T_{j_0})}
\leq  C \nu^{\frac13-\frac23\theta-\epsilon(1+\theta)}E_0\leq \frac18 c_1\nu^{\frac{7}{24}}E_0,
\end{aligned}
\end{equation}
and
\begin{equation}\label{om2 small}
\begin{aligned}
&\|(1+ \nu^{\frac13}T_{j_0} \mathcal{M})^{\theta}\om_{2}(T_{j_0})\|_{L^2}\leq  \|(1+\nu^{\frac13}t\mathcal{M})^{\theta}\om_{2} \|_{X(T_0,T_{j_0})}\\
\leq & \|(1+\nu^{\frac13}t)^{\theta}\|_{L^\infty(T_0,T_{j_0})}\|\om_{2}\|_{X(T_0, T_{j_0})}
\leq  C \nu^{\frac13-\frac23\theta-\epsilon(1+\theta)}E_0\leq \frac18 c_1\nu^{\frac{7}{24}}E_0.
\end{aligned}
\end{equation}
Assume that $T_{j_0}<T<\infty$ is the maximal time such that
\begin{align}\label{claim}
\|\om_e\|_{X_{\theta}(T_{j_0},T)}\leq c_1 \nu^{\frac{7}{24}} E_0.
\end{align}
Due to \eqref{esti om1 X large}, \eqref{om2 Xtj}, \eqref{ome small}, \eqref{om2 small} and the choice of $\epsilon$, we deduce that
\begin{equation}\label{omeX small}
\begin{aligned}
\|\om_e\|_{X_{\theta}(T_{j_0},T)}\leq& \|\om_{1}\|_{X_{\theta}(T_{j_0},T)}+\|\om_{2}\|_{X_{\theta}(T_{j_0},T)}\\
\leq &C\nu^{-\frac13}E_0\|\om_e\|_{X_{\theta}(T_{j_0},T)}+\nu^{-\frac58}\|\om_e\|^2_{X_{\theta}(T_{j_0},T)}+\nu^{\frac13}E_0+\frac14c_1 \nu^{\frac{7}{24}}E_0\\
\leq &C\nu^{-\frac58}\|\om_e\|^2_{X_{\theta}(T_{j_0},T)}+\frac13c_1 \nu^{\frac{7}{24}}E_0.
\end{aligned}
\end{equation}
Choosing $Ccc_1\leq \frac{1}{2}$, we then use \eqref{claim} to obtain
\begin{align*}
\|\om_{e}\|_{X_{\theta}(T_{j_0},T)}\leq Ccc_1 \|\om_{e}\|_{X_T}+\frac13c_1 \nu^{\frac{7}{24}}E_0
 \leq  \frac12\|\om_{e}\|_{X_{\theta}(T_{j_0},T)}+\frac13c_1 \nu^{\frac{7}{24}}E_0,
\end{align*}
which implies
\begin{align}\label{ome X}
\|\om_{e}\|_{X_{\theta}(T_{j_0},T)}\leq \frac23c_1 \nu^{\frac{7}{24}}E_0,
\end{align}
This is contradictory to the definition of $T$. Hence, we obtain $T=\infty$ and complete the proof of Proposition \ref{ome estimate}.
\end{proof}
\subsection{The proof of Theorems \ref{Th1}}\label{sec:proof th1}
According to the analysis in Section \ref{sec:Introduction}, we decompose the equation of $\om$ into $\om_{L}+\om_{e}$, whose estimates have already been established in Subsections \ref{sec:linear esti} and \ref{sec: error}, respectively.

Now, we present the proof of Theorem \ref{Th1}.
As for \eqref{Th: esti om}, it is clear that
\begin{align*}
\|(1+ \nu^{\frac13}t\mathcal{M})^{\theta}\om\|_{ L^2}\leq \|(1+ \nu^{\frac13}t\mathcal{M})^{\theta}\om_{L}\|_{ L^2}+\|(1+ \nu^{\frac13}t\mathcal{M})^{\theta}\om_{e}\|_{L^2}.
\end{align*}
Thanks to \eqref{paxpayomL L2} with $s=\theta$, we get
\begin{align*}
\|(1+ \nu^{\frac13}t\mathcal{M})^{\theta}\om_{L}\|_{ L^2}\leq CE_0.
\end{align*}
By Proposition \ref{Prop: esti ome}, we obtain
\begin{align*}
\|(1+ \nu^{\frac13}t\mathcal{M})^{\theta}\om_e\|_{ L^2}\leq CE_{0}.
\end{align*}
Combining the estimates of $\om_{L}$ and $\om_{e}$, we arrive at
\begin{align*}
\|(1+ \nu^{\frac13}t\mathcal{M})^{\theta}\om\|_{ L^2}\leq CE_0.
\end{align*}

For the inviscid damping estimate \eqref{inviscid}, we first divide $u=u_{L}+u_{e}$ as \eqref{equ: ome} to obtain
\begin{equation}
\begin{aligned}\label{esti u}
&\|(1+ \nu^{\frac13}t\mathcal{M})^{\theta}\mathcal{M}_1u\|_{L^2L^2}\\
\leq &\|(1+ \nu^{\frac13}   t\mathcal{M})^{\theta}\mathcal{M}_1u_{L}\|_{L^2L^2}+\|(1+ \nu^{\frac13} t\mathcal{M})^{\theta}\mathcal{M}_1u_{e}\|_{L^2L^2}.
\end{aligned}
\end{equation}
Using \eqref{k>1, psiL2}, \eqref{xi>1 xi4pay+xitpsiL2}, \eqref{esti4 psiL2}, \eqref{xi<1 pay+xitpsiL2}, \eqref{kneq0 wL2} and $\mathcal{M}(k)\leq 3|k|^{\frac23}$, we deduce
\begin{equation*}
\begin{aligned}
&\|(1+ \nu^{\frac13}t\mathcal{M})^{\theta}\mathcal{M}_1u_{L}\|^2_{L^2L^2}\\
 \leq &\int^{\infty}_0\int_{|k|\geq 1}(1+ \nu^{\frac13} t)^{2\theta}|k|^2\big((|k|+|k|t)\|\psi_{L}\|_{L^2_y}+\|(\pa_y+ikt)\psi_{L}\|_{L^2_y}\big)^2 dkdt\\
 &+3\int^{\infty}_0\int_{|k|\leq 1}(1+ \nu^{\frac13} |k|^{\frac23} t)^{2\theta}|k|((|k|+|k|t)\|\psi_{L}\|_{L^2_y}+\|(\pa_y+ikt)\psi_{L}\|_{L^2_y})^2 dkdt\\
\lesssim &\int^{\infty}_0 (1+t)^{-2} \int_{|k|\geq 1}  |A_3|^2 dkdt+\int^{\infty}_0  \int_{|k|\leq 1}\frac{|k|}{(1+|kt|)^2} |A_3|^2 dkdt.
\end{aligned}
\end{equation*}
Due to
\begin{align*}
\int_{|k|\leq 1}\frac{|k|}{(1+|kt|)^2}dk\leq (1+t)^{-1}\int_{|k|\leq 1}\frac{1}{1+|kt|}dk\leq C(1+t)^{-2}\ln(1+t),
\end{align*}
we obtain
\begin{equation}\label{uLneq}
\begin{aligned}
\|(1+\nu^{\frac13}t\mathcal{M})^{\theta}\mathcal{M}_1u_{L}\|^2_{L^2L^2}
\lesssim  \big(\|A_3\|^2_{L^2_k}+\|A_3\|^2_{L^\infty_k}\big)\int^{\infty}_0 (1+t)^{-\frac32}dt \lesssim E^2_0.
\end{aligned}
\end{equation}
Using Propositions \ref{small time} and \ref{ome estimate}, we have
\begin{equation}\label{ueneq}
\begin{aligned}
&\|(1+ \nu^{\frac13}t\mathcal{M})^{\theta}\mathcal{M}_1u_{e}\|^2_{L^2L^2}\\
=&\|(1+ \nu^{\frac13}t\mathcal{M})^{\theta}\mathcal{M}_1u_{e}\|^2_{L^2_t(0,\nu^{-\frac16})L^2_{x,y}}+\|(1+ \nu^{\frac13}t\mathcal{M})^{\theta}\mathcal{M}_1u_{e}\|^2_{L^2_t(\nu^{-\frac16},\infty)L^2_{x,y}}\\
\leq & \nu^{-\frac{1}{6}}\|(1+ \nu^{\frac13}t\mathcal{M})^{\theta}\om_e\|^2_{L^\infty_t(0,\nu^{-\frac16}) L^2_{x,y}}+ CE^2_0\lesssim E^2_0.
\end{aligned}
\end{equation}
Inserting \eqref{uLneq} and \eqref{ueneq} into \eqref{esti u}, we arrive at
\begin{align*}
\|(1+ \nu^{\frac13}t\mathcal{M})^{\theta}\mathcal{M}_1u\|_{L^2L^2}\leq C E_0.
\end{align*}

\appendix
\section{some basic estimates}
\begin{lemma}\label{operator estimate}
The operator $J_k$ is defined as
\begin{align*}
\mathfrak{J}_k[f](y)=| k|p.v. \frac{ k}{| k|}\int^1_{-1}\frac{1}{2i(y-y')}G_k(y,y')f(y')dy', \quad k\neq 0,
\end{align*}
with
\begin{equation*}
G_{k}(y,y')=-\frac{1}{ k\sinh 2 k}
\left\{
\begin{aligned}
&\sinh ( k(1-y'))\sinh( k(1+y)), \quad y\leq y',\\
&\sinh( k(1-y))\sinh( k(1+y')), \quad y\geq y'.
\end{aligned}
\right.
\end{equation*}
For $k\in \R\setminus \{0\}$, $\mathfrak{J}_k$ is bounded in $L^2$. That is, for $f\in L^2$, we have
\begin{align*}
\|\mathfrak{J}_k[f]\|_{L^2}\leq C\min\{1,|k|\}\|f\|_{L^2}.
\end{align*}
\end{lemma}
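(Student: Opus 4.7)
The plan is to split the integration kernel into a singular Hilbert-transform piece and a regular remainder. Writing
\begin{align*}
\frac{G_k(y,y')}{y-y'} \;=\; \frac{G_k(y,y)}{y-y'} \;+\; K(y,y'), \qquad K(y,y') \;:=\; \frac{G_k(y,y')-G_k(y,y)}{y-y'},
\end{align*}
one obtains a decomposition $\mathfrak{J}_k = \mathfrak{J}_k^{(1)} + \mathfrak{J}_k^{(2)}$, where $\mathfrak{J}_k^{(1)}[f](y) = \tfrac{k}{2i}G_k(y,y)\,\mathcal{H}[f](y)$ with $\mathcal{H}$ the truncated Hilbert transform on $[-1,1]$, and $\mathfrak{J}_k^{(2)}$ has the regular kernel $\tfrac{k}{2i}K(y,y')$. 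The task reduces to bounding each piece by $C\min\{|k|,1\}\|f\|_{L^2}$.

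\textbf{The diagonal piece.} The product-to-sum identity rewrites $G_k(y,y) = -\frac{\cosh 2k - \cosh 2ky}{2k\sinh 2k}$. For $|k|\leq 1$, Taylor expansion gives $G_k(y,y)\to -\tfrac12(1-y^2)$ as $k\to 0$, so $|G_k(y,y)|\leq C$; for $|k|\geq 1$, the bound $\cosh 2k \lesssim \sinh 2k$ yields $|G_k(y,y)|\leq C/|k|$. Hence $|G_k(y,y)|\leq C\min\{1,1/|k|\}$, and the classical $L^2$-boundedness of $\mathcal{H}$ on $[-1,1]$ produces $\|\mathfrak{J}_k^{(1)}[f]\|_{L^2}\leq C\min\{|k|,1\}\|f\|_{L^2}$.

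\textbf{The regular piece.} Using $G_k(y,y')-G_k(y,y) = \int_y^{y'}\pa_{y''}G_k(y,y'')\,dy''$ together with the explicit formula for $\pa_{y''}G_k$, one obtains $|\pa_{y''}G_k(y,y'')|\lesssim 1$ uniformly for $|k|\leq 1$ (the $|k|$ in numerator cancels the $\sinh 2k \gtrsim |k|$ in denominator), and $|\pa_{y''}G_k(y,y'')|\lesssim e^{-|k||y-y''|}$ for $|k|\geq 1$ (traced back to $\sinh(k(1+y))/\sinh 2k\lesssim e^{-|k|(1-y)}$). These yield $|K(y,y')|\leq C$ when $|k|\leq 1$ and $|K(y,y')|\leq C\min\{1,1/(|k||y-y'|)\}$ when $|k|\geq 1$. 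In the first regime, Schur's test gives $\|\mathfrak{J}_k^{(2)}\|_{L^2\to L^2}\leq C|k|$ immediately.

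\textbf{Main obstacle.} In the $|k|\geq 1$ regime, a naive Schur test only gives $\int_{-1}^1|K(y,y')|\,dy'\lesssim \log|k|/|k|$, producing a spurious $\log|k|$ loss in $\|\mathfrak{J}_k^{(2)}\|$. The cleanest remedy is to treat $\tfrac{k}{2i}G_k(y,y')/(y-y')$ as a Calder\'on--Zygmund kernel with size and regularity constants $\lesssim \min\{|k|,1\}$ and apply the $T1$ theorem. The nontrivial check is that $T[1]$ and $T^*[1]$ lie in $L^\infty$: writing
\begin{align*}
T[1](y) \;=\; \tfrac{k}{2i}G_k(y,y)\,\mathrm{p.v.}\!\int_{-1}^1\!\!\frac{dy'}{y-y'} + \tfrac{k}{2i}\!\int_{-1}^1\!\! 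K(y,y')\,dy' \;=\; \tfrac{k}{2i}G_k(y,y)\ln\!\Big|\tfrac{1+y}{1-y}\Big| + O(\min\{|k|,1\}),
\end{align*}
the vanishing of $G_k(y,y)$ at $y=\pm 1$ (namely $\sinh(k(1\mp y))\to 0$) exactly cancels the logarithmic divergence of the principal value, yielding $\|T[1]\|_{L^\infty}\leq C\min\{|k|,1\}$. This is essentially the argument of Lemma 7.2 in \cite{Arbon-Bedrossian}; alternatively, one may extend $f$ by zero to $\R$, observe that the whole-line analogue has Fourier multiplier $-\tfrac{1}{2}\mathrm{sgn}(k)\arctan(\xi/|k|)$ (uniformly bounded by $\pi/4$), and absorb the image-method boundary correction $G_k-G_k^{\R}$ (of size $O(e^{-|k|}/|k|)$) into a harmless regular perturbation.
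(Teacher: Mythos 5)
Your treatment of the regime $|k|\leq 1$ is exactly the paper's argument: split off the diagonal value $G_k(y,y)$ to produce a truncated Hilbert transform with bounded coefficient, and control the remainder kernel $K(y,y')=(G_k(y,y')-G_k(y,y))/(y-y')$ by $\sup|\partial_{y''}G_k|\lesssim 1$ and Schur's test, which after the prefactor $k$ gives the factor $|k|$. Where you diverge is the regime $|k|\geq 1$: the paper simply invokes \cite[Lemma 7.1]{Arbon-Bedrossian} for the uniform bound $\|\mathfrak{J}_k[f]\|_{L^2}\leq C\|f\|_{L^2}$ there, whereas you attempt a self-contained proof. Correctly diagnosing that a naive Schur test on $K$ loses a factor $\log|k|$ is a genuine insight, and the Calder\'on--Zygmund/$T1$ strategy (or the whole-line Fourier multiplier $\arctan(\xi/|k|)$ plus boundary corrections) is a legitimate route.

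However, your large-$k$ sketch has two concrete soft spots. First, in verifying $T[1]\in L^\infty$ you reuse the very decomposition whose remainder you just showed only satisfies $\int_{-1}^1|K(y,y')|\,dy'\lesssim \log|k|/|k|$; multiplied by the prefactor $|k|$ this gives $O(\log|k|)$, not the claimed $O(\min\{|k|,1\})$, so the stated bound on $T[1]$ does not follow from what you wrote. One must instead exploit cancellation in $\mathrm{p.v.}\int G_k(y,y')/(y-y')\,dy'$ directly (the exponential decay $|G_k|\lesssim e^{-|k||y-y'|}/|k|$ makes the off-diagonal contribution $O(1/|k|)$ per unit length over an interval of length $O(1/|k|)$, and the vanishing of $G_k$ as $y\to\pm1$ kills the endpoint logarithm). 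Second, the claim that the image-method correction $G_k-G_k^{\R}$ is of size $O(e^{-|k|}/|k|)$ is false near the corners $y=y'=\pm1$: writing $G_k=-\big(\cosh(k(2-|y-y'|))-\cosh(k(y+y'))\big)/(2k\sinh 2k)$ one finds image terms of the form $e^{-|k|((1\mp y)+(1\mp y'))}/(2|k|)$, which are only $O(1/|k|)$ there. The resulting correction operators are still bounded, but because they have the product structure $a(y)\,\mathcal{H}[a f](y)$ with $a(y)=e^{-|k|(1\mp y)}$, not because they are a small regular perturbation amenable to Schur. Neither issue is fatal to the strategy, but as written the large-$|k|$ case is not closed; the paper avoids all of this by citing the reference.
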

\begin{proof}
For $|k|\geq 1$, it follows from \cite[Lemma 7.1]{Arbon-Bedrossian} that
\begin{align}\label{JkfL2}
\|\mathfrak{J}_k[f]\|_{L^2}\leq C\|f\|_{L^2}.
\end{align}
For $k\leq1$, in a similar way to \cite{Arbon-Bedrossian,Bedrossian-He-Iyer-Wang}, noticing that
\begin{align*}
\|G_k(y,y)\|_{L^\infty}\leq C,\quad \sup_{y,y'}\frac{|G_k(y,y')-G_k(y,y)|}{|y-y'|}\leq C,
\end{align*}
we use the the boundedness  of the classical Hilbert transform in $L^2$ and Schur's test to obtain
\begin{align*}
\|J_k[f]\|_{L^2}\leq C|k|\|f\|_{L^2},\quad |k|\leq 1,
\end{align*}
which together with \eqref{JkfL2} completes the proof of Lemma \ref{operator estimate}.
\end{proof}
\begin{lemma}\label{lemma: fg esti}
Let $\mathcal{M}$ be the multiplier defined as \eqref{oper M}. For $s\geq 0$, there holds
\begin{align}\label{esti: fg}
\|(1+\nu^{\frac13}t\mathcal{M})^{s}(fg)\|_{L^2_{x,y}}\leq C \|(1+\nu^{\frac13}t\mathcal{M})^{s}f\|_{L^2_{x,y}} \|(1+\nu^{\frac13}t\mathcal{M})^{s}g\|_{L^\infty_{x,y}}.
\end{align}
\end{lemma}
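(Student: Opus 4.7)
\end{lemma}

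\noindent\textbf{Proof plan.} The inequality is a Moser-type bilinear estimate for the bounded Fourier multiplier $T:=(1+\nu^{\frac13}t\mathcal{M}(D_x))^s$, which acts only in the $x$-variable. The strategy rests on the sub-additivity of the symbol $\mathcal{M}$. As a first step I would establish
\[
\mathcal{M}(k)\leq C\bigl[\mathcal{M}(k-l)+\mathcal{M}(l)\bigr]\qquad\text{for all }k,l\in\R
\]
through a short case analysis: when $|k|,|k-l|,|l|\leq 1/2$ all three values of $\mathcal{M}$ agree with the corresponding $|\cdot|^{2/3}$, and the claim reduces to the elementary $(a+b)^{2/3}\leq a^{2/3}+b^{2/3}$; in the remaining regimes $\mathcal{M}(k)\leq 1$ while at least one of $\mathcal{M}(k-l),\mathcal{M}(l)$ is bounded below by a positive absolute constant on $\{|\cdot|\geq 1/2\}$, with the transition region $\{1/2\leq|\cdot|\leq 1\}$ handled directly from the definition via $\chi$. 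Writing $\Lambda(k):=1+\nu^{\frac13}t\mathcal{M}(k)\geq 1$ and invoking $(a+b)^s\leq C_s(a^s+b^s)$ for $s\geq 0$, this upgrades to
\[
\Lambda(k)^s\leq C_s\bigl[\Lambda(k-l)^s+\Lambda(l)^s\bigr].
\]

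Next, I would freeze $y$, take the Fourier transform in $x$, and plug the sub-additive bound into the convolution representation $\widehat{fg}(k,y)=(2\pi)^{-1}\int\hat f(k-l,y)\hat g(l,y)\,dl$ to produce the pointwise bilinear estimate
\[
|\Lambda(k)^s\widehat{fg}(k,y)|\lesssim(|\widehat{Tf}|\ast_k|\hat g|)(k,y)+(|\hat f|\ast_k|\widehat{Tg}|)(k,y).
\]
To convert these Fourier-side convolutions into the asserted product norm $\|Tf\|_{L^2}\|Tg\|_{L^\infty}$, I would perform a Littlewood--Paley decomposition in $x$ and reorganise $fg$ via Bony's paraproduct. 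On each output dyadic annulus $|k|\sim 2^j$ the symbol $\Lambda^s$ is essentially the scalar $\Lambda(2^j)^s$, so the commutation $TP_j=P_jT$ together with uniform $L^\infty_x$-boundedness of $P_j$ allows the weight to be transferred from the product output onto the high-frequency factor, giving $\Lambda(2^j)^s\|P_jg\|_{L^\infty}\lesssim\|P_jTg\|_{L^\infty}\lesssim\|Tg\|_{L^\infty}$ in the low--high paraproduct, and symmetrically for the high--low piece. Almost-orthogonality of the dyadic output blocks consolidates the estimate; the residual high--high contribution is easier since both factors live at comparable scales.

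Squaring, integrating over $y\in[-1,1]$, and using the trivial estimate $\|f\|_{L^2_{x,y}}\leq\|Tf\|_{L^2_{x,y}}$ (Plancherel combined with $\Lambda\geq 1$) finishes the proof. The main technical obstacle I foresee is the passage from the Fourier-side convolution bound to a genuine $L^\infty_x$ control on $Tg$: a direct application of Young's inequality yields only the Wiener-algebra norm $\|\widehat{Tg}\|_{L^1_k}$, which in general strictly dominates $\|Tg\|_{L^\infty_x}$ because Hausdorff--Young runs in the opposite direction. The Littlewood--Paley detour is precisely what circumvents this, at the cost of carefully tracking $\Lambda^s$ across the transition region $|k|\sim 1$ where $\mathcal{M}$ switches from $|k|^{2/3}$ to $1$; fortunately each dyadic annulus is separated from the non-smooth point $k=0$, so standard H\"ormander--Mikhlin-type estimates apply on every scale and close the paraproduct argument.
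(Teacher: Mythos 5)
Your first step---the sub-additivity $\mathcal{M}(k)\leq C[\mathcal{M}(k-l)+\mathcal{M}(l)]$ via the case analysis around $|k|,|k-l|,|l|\lessgtr \frac12$, upgraded to $\Lambda(k)^s\leq C_s[\Lambda(k-l)^s+\Lambda(l)^s]$ with $\Lambda(k)=1+\nu^{\frac13}t\mathcal{M}(k)$---is exactly the paper's argument (the paper records it in the equivalent product form $\Lambda(k)^s\leq C\Lambda(k-l)^s\Lambda(l)^s$, available since $\Lambda\geq 1$). The paper then concludes by precisely the convolution step you reject: writing $T=(1+\nu^{\frac13}t\mathcal{M})^s$, it bounds $|\Lambda(k)^s\widehat{fg}(k,y)|$ by $(|\widehat{Tf}|\ast_k|\widehat{Tg}|)(k,y)$ and applies Plancherel and H\"older, which---as you correctly observe---produces the Wiener-type quantity $\|\widehat{Tg}\|_{L^1_kL^\infty_y}$ rather than $\|Tg\|_{L^\infty_{x,y}}$. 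This discrepancy is harmless downstream, because every invocation of the lemma in the paper estimates the $L^\infty$ factor through exactly that $L^1_k$ Fourier norm (the proofs of \eqref{omL Linfty}, \eqref{u1LLinfty}, \eqref{u12L22}, and the bounds on $u_e$ in Proposition \ref{lemma: g} all start from $\|Th\|_{L^\infty}\leq\int_{\R}\Lambda(k)^s\|\hat h(k,\cdot)\|_{L^\infty_y}\,dk$); so the version the paper actually proves and uses has the $L^1_k$ norm on the right, and your instinct that the literal $L^\infty$ statement does not follow from ``Plancherel and H\"older'' is sound.

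Your paraproduct detour, however, does not close the literal $L^\infty$ statement. In the piece $\sum_j S_{j-3}f\,P_jg$ in which $g$ carries the output frequency, almost-orthogonality of the output blocks gives $\|T(\sum_j S_{j-3}f\,P_jg)\|_{L^2}^2\lesssim\sum_j\|S_{j-3}f\|_{L^2}^2\,\|P_jTg\|_{L^\infty}^2$, and since $\|S_{j-3}f\|_{L^2}$ is not square-summable in $j$ (each term is comparable to $\|f\|_{L^2}$), the best you extract is $\|f\|_{L^2}$ times the Besov norm $\big(\sum_j\|P_jTg\|_{L^\infty}^2\big)^{1/2}$, which is not controlled by $\|Tg\|_{L^\infty}$. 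This is the classical endpoint obstruction in Coifman--Meyer/Kato--Ponce theory; overcoming it requires genuinely harder machinery (of Grafakos--Oh/Bourgain--Li type), not the routine Littlewood--Paley bookkeeping you describe. In addition, the complementary piece $\sum_j S_{j-3}g\,P_jf$ requires $\|g\|_{L^\infty}\lesssim\|Tg\|_{L^\infty}$, i.e.\ $L^\infty$-boundedness of the inverse multiplier $\Lambda^{-s}$ uniformly in $\nu^{\frac13}t$, which Mikhlin--H\"ormander does not supply at $p=\infty$ and which you do not justify. The efficient repair is not the paraproduct route but to prove (and use) the lemma with $\|\Lambda^s\hat g\|_{L^1_kL^\infty_y}$ in place of $\|Tg\|_{L^\infty}$: that is what the two-line convolution argument delivers, and it is the quantity the rest of the paper actually controls.
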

\begin{proof}
According to the range of $k-l$ and $l$, we control $\mathcal{M}(k)$ by
\begin{align*}
\mathcal{M}(k)\leq \left\{
\begin{aligned}
& 1,\quad |l|\geq \frac14 \text{ or } |k-l|\geq \frac14,\\
& 3(1+|k-l|^{\frac23}+|l|^{\frac23}), \quad |l|\leq \frac14 \text{ and } |k-l|\leq \frac14,
\end{aligned}
\right.
\end{align*}
which yields
\begin{align*}
(1+\nu^{\frac13}t\mathcal{M}(k) )^s\leq &(1+\nu^{\frac13}t)^s\tilde{\chi}_{|k-l|\geq \frac14}+(1+\nu^{\frac13}t\mathcal{M}(k) )^s\tilde{\chi}_{|l|\geq \frac14}\\
&+3(1+\nu^{\frac13}t\mathcal{M}(k-l) )^s\chi_{|k-l|\leq \frac14}(1+\nu^{\frac13}t\mathcal{M}(l) )^s\chi_{|l|\leq \frac14}\\
\leq& C(1+\nu^{\frac13}t\mathcal{M}(k-l) )^s(1+\nu^{\frac13}t\mathcal{M}(l) )^s.
\end{align*}
Then, we use Plancherel's formula and H\"older's inequality to obtain \eqref{esti: fg}.
\end{proof}
Taking the Fourier transform in $x$-variable for \eqref{linear}, we obtain
\begin{equation}\label{linear Fourier}
\left\{
\begin{aligned}
&\pa_tw^{L}_k+\nu (1+t^2k^2)w^{L}_k+ikyw^{L}_k=0,\quad k\neq 0,\\
&w^{L}_k\big|_{t=0}=w^{in}(k,y),
\end{aligned}
\right.
\end{equation}
where $w^L_k= \int_{\R} e^{-ikx} \om_{L}dx$ and $w^{in}(k,y)=\int_{\R} e^{-ikx} \om^{in}dx$.
\begin{lemma}\label{lemma:wL estimate}
Let $w^{L}_k$ be the solution of \eqref{linear Fourier} with $t\geq 0$ and $A_j(k)=\|(\pa_y,k)^jw^{in}_{k}\|_{L^2_y}$.
Then,  we have
\begin{align}
&\|(\pa_y,k)^j (e^{ik yt} w^L_k)\|_{L^2_y}\leq Ce^{-\nu t-\frac13\nu k^2t^3}A_j(k), \label{kneq0 wL2}\\
&\|(\pa_y,k)^j (e^{ik yt} w^L_k)\|_{L^\infty_y}\leq C\min\{1,|k|^{-\frac12}\}e^{-\nu t-\frac13\nu k^2t^3}A_{j+1}(k). \label{kneq0 wLinfty}
\end{align}
\end{lemma}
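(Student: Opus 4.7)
The plan is to solve \eqref{linear Fourier} in closed form by a standard conjugation that absorbs the transport term. Set $\tilde w_k(t,y):=e^{ikyt}w^L_k(t,y)$; since $\pa_t(e^{ikyt})=iky\,e^{ikyt}$, the term $iky\,w^L_k$ cancels exactly and \eqref{linear Fourier} becomes the $y$-parametrized ODE
\begin{equation*}
\pa_t \tilde w_k + \nu(1+t^2k^2)\,\tilde w_k = 0,\qquad \tilde w_k(0,y)=w^{in}(k,y),
\end{equation*}
whose unique solution is $\tilde w_k(t,y)=e^{-\nu t-\frac13\nu k^2 t^3}\,w^{in}(k,y)$.

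Because the exponential factor depends neither on $y$ nor on any of the operations in $(\pa_y,k)^j$, it commutes with this operator, so
\begin{equation*}
(\pa_y,k)^j\bigl(e^{ikyt}w^L_k\bigr)=e^{-\nu t-\frac13\nu k^2 t^3}\,(\pa_y,k)^j w^{in}(k,y).
\end{equation*}
Taking the $L^2_y$-norm and inserting the definition of $A_j(k)$ yields \eqref{kneq0 wL2} immediately, with constant $C=1$ in fact.

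For \eqref{kneq0 wLinfty} the task reduces to bounding $\|(\pa_y,k)^j w^{in}(k,\cdot)\|_{L^\infty_y}$ by $\min\{1,|k|^{-1/2}\}\,A_{j+1}(k)$. I would proceed term by term on monomials $\pa_y^a k^b w^{in}$ with $a+b=j$, using the one-dimensional embedding
\begin{equation*}
\|f\|_{L^\infty_y(-1,1)}^2\lesssim \|f\|_{L^2_y}^2+\|f\|_{L^2_y}\|\pa_y f\|_{L^2_y}.
\end{equation*}
For $|k|\le 1$ this produces the $1$-bound directly in terms of $A_{j+1}$; the boundary vanishing $w^{in}(k,\pm 1)=0$ (inherited from $\om^{in}(x,\pm 1)=0$) can be invoked through Poincar\'e's inequality to absorb any remaining low-order contribution. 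For $|k|\ge 1$ one writes $|k|^{2b}=|k|^{2b-1}\cdot|k|$ and applies weighted AM--GM to the cross term
\begin{equation*}
\|\pa_y^a w^{in}\|_{L^2}\|\pa_y^{a+1}w^{in}\|_{L^2}\le \tfrac12|k|\,\|\pa_y^a w^{in}\|_{L^2}^2+\tfrac12|k|^{-1}\|\pa_y^{a+1}w^{in}\|_{L^2}^2,
\end{equation*}
producing the desired factor $|k|^{-1}$ (equivalently $|k|^{-1/2}$ after square-rooting) that matches the right-hand side of \eqref{kneq0 wLinfty}.

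The only delicate point is the bookkeeping of the $|k|$-weights in this last step: the AM--GM split must be arranged so that after regrouping, every surviving term is genuinely a constituent of $A_{j+1}(k)^2$ (rather than of $A_j(k)^2$), and this is precisely where the extra derivative on the right-hand side of \eqref{kneq0 wLinfty} gets used. Once the explicit formula from step one is in place, this is the only non-mechanical ingredient of the argument.
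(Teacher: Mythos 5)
Your proposal is correct and follows essentially the same route as the paper: the paper likewise records the explicit formula $w^{L}_k=e^{-\nu t-\frac13\nu k^2 t^3-ikyt}w^{in}(k,y)$ and then states that \eqref{kneq0 wL2}--\eqref{kneq0 wLinfty} follow from this expression together with the interpolation inequality, which is exactly your conjugation-plus-Agmon argument with the $|k|$-weighted AM--GM bookkeeping (and Poincar\'e for the low-frequency case) spelled out.
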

A direct computation gives
\begin{equation*}
w^{L}_k=e^{-\nu t-\frac13\nu k^2 t^3-ikyt}w^{in}(k,y).
\end{equation*}
The estimates \eqref{kneq0 wL2}--\eqref{kneq0 wLinfty} can be obtained by the expression of $w^{L}_k$ and interpolation inequality.
\begin{lemma}
Assume that $\psi(k,y)=-(\pa^2_y-k^2)^{-1}w$ and $w(\pm1)=0$. There holds
\begin{align}
|k|^2\|(\pa_y,k)\psi\|_{L^2_y}\leq &C (1+t)^{-1}\|(\pa_y,k) (e^{ik yt} w)\|_{L^2_y}, \quad |k|\geq 1, \label{psiL2k>1}\\
|k|^2 \|(\pa_y,k)(\pa_y \psi+ik t \psi)\|_{L^2_y}\leq& C(1+t)^{-1}\|(\pa_y,k)^2 (e^{ik yt} w)\|_{L^2_y},\quad |k|\geq 1, \label{paypay+kpsiL2}\\
|k|^4\|\psi\|_{L^2_y}\leq& C(1+t)^{-2}\|(\pa_y,k)^2 (e^{ik yt} w)\|_{L^2_y},\quad |k|\geq 1,\label{k>1, psiL2}\\
| k|^4\|(\pa_y+i k t)\psi\|_{L^2_y}\leq & C(1+t)^{-2}\|(\pa_y,k)^3 (e^{ik yt} w)\|_{L^2_y}, \quad |k|\geq 1,\label{xi>1 xi4pay+xitpsiL2}
\end{align}
and
\begin{align}
\|(\pa_y,1)\psi\|_{L^2_y}\leq &C (1+|kt|)^{-1}\|(\pa_y,1) (e^{ik yt} w)\|_{L^2_y}, \quad |k|\leq 1, \label{psiL2k<1}\\
\|(\pa_y,1)(\pa_y\psi+ik t\psi)\|_{L^2_y}\leq& C(1+|k|t)^{-1}\|(\pa_y,1)^2 (e^{ik yt} w)\|_{L^2_y},\quad |k|\leq 1,\label{esti3 paypaypsi+xiytpsi}\\
\|\psi\|_{L^2_y}\leq& C(1+|kt|)^{-2}\|(\pa_y,1)^2 (e^{ik yt} w)\|_{L^2_y}, \quad |k|\leq 1,\label{esti4 psiL2}\\
\|(\pa_y+i k t)\psi\|_{L^2_y}\leq & C(1+| k t|)^{-2}\|(\pa_y,1)^3 (e^{ik yt} w)\|_{L^2_y}, \quad |k|\leq 1. \label{xi<1 pay+xitpsiL2}
\end{align}
\end{lemma}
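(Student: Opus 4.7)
The plan is to use the explicit Green's function $G_k(y,y')$ from Lemma \ref{operator estimate} to write $\psi(y)=-\int_{-1}^{1}G_k(y,y')w(y')\,dy'$, and then to integrate by parts in the $y'$-variable, exploiting the oscillatory shift $w=e^{-iky't}\tilde w$ together with $\tilde w(\pm 1)=0$. Using $e^{-iky't}=-(ikt)^{-1}\pa_{y'}e^{-iky't}$, each IBP supplies a factor $(ikt)^{-1}$ at the cost of one derivative passed to $\tilde w$ or to $G_k$; all boundary contributions vanish because $G_k(y,\pm 1)=0=\tilde w(\pm 1)$.

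In the regime $|k|\geq 1$, I would read off from the explicit hyperbolic expression the kernel bounds $|G_k|\lesssim|k|^{-1}e^{-|k||y-y'|}$, $|\pa_{y'}G_k|\lesssim e^{-|k||y-y'|}$, and $|\pa_y\pa_{y'}G_k|\lesssim|k|e^{-|k||y-y'|}$, which via Schur's test produce the operator norms $\|G_k\|_{L^2\to L^2}\lesssim|k|^{-2}$ and $\|\pa_{y'}G_k\|_{L^2\to L^2}\lesssim|k|^{-1}$. Inserting these into the one-IBP representation gives $|k|^3\|\psi\|\lesssim|kt|^{-1}\|(\pa_y,k)\tilde w\|$; the corresponding identity applied to $\pa_y\psi=-\int\pa_yG_k\,w\,dy'$ works the same way, except that the diagonal jump of $\pa_yG_k$ contributes a Dirac that pairs with $\tilde w$ to produce a harmless pointwise term $\tilde w(y)$. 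Patching these IBP bounds with the straightforward elliptic estimate (valid for $|kt|\lesssim 1$) yields the $(1+t)^{-1}$ decay and establishes \eqref{psiL2k>1}.

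For the higher-order estimates \eqref{paypay+kpsiL2}--\eqref{xi>1 xi4pay+xitpsiL2}, I would iterate the IBP once more and use the algebraic identity $(\pa_y+ikt)\psi=e^{-ikyt}\pa_y\tilde\psi$, which gives the elementary bound $\|(\pa_y+ikt)\psi\|\leq\|\pa_y\psi\|+|kt|\|\psi\|$. The second IBP brings in $\pa_{y'}^2G_k=k^2G_k+\delta(y'-y)$: the $k^2G_k$ contribution loops back onto $\psi$ itself via a rearrangement of the form $(1+t^{-2})\psi=\cdots$, while the Dirac contributes yet another $\tilde w(y)$. This rearrangement automatically produces the factor $(1+t^2)^{-1}\sim(1+t)^{-2}$, so no separate small-$t$ patching is needed at this order. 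For the low-frequency block $|k|\leq 1$, the kernels no longer decay exponentially in $|y-y'|$ but satisfy uniform $O(1)$ bounds in $G_k$ and $\pa_{y'}G_k$, independent of $k$; the identical IBP argument then yields a decay rate $(1+|kt|)^{-n}$ per $n$ IBPs, which matches \eqref{psiL2k<1}--\eqref{xi<1 pay+xitpsiL2}.

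The main obstacle I anticipate is purely notational: careful bookkeeping of the Dirac contributions from the diagonal jumps of $\pa_yG_k$ and $\pa_{y'}^2G_k$ under successive differentiations, and of the two regimes $|kt|\gtrsim 1$ versus $|kt|\lesssim 1$ that must be patched together. No genuinely new idea is required; the structure follows the template of Lemma \ref{lemma: om decom} above and of the analogous inviscid-damping estimates in \cite{Arbon-Bedrossian}, so the proof reduces to executing a finite list of explicit kernel bounds and Schur-test applications.
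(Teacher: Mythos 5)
Your Green's-function strategy does handle the four estimates that involve only $\psi$ and $(\pa_y,k)\psi$ — namely \eqref{psiL2k>1}, \eqref{k>1, psiL2}, \eqref{psiL2k<1} and \eqref{esti4 psiL2} — and for those it is a legitimate alternative to the paper's argument (the paper instead uses a duality/energy identity, $|kt|\lan\psi,w\ran=\lan\psi,w_1-\pa_yw\ran$ with $w_1=(\pa_y+ikt)w$, rather than explicit kernel bounds and Schur's test). The genuine gap is in your treatment of the four estimates for $(\pa_y+ikt)\psi$, i.e.\ \eqref{paypay+kpsiL2}, \eqref{xi>1 xi4pay+xitpsiL2}, \eqref{esti3 paypaypsi+xiytpsi} and \eqref{xi<1 pay+xitpsiL2}: the ``elementary bound'' $\|(\pa_y+ikt)\psi\|\leq\|\pa_y\psi\|+|kt|\|\psi\|$ provably loses one full power of $(1+|kt|)$. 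Indeed, on the Fourier side (whole-line model) one has $\widehat{\pa_y\psi}(\xi-kt)\sim -i\,\hat{\tilde w}(\xi)/(kt)$ and $|kt|\,\hat\psi(\xi-kt)\sim \hat{\tilde w}(\xi)/(kt)$ for $|kt|\gg1$, so each of $\pa_y\psi$ and $kt\,\psi$ decays only like $|kt|^{-1}$, while their combination $(\pa_y+ikt)\psi$ decays like $|kt|^{-2}$ thanks to an exact cancellation of the leading terms. Concretely, plugging your own bounds into the triangle inequality gives, e.g.\ for \eqref{xi<1 pay+xitpsiL2}, only $\|\pa_y\psi\|+|kt|\|\psi\|\lesssim (1+|kt|)^{-1}\|(\pa_y,1)^2(e^{ikyt}w)\|$ instead of the required $(1+|kt|)^{-2}$, and the same loss occurs in \eqref{paypay+kpsiL2} and \eqref{esti3 paypaypsi+xiytpsi}, where the term $|kt|\|(\pa_y,k)\psi\|$ yields no decay at all.

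To capture the cancellation you must work with $\psi_1:=(\pa_y+ikt)\psi$ as a single object. The paper's route is to observe that $-(\pa_y^2-k^2)\psi_1=w_1$ with $e^{ikyt}w_1=\pa_y(e^{ikyt}w)$, but with \emph{inhomogeneous} boundary data $\psi_1(\pm1)=\pa_y\psi(\pm1)$; it then splits $\psi_1=\psi_2+\psi_3$, where $\psi_2=-(\pa_y^2-k^2)^{-1}w_1$ is handled by the already-proved Dirichlet estimates and $\psi_3$ is the harmonic correction $\psi_3=\pa_y\psi(-1)\gamma_{-1}+\pa_y\psi(1)\gamma_1$. The real content is then the decay of the boundary values, $|\pa_y\psi(\pm1)|=|\lan w,\gamma_{\pm1}\ran|\lesssim(1+|kt|)^{-1}$ and $(1+|kt|)^{-2}$ (estimates \eqref{paypsi1} and \eqref{paypsi(1) 2}), obtained by integrating the oscillation $e^{-ikyt}$ against the smooth test functions $\gamma_{\pm1}$. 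Your proposal never engages with these boundary contributions; within your framework they would reappear as the nonvanishing boundary terms at the second and third integrations by parts (e.g.\ $\pa_{y'}G_k(y,\pm1)\,\pa_{y'}\tilde w(\pm1)\neq0$), which you have implicitly discarded. So the approach is repairable, but as written it does not prove half of the lemma.
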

\begin{proof}

We use the same method as in \cite[Lemma 3.4, Lemma 3.5]{Wei-Zhang-3} to derive \eqref{psiL2k>1}--\eqref{xi>1 xi4pay+xitpsiL2}. For the sake of completeness, we exhibit the proof of \eqref{psiL2k<1}--\eqref{xi<1 pay+xitpsiL2}, where $\|\cdot\|_{L^2}$ denotes $\|\cdot\|_{L^2_y(-1,1)}$.

Next, we go into the proof of \eqref{psiL2k<1}. Let $w_1=\pa_yw+ik tw$. As $\psi=-(\pa^2_y-k^2)^{-1} w$, we have
\begin{equation*}
\begin{aligned}
&|k t|(\|\pa_y \psi\|^2_{L^2}+|k|^2\|\psi\|^2_{L^2})=|k t||\lan \psi,w\ran|=|\lan \psi, w_1-\pa_y w\ran|\\
\leq &\|\psi\|_{L^2}\|w_1\|_{L^2}+\|\pa_y\psi\|_{L^2}\|w\|_{L^2}
\leq (\|w_1\|_{L^2}+\|w\|_{L^2})\|(\pa_y,1) \psi\|_{L^2}.
\end{aligned}
\end{equation*}
With the equality $e^{ik yt}w_1=\pa_y(e^{ik yt}w)$ and $\|\psi\|_{L^2}\leq \|\pa_y\psi\|_{L^2}$, we deduce
\begin{align}\label{kt pay,1psi L2}
\frac12|k t|\|(\pa_y,1) \psi\|_{L^2}\leq \|(\pa_y,1) (e^{ik yt}w)\|_{L^2}.
\end{align}
Moreover, due to $\|\pa_y \psi\|^2_{L^2}\leq\lan \psi,w\ran\leq \|\psi\|_{L^2}\|w\|_{L^2}$, we obtain
\begin{align}\label{pay,1psi L2}
\frac12\|(\pa_y,1) \psi\|_{L^2}\leq  \|e^{ik yt}w\|_{L^2}.
\end{align}
The estimates \eqref{kt pay,1psi L2} and \eqref{pay,1psi L2} imply \eqref{psiL2k<1}.

Define
$$ \psi_1=\pa_y \psi+ik t\psi, \quad \psi_2=-(\pa^2_y-k^2)^{-1}w_1,\quad \psi_3=\psi_1-\psi_2.$$
Due to $-(\pa^2_y-k^2)\psi=w$, we have
$$ -(\pa^2_y-k^2)\psi_1=w_1, \quad (\pa^2_y-k^2) \psi_3=0.$$
Thanks to the definition of $\psi_1$, there holds
\begin{align}\label{esti: pay+ixiytpsiL2}
\|(\pa_y,1)(\pa_y\psi+ik t\psi)\|_{L^2}=\|(\pa_y,1)\psi_1\|_{L^2}\leq \|(\pa_y,1)\psi_2\|_{L^2}+\|(\pa_y,1)\psi_3\|_{L^2}.
\end{align}

By the definition of $\psi_2$, $e^{ikyt}w_1=\pa_y(e^{ikty} w)$, we use \eqref{psiL2k<1} to obtain
\begin{equation}\label{esti: paypsi2L2}
\begin{aligned}
 \|(\pa_y,1)\psi_2\|_{L^2}\lesssim &  (1+|k t|)^{-1}\|(\pa_y,1)(e^{ik yt} w_1)\|_{L^2}\\
 =&(1+|k t|)^{-1}\|(\pa_y,1)\pa_y( e^{ik yt} w)\|_{L^2}.
\end{aligned}
\end{equation}

From the equation of $\psi_3$, we have
\begin{align*}
\psi_3=\psi_3(-1)\gamma_{-1}+\psi_3(1)\gamma_{1},
\end{align*}
where
$\gamma_j(y)=\frac{\sinh (k(y+j))}{j\sinh 2k}$ is the solution of $(\pa^2_y-k^2)\gamma_j=0,$ $\gamma_j(j)=1$, $\gamma_j(-j)=0$ with $j=\pm1$.
Thanks to $|\gamma'_j(y)|=|\frac{k\cosh(k(y+j))}{j\sinh 2k}|\leq C$ for $|k|\leq 1$, $y\in[-1,1]$ and $j\in\{\pm 1\}$, we arrive at
\begin{equation}\label{gammaL2}
\begin{aligned}
\frac12\|(\pa_y, 1) \gamma_j\|^2_{L^2}\leq& \|\pa_y \gamma_j\|^2_{L^2}+|k|^2\|\gamma_j\|^2_{L^2}\\
=&-\lan \gamma_j, (\pa^2_y-k^2)\gamma_j\ran+\gamma'_j \gamma_j|^{1}_{-1}=|\gamma'_j \gamma_j(j)|\leq C,
\end{aligned}
\end{equation}
which implies
\begin{equation}\label{esti: paypsi3L2 mid}
\begin{aligned}
\|(\pa_y, 1)\psi_3\|_{L^2}\leq  &|\psi_3(-1)|\|(\pa_y, 1)\gamma_{-1}\|_{L^2}+|\psi_3(1)|\|(\pa_y, 1)\gamma_{1}\|_{L^2}\\
\lesssim &(|\pa_y\psi(-1)|+|\pa_y\psi(1)|),
\end{aligned}
\end{equation}
where we used that $\psi_3(\pm 1)=\psi_1(\pm1)=\pa_y\psi(\pm 1)$.
Using the fact that
\begin{align}\label{pay psi(1)}
-\lan w^L_k,\gamma_1\ran=\lan (\pa^2_y-k^2)\psi, \gamma_1\ran=\lan \psi,(\pa^2_y-k^2)\gamma_1\ran+(\pa_y \psi\gamma_1-\psi \gamma'_1)|^{1}_{-1}=\pa_y \psi(1),
\end{align}
 and $e^{ik yt}w_1=\pa_y(e^{ik yt}w)$, \eqref{gammaL2}, we infer that
\begin{equation}\label{kt pay psi(1)}
\begin{aligned}
|k t\pa_y \psi(1)|=&|k t\lan w,\gamma_1\ran|=|\lan w_1-\pa_yw,\gamma_1\ran|= \big|\lan w_1,\gamma_1\ran+\lan w,\gamma'_1\ran-w\gamma_1|^{1}_{-1}\big|\\
\leq&\|w_1\|_{L^2}\|\gamma_1\|_{L^2}+\|w\|_{L^2}\|\gamma'_1\|_{L^2}\\
\leq&C(\|w_1\|_{L^2}+\|w\|_{L^2})\leq C\|(\pa_y,1)(e^{ik yt}w)\|_{L^2}.
\end{aligned}
\end{equation}
On the other hand, by \eqref{pay psi(1)}, we obtain
\begin{align}\label{paypsicontrol byomL2}
|\pa_y \psi(1)|=|\lan w,\gamma_1\ran|\leq \|w\|_{L^2}\|\gamma_1\|_{L^2}\leq \|e^{ikty}w\|_{L^2}.
\end{align}
The estimate \eqref{kt pay psi(1)} together with \eqref{paypsicontrol byomL2} implies
\begin{align}\label{paypsi1}
|\pa_y \psi(1)|\leq C(1+|k|t)^{-1}\|(\pa_y,1)(e^{ik yt}w)\|_{L^2}.
\end{align}
Similarly, we have
\begin{align}\label{paypsi-1}
|\pa_y\psi(-1)|\leq C(1+|k|t)^{-1}\|(\pa_y,1)(e^{ik yt}w)\|_{L^2}.
\end{align}
Inserting \eqref{paypsi1} and \eqref{paypsi-1} into \eqref{esti: paypsi3L2 mid}, we arrive at
\begin{align}\label{esti: paypsi3L2}
\|(\pa_y, 1)\psi_3\|_{L^2}\leq C(1+|k|t)^{-1}\|(\pa_y,1)(e^{ik yt}w)\|_{L^2}.
\end{align}
It follows from \eqref{esti: paypsi2L2} and \eqref{esti: paypsi3L2} that
\begin{equation}
\begin{aligned}\label{psi1L2}
\|(\pa_y, 1)\psi_1\|_{L^2}\leq &C \|(\pa_y, 1)\psi_2\|_{L^2}+\|(\pa_y, 1)\psi_3\|_{L^2}\\
\leq &C(1+|k|t)^{-1}\|(\pa_y,1)\pa_y(e^{ik yt}w)\|_{L^2}.
\end{aligned}
\end{equation}

For \eqref{esti4 psiL2}, by $\psi_1=\pa_y \psi+ik t\psi$, we have
\begin{align}\label{xitpsiL2}
| k t|\|\psi\|_{L^2}\leq \|\pa_y \psi\|_{L^2}+\|\psi_1\|_{L^2}.
\end{align}
Using \eqref{psiL2k<1} and \eqref{psi1L2}, we get
\begin{equation}\label{esti: paypsi+psi1L2}
\begin{aligned}
\|\pa_y \psi\|_{L^2}+\|\psi_1\|_{L^2}\leq &C(1+| k|t)^{-1}(\|(\pa_y,1)(e^{i k yt} w)\|_{L^2}+\|(\pa_y,1)^2(e^{i k yt} w)\|_{L^2})\\
\leq& C(1+| k|t)^{-1}\|(\pa_y,1)^2(e^{i k yt} w)\|_{L^2}.
\end{aligned}
\end{equation}
Inserting \eqref{esti: paypsi+psi1L2} into \eqref{xitpsiL2} and then combining it with \eqref{psiL2k<1}, we arrive at
\begin{align*}
\|\psi\|_{L^2}\leq C(1+| k|t)^{-2}\|(\pa_y,1)^2(e^{i k yt} w)\|_{L^2}.
\end{align*}

For \eqref{xi<1 pay+xitpsiL2}, due to
\begin{align*}
\|(\pa_y+i k t)\psi\|_{L^2}=\|\psi_1\|_{L^2}\leq \|\psi_2\|_{L^2}+\|\psi_3\|_{L^2},
\end{align*}
we are left to bound $\psi_2$ and $\psi_3$.
We use \eqref{esti4 psiL2} and $e^{ikyt}w_1=\pa_y(e^{ikty} w)$ to get
\begin{align}\label{esti: psi2}
\|\psi_2\|_{L^2}\leq C(1+| k|t)^{-2}\|(\pa_y,1)^2(e^{i k yt}w_1)\|_{L^2}
\leq C(1+| k|t)^{-2}\|(\pa_y,1)^3(e^{i k yt}w)\|_{L^2}.
\end{align}
For $\psi_3$, thanks to $w=-(\pa^2_y-k^2)\psi$, we have
\begin{equation}\label{esti: kt2paypsi1}
\begin{aligned}
| k t|^2|\pa_y\psi(1)|=&| k t|^2|\lan w,\gamma_1\ran|=| k t|^2|\lan e^{i k yt}w \gamma_1, e^{i k yt}\ran|=|\lan e^{i k yt}w\gamma_1, \pa^2_y(e^{i k yt})\ran|\\
=&\left|\lan \pa^2_y(e^{i k yt}w\gamma_1),e^{i k yt}\ran-e^{i k yt}\pa_y(e^{i k yt}w\gamma_1)|^{1}_{-1}\right|\\
\leq &\|\pa^2_y(e^{i k yt}w\gamma_1)\|_{L^1}+\|\pa_y(e^{i k yt}w \gamma_1)\|_{L^\infty}.
\end{aligned}
\end{equation}
For any $y\in[-1,1]$, thanks to  $\pa_y(e^{i k yt} w\gamma_1)|_{y=-1}=0$, we have
\begin{align*}
|\pa_y(e^{i k yt}w\gamma_1)(y)|= \Big|\int^{y}_{-1}\pa^2_{y}(e^{i k yt}w \gamma_1) dy\Big|\leq \|\pa^2_{y}(e^{i k yt}w \gamma_1)\|_{L^1}.
\end{align*}
Inserting it into \eqref{esti: kt2paypsi1}, we obtain
\begin{align*}
| k t|^2|\pa_y\psi(1)|
\lesssim \|\pa^2_y(e^{i k yt}w\gamma_1)\|_{L^1}
\lesssim \sum_{j=0}^{2}\|\pa^j_y(e^{i k yt}w)\|_{L^2}\|\pa^{2-j}_y\gamma_1\|_{L^2}\lesssim \|(\pa_y,1)^2(e^{i k yt}w)\|_{L^2}.
\end{align*}
This together with \eqref{paypsicontrol byomL2} implies
\begin{align}\label{paypsi(1) 2}
|\pa_y\psi(1)|\leq C(1+| k|t)^{-2}\|(\pa_y,1)^2(e^{i k yt}w)\|_{L^2}.
\end{align}
Similarly, we derive that
\begin{align}\label{paypsi(-1) 2}
|\pa_y\psi(-1)|\leq C(1+| k|t)^{-2}\|(\pa_y,1)^2(e^{i k yt}w)\|_{L^2}.
\end{align}
Inserting \eqref{paypsi(1) 2} and \eqref{paypsi(-1) 2} into \eqref{esti: paypsi3L2 mid}, we arrive at
\begin{align*}
\|\psi_3\|_{L^2}\leq C(1+| k|t)^{-2}\|(\pa_y,1)^2(e^{i k yt}w)\|_{L^2}.
\end{align*}
This inequality together with \eqref{esti: psi2} yields
\begin{align*}
\|(\pa_y \psi+i k yt\psi)\|_{L^2}\leq C(1+| k|t)^{-2}\|(\pa_y,1)^2(e^{i k yt}w)\|_{L^2}.
\end{align*}
\end{proof}

\section*{Declarations}
\begin{itemize}
  \item \textbf{Acknowledgements} The authors thank to Prof. Dongyi Wei for many valuable discussions. This project was supported by the National Key Research and Development Program of China (No: 2022YFA1005700). Q. Chen was partially supported by National Natural Science Foundation of China (No: 12471149). Z. Li was partially supported by the Postdoctoral Fellowship Program of CPSF (No: GZC20240123). C. Miao was partially supported by National Natural Science Foundation of China (No: 12371095).
  \item \textbf{Conflict of interest} The authors declare that they have no conflict of interest.
  \item \textbf{Data Availability} Data sharing is not applicable to this article as no datasets were generated or analyzed during
the current study.

\end{itemize}

\end{document}